\DeclareMathAlphabet{\mathpzc}{OT1}{pzc}{m}{it}
\newcommand{\ui}{[0,1]}
\newcommand{\tXh}{\widetilde{X}_{H}}
\newcommand{\hX}{\widehat{X}}
\newcommand{\hx}{\widehat{x}}
\newcommand{\txh}{\tilde{x}_{H}}
\newcommand{\tX}{\widetilde{X}}
\newcommand{\pxxo}{P(X,x_0)}
\newcommand{\pionex}{\pi_{1}(X,x_0)}
\newcommand{\bpp}{b_{0}^{+}}
\newcommand{\lb}{\langle}
\newcommand{\rb}{\rangle}
\newcommand{\mcc}{\mathcal{C}}
\newcommand{\mci}{\mathcal{I}}
\newcommand{\scrl}{\mathscr{L}}
\newcommand{\bba}{\mathbb{A}}
\newcommand{\bbw}{\mathbb{W}}
\newcommand{\bbd}{\mathbb{D}}
\newcommand{\bbh}{\mathbb{H}}
\newcommand{\bbn}{\mathbb{N}}
\newcommand{\bbq}{\mathbb{Q}}
\newcommand{\bbr}{\mathbb{R}}
\newcommand{\bbt}{\mathbb{T}}
\newcommand{\bbz}{\mathbb{Z}}
\newcommand{\bbhp}{\mathbb{H}^{+}}
\DeclareMathOperator{\supp}{supp}
\DeclareMathOperator{\cl}{Cl}
\DeclareMathOperator{\sct}{Sc}
\newcommand{\pionet}{\pi_1(\bbt,t_0)}
\newtheorem{theorem}{Theorem}[section]
\newtheorem{lemma}[theorem]{Lemma}
\newtheorem{proposition}[theorem]{Proposition}
\newtheorem{corollary}[theorem]{Corollary}
\theoremstyle{definition}\newtheorem{definition}[theorem]{Definition}
\newtheorem{example}[theorem]{Example}
\newtheorem{remark}[theorem]{Remark}
\begin{document}
\title{Dense products in fundamental groupoids}
\author{Jeremy Brazas}
\date{\today}

\begin{abstract}
Infinitary operations, such as products indexed by countably infinite linear orders, arise naturally in the context of fundamental groups and groupoids. Despite the fact that the usual binary operation of the fundamental group determines the operation of the fundamental groupoid, we show that, for a locally path-connected metric space, the well-definedness of countable dense products in the fundamental group need not imply the well-definedness of countable dense products in the fundamental groupoid. Additionally, we show the fundamental groupoid $\Pi_1(X)$ has well-defined dense products if and only if $X$ admits a generalized universal covering space.
\end{abstract}

\maketitle

\section{Introduction}

Groups and groupoids with infinite product operations arise naturally in fundamental groups $\pi_1(X,x)$ and fundamental groupoids $\Pi_1(X)$ of a space $X$ with non-trivial local homotopy. In particular, since the components of an open set in $\ui$ may have any countable linear order type, including the dense order type of $\bbq$, it is possible to form transfinite path-concatenations $\prod_{j\in \scrl}\alpha_j$ indexed by any countable linear order $\scrl$ and hence a \textit{transfinite $\Pi_1$-product} $\prod_{j\in\scrl}[\alpha_j]:=\left[\prod_{j\in \scrl}\alpha_j\right]$ in $\Pi_1(X)$ when the concatenation is defined and continuous. If each $\alpha_j$ is a loop based at $x\in X$, the result is a \textit{transfinite $\pi_1$-product} in $\pi_1(X,x)$. However, these partial infinitary operations on homotopy classes are only well-defined if path-homotopic factors result in path-homotopic concatenations; this is not guaranteed even for some subspaces of $\bbr^3$. 

In \cite{BrazScattered}, it is shown that, for both the fundamental group and groupoid, the well-definedness of the infinitary operation $([\alpha_j])_{j\in\scrl}\mapsto \prod_{j\in\scrl}[\alpha_j]$ for all countable \textit{scattered} linear orders $\scrl$ is equivalent to the well-known \textit{homotopically Hausdorff} property (see Definition \ref{homhausdorffdef}). This paper is a study of the two homotopy invariant properties: well-definedness of transfinite $\pi_1$-products and well-definedness of transfinite $\Pi_1$-products over arbitrary countable linear orders (see Definitions \ref{tproddef} and \ref{tppdef}). As noted in \cite{BrazScattered}, it remains an open question if the well-definedness of scattered $\pi_1$-products implies the well-definedness of transfinite $\pi_1$-products.

Within the extensive literature on homotopy groups of locally non-trivial spaces, the use of infinite products is ubiquitous. This includes the algebra of the Hawaiian earring group \cite{CChegroup,deSmit,Edafreesigmaproducts,EK99subgroups,Zastrow02subgroup}, fundamental groups of one-dimensional spaces \cite{ADTW,CConedim,EK98onedimefund,EdaSpatial,FZ14menger}, and fundamental groups of planar spaces \cite{Edafreessigmaplane,FZ05}. The well-definedness of transfinite $\pi_1$-products plays an implicit role in Katsuya Eda's homotopy classification of one-dimensional Peano continua \cite{Edaonedimhomtype} and related ``automatic continuity" results for fundamental groups of one-dimensional and planar Peano continua \cite{CK,Edafreessigmaplane,Kentplanarhomom}. The well-definedness of transfinite $\Pi_1$-products as a property in its own right was first formalized in \cite[Section 7]{BFTestMap} as an intermediate property useful for proving partial converses of known implications.

Our first main result detects a fundamental difference between the well-definedness of transfinite $\pi_1$-products and transfinite $\Pi_1$-products. This is somewhat surprising since properties of groups typically pass to their groupoid counterparts and since the two are equivalent when restricted to products indexed over scattered orders.

\begin{theorem}\label{counterexamplespace}
There exists a locally path-connected metric space $X$ having well-defined transfinite $\pi_1$-products but which does not have well-defined transfinite $\Pi_1$-products.
\end{theorem}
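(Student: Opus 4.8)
The plan is to construct $X$ by attaching harmonic-archipelago-type $2$-cells (``mountains'') along a family of arcs that drift \emph{densely} between two points, rather than along loops concentrated at a single wedge point. Concretely, I would take a base arc $J$ with endpoints $a,c$, a nested sequence of arcs $\sigma_0,\sigma_1,\dots$ from $a$ to $c$ converging uniformly to a limit arc $\sigma_\infty$, and for each $n$ a mountain disk $M_n$ with $\partial M_n=\sigma_n\overline{\sigma_{n+1}}$, so that $\sigma_n\simeq\sigma_{n+1}$ rel $\{a,c\}$. The essential tuning is that each $M_n$ has arbitrarily small \emph{feet} (the slivers where it meets the arcs) but is ``tall,'' so that the homotopy $\sigma_n\simeq\sigma_{n+1}$ cannot be performed inside any fixed small neighborhood of $\sigma_\infty$; the feet are distributed along a dense subset of the parameter interval, and the diameters of the $M_n$ shrink fast enough that $X$ is a compact, locally path-connected metric space. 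This is exactly the archipelago mechanism, realized by arcs between two distinct points and spread out densely. The basepoint $x_0$ is placed at an end so that no mountain accumulates at it.

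For the negative direction I would exhibit a failure of well-definedness of transfinite $\Pi_1$-products directly from Definition~\ref{tppdef}, without invoking the generalized-covering characterization (which is not available at this point in the paper). First, $\sigma_0$ and $\sigma_\infty$ are \emph{not} path-homotopic rel $\{a,c\}$; this archipelago nontriviality I would establish by a compactness-and-height argument as for $\ha$, where a null-homotopy of $\sigma_0\overline{\sigma_\infty}$ would be forced to traverse infinitely many mountains of bounded height. On the other hand, each small sub-arc of $\sigma_\infty$ is path-homotopic, through a single nearby tall mountain, to the corresponding sub-arc of $\sigma_0$, so $\sigma_0$ and $\sigma_\infty$ admit dense factorizations indexed by a common countable dense order $\scrl$ with termwise-homotopic factors. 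Thus termwise-homotopic dense families of paths yield non-homotopic products, and $X$ fails to have well-defined transfinite $\Pi_1$-products.

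The positive direction is the main obstacle, and it is where the asymmetry between loops and paths must be exploited. A transfinite $\pi_1$-product $\prod_{j\in\scrl}\alpha_j$ is a \emph{continuous loop} at $x_0$, so for every $\varepsilon>0$ all but finitely many factors have diameter $<\varepsilon$, and, being loops through $x_0$, each such factor is confined to the $\varepsilon$-ball about $x_0$. A small loop near a mountain meets it only near its small foot, where the disk is locally contractible; hence the mountain structure is \emph{invisible} to based loops except through length-reducing local cancellations, in sharp contrast to the arc-homotopies $\sigma_n\simeq\sigma_{n+1}$, which must sweep across an entire tall disk. I would make this precise by showing that the based-product operation factors through a one-dimensional reduced-word calculus governing loops at $x_0$; equivalently, that every dense based product is path-homotopic, after deleting a locally trivial dense tail, to a \emph{scattered} product, which is well-defined by \cite{BrazScattered} via the homotopically Hausdorff property (Definition~\ref{homhausdorffdef}).

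The hardest step, requiring care specific to $X$, is this reduction of a dense based product to a scattered one. It does \emph{not} follow from homotopically Hausdorff alone: as the open question recalled in the introduction shows, a dense product of individually null, small loops need not be null in general. The argument must instead use the geometry of $X$ directly — that the mountains have small feet, so local contractibility of each disk propagates to a genuine transfinite null-homotopy of the dense tail of small factors. I would organize this as a transfinite induction on the order type of $\scrl$, contracting the small factors scale by scale and invoking the homotopically Hausdorff property of $X$ (which I would verify separately, precisely because the shrinking drifting mountains carry no persistent nontrivial class at any single point) to pass to the limit. Assembling these null-homotopies, and confirming that the surviving scattered product is independent of the chosen homotopic representatives, is the crux of the proof.
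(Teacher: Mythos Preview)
Your approach is genuinely different from the paper's, and the positive direction contains a real gap.

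The paper does not build $X$ by a direct archipelago-type construction. Instead it works algebraically: it defines a normal subgroup $N_0\trianglelefteq\pi_1(\bbw,w_0)$ consisting of those classes whose ``winding support'' over the intervals $I\in\mci(\mcc)$ is a scattered suborder, and proves two tailored facts about it --- that $N_0$ contains the commutator subgroup (Proposition~\ref{basicsubgroupprop}) and that $N_0$ is $(C,c_{\infty})$-closed (Proposition~\ref{cinfclosed}). By Proposition~\ref{transfiniteproductprop}, these two facts together force $N_0$ to be $(P,p_{\tau})$-closed. The space $X$ is then $\bbw$ with $2$-cells attached along generators of $N_0$, and Lemmas~\ref{cellsandclosurelemma}--\ref{twoclosureslemma} transfer these closure properties to the trivial subgroup of $\pi_1(X)$. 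The entire positive direction is thus an algebraic one-liner once $N_0$ is set up: the point is that $\pi_1(X)\cong\pi_1(\bbw)/N_0$ is \emph{abelian}, which is exactly what collapses the distinction between $(C,c_{\infty})$-closed and $(P,p_{\tau})$-closed.

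Your construction is essentially the space $\bbw\bba$ of Section~\ref{sectioncoveringspaces} (with perhaps some extra intermediate arcs), and your negative direction is fine. The gap is the positive direction. Your proposed reduction of a dense based product to a scattered one via ``transfinite induction, contracting small factors scale by scale and invoking homotopically Hausdorff to pass to the limit'' is precisely the open question recalled in the introduction, specialized to your $X$. You acknowledge that it does not follow from homotopically Hausdorff alone, but the substitute you offer --- that local contractibility of each disk near its foot ``propagates to a genuine transfinite null-homotopy of the dense tail'' --- does not address the actual obstruction: while the loop factors $\alpha_j,\beta_j$ of two termwise-homotopic dense $\pi_1$-products may themselves be small, the \emph{homotopies} $\alpha_j\simeq\beta_j$ can be forced over tall mountains, and you give no mechanism for assembling infinitely many such large homotopies into a continuous homotopy of the full products. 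In the paper's language, your task amounts to showing that $\lb\lb W\rb\rb\trianglelefteq\pi_1(\bbw,w_0)$ is $(P,p_{\tau})$-closed; the paper does not establish this and deliberately passes to the strictly larger $N_0$, whose containment of the commutator subgroup is the device that makes the argument go through. Absent an analogue of that device, your sketch does not close.
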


Generalizations of covering space theory have proved useful for studying fundamental groups with non-trivial transfinite products and identifying connections with geometric and topological group theory (e.g. See \cite{BrazOpenSub,CHPgeometrylifting,FZ013caley,FZ14menger}). In \cite{FZ07}, Fischer and Zastrow set the foundation for a study of generalized universal covering maps based only on lifting properties. According to \cite{Brazcat}, this approach to generalized covering spaces is, for metric spaces, categorically, the most robust possible generalization of covering theory based on unique lifting, which gives a classification by a lattice of $\pi_1$-subgroups. Conditions sufficient for the existence of a generalized universal covering include $\pi_1$-shape injectivity \cite{FZ07} and the homotpically path-Hausdorff property \cite{FRVZ11}; we utilize the necessary and sufficient characterization for metric spaces in \cite{BFTestMap}.

The second main result of this paper proves that the existence of a generalized universal covering map is equivalent to the well-definedness of transfinite $\Pi_1$-products for any metric space.

\begin{theorem}\label{mainthm1}
A path-connected metrizable space $X$ admits a generalized universal covering in the sense of \cite{FZ07} if and only if $X$ has well-defined transfinite $\Pi_1$-products.
\end{theorem}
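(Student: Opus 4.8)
The plan is to work with the standard endpoint construction and to route both implications through the necessary-and-sufficient criterion for generalized universal coverings established in \cite{BFTestMap}. Let $\tX=\{[\alpha]:\alpha\in\pxxo\}$ denote the set of path-homotopy classes of paths starting at $x_0$, equipped with the whisker (``standard'') topology, and let $p:\tX\to X$, $p([\alpha])=\alpha(1)$, be the endpoint projection with basepoint $\txo=[c_{x_0}]$. Two facts will be used for free throughout: $p$ is a continuous open surjection, and for every path $\gamma$ in $X$ the assignment $t\mapsto[\gamma|_{[0,t]}]$ is a continuous lift of $\gamma$ (the \emph{standard lift}). Moreover, by the uniqueness of generalized universal coverings up to equivalence \cite{FZ07,Brazcat}, $X$ admits a generalized universal covering if and only if this particular map $p$ is one; hence it suffices to characterize when $p$ is a generalized universal covering.

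For the forward implication I would assume $p$ is a generalized universal covering, so that $\tX$ is Hausdorff. To test well-definedness, let $A=\prod_{j\in\scrl}\alpha_j$ and $B=\prod_{j\in\scrl}\beta_j$ be defined and continuous transfinite concatenations with $\alpha_j\simeq\beta_j$ rel endpoints for every $j$. After a reparametrization rel endpoints I would arrange that $\alpha_j$ and $\beta_j$ occupy a common subinterval $I_j\subseteq[0,1]$, so that $A$ and $B$ agree at every vertex and on the closed ``limit set'' $C=[0,1]\setminus\bigcup_j\operatorname{int}(I_j)$. Passing to the standard lifts $\widetilde{A},\widetilde{B}$ from $\txo$, I would analyze the equalizer $D=\{t:\widetilde{A}(t)=\widetilde{B}(t)\}$, which is closed since $\tX$ is Hausdorff. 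The successor step is pure multiplication in $\pionex$: if the left endpoint $a_j$ of $I_j$ lies in $D$, then at the right endpoint $b_j$ one has $[A|_{[0,b_j]}]=[A|_{[0,a_j]}][\alpha_j]=[B|_{[0,a_j]}][\beta_j]=[B|_{[0,b_j]}]$, so $b_j\in D$. The limit step combines continuity of the standard lifts with the Hausdorff property: if $t^{*}\in C$ is approached from the left by vertices in $D$, then $\widetilde{A}(t^{*})=\widetilde{B}(t^{*})$ by uniqueness of limits. A standard supremum/connectedness argument over the countable order then forces $D$ to contain every vertex and every point of $C$, hence $1\in D$; thus $[A]=[B]$ and $A\simeq B$, proving that transfinite $\Pi_1$-products are well-defined.

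The reverse implication is the substantive direction. Assuming transfinite $\Pi_1$-products are well-defined, I would verify the criterion of \cite{BFTestMap} that makes $p$ a generalized universal covering. Continuity of lifts is not the obstruction: for any path-connected, locally path-connected $Y$ and map $f:(Y,y_0)\to(X,x_0)$ inducing the trivial map on $\pi_1$, the formula $\widetilde{f}(y)=[f\circ\lambda_y]$ (for $\lambda_y$ a path from $y_0$ to $y$) already defines a continuous lift, using local path-connectedness of $Y$ and multiplication of whisker classes. What transfinite products supply is the missing \emph{uniqueness} of path lifts. Given a second continuous lift $\ell$ of a path $\gamma$ agreeing with the standard lift at $0$, the discrepancy between $\ell$ and $t\mapsto[\gamma|_{[0,t]}]$ is governed, as $t$ crosses a dense set of break points, by an accumulating concatenation of small correction loops; homotopical Hausdorffness (which follows from the scattered subcase via \cite{BrazScattered}) trivializes each correction in the limit, and well-definedness of the resulting transfinite product forces the total accumulated correction to be null, so $\ell$ coincides with the standard lift. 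Together with the simple connectivity of $\tX$ (lift a null-homotopy using the continuity above), these verify the characterization and produce the generalized universal covering.

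The main obstacle I anticipate lies in the analysis underlying the reverse direction over \emph{dense}, non-scattered linear orders: since such orders are not well-ordered, neither ordinary nor transfinite induction is available to propagate the lifting correction, and one must instead translate the continuity-of-lift condition directly into convergence in the whisker topology and recognize the obstruction as precisely a transfinite $\Pi_1$-product of the kind controlled by the hypothesis. A secondary technical point, to be treated with care under metrizability but without assuming local path-connectedness of $X$, is the verification that $\tX$ is locally path-connected and Hausdorff; here the scattered subcase isolates the homotopically Hausdorff ($T_1$) content, while the genuinely dense products are exactly what upgrade this to full unique lifting, mirroring the separation exhibited in Theorem \ref{counterexamplespace}.
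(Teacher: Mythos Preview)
Your reverse implication is not a proof but a wish: you correctly identify that the difficulty lies in non-scattered orders, and then you assert that ``well-definedness of the resulting transfinite product forces the total accumulated correction to be null'' without ever producing the transfinite product. Concretely, from a hypothetical failure of unique path lifting you never explain how to manufacture a closed nowhere dense set $A\subseteq\ui$ and paths $\alpha,\beta$ satisfying the hypotheses of Definition~\ref{tppdef} but with $[\alpha\cdot\beta^{-}]\neq 1$. This is the entire content of the direction, and nothing in your outline supplies it.

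Your forward implication also has a gap. The ``standard supremum/connectedness argument'' does not close: with $D$ closed and the successor step available, the supremum $s$ of $\{t\in C:[0,t]\cap C\subseteq D\}$ can very well be a two-sided limit point of $C$, and neither closedness of $D$ nor the successor/predecessor steps lets you push past $s$ to the right. (Also, Hausdorffness of $\tX$ is not the right hypothesis; unique path lifting is.) A correct direct argument does exist, but it is different from yours: define $\ell(t)=[A|_{[0,t]}]$ for $t\in C$ and $\ell(t)=[A|_{[0,a_j]}\cdot\beta_j|_{[a_j,t]}]$ for $t\in I_j=(a_j,b_j)$, check that $\ell$ is a \emph{continuous} lift of $B$ from $\txo$, and invoke unique path lifting to conclude $\ell=\tilde{B}$, hence $[A]=\ell(1)=[B]$.

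The paper proceeds along an entirely different route. It never argues directly with lifts in $\tX$; instead it uses the test-map/closure-operator framework and reduces Theorem~\ref{mainthm1} to the $H=1$ case of Theorem~\ref{mainthm1general}, which in turn follows from Theorem~\ref{mainthm}: for normal $H$, the subgroup $H$ is $(D,d_{\infty})$-closed if and only if it is $(W,w_{\infty})$-closed. The forward half of Theorem~\ref{mainthm} is imported from \cite{BFTestMap}. The substantive reverse half is proved by constructing an explicit continuous map $f:\bbw\to\bbd$ using the ternary Cantor function $f_{\mcc}$ on the base arc and sending each semicircle $C_I$ (with $f_{\mcc}(\overline{I})=\{\tfrac{2j-1}{2^n}\}$) onto the arc-combination $\ell_{n+1,2j-1}^{-}\cdot\ell_{n,j}\cdot\ell_{n+1,2j}^{-}$. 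One then checks that each $f_{\#}(w_I)$ is conjugate to an element of $D$ (so $(g\circ f)_{\#}(W)\leq H$ whenever $g_{\#}(D)\leq H$ and $H$ is normal) and, using $\pi_1$-shape injectivity of $\bbd$, that $f_{\#}(w_{\infty})=d_{\infty}$. This Cantor-map construction is precisely the missing idea in your reverse direction: it converts a $\bbd$-test (equivalently, a failure of unique path lifting) into a $\bbw$-test (a genuinely dense $\Pi_1$-product), and there is no hint of it in your outline.
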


It is evident from Theorem \ref{mainthm1} that the space used to prove Theorem \ref{counterexamplespace} is a refinement of the example in \cite{VZ13}, i.e. a homotopically Hausdorff space without a generalized universal covering space. Together, Theorems \ref{counterexamplespace} and \ref{mainthm1} illustrate that the algebraic structure of fundamental groups alone are insufficient for characterizing the existence of generalized covering spaces; one must employ the fundamental groupoid.

We prove our results using the closure operators introduced in \cite{BFTestMap}. This approach provides convenient theoretical tools for effectively characterizing and comparing many properties of fundamental group(oid)s. While the absolute (i.e. trivial subgroup) case is of primary interest, the generality of the closure framework allows us to prove a subgroup-relative version of Theorem \ref{mainthm1} (See Theorem \ref{mainthm1general}). Such subgroup-relative results are particularly useful for distinguishing properties, e.g. in the proof of Theorem \ref{counterexamplespace}.

The remainder of this paper is structured as follows: In Section \ref{prelim}, we settle notation and recall relevant background content on paths and linear orders. In Section \ref{closuresection}, we review closure pairs and operators from \cite{BFTestMap}, focusing on the Hawaiian earring as a test space. In Section \ref{sectionrelativecw}, we consider relative CW-complexes and the preservation of closure properties after the attachment of 2-cells. In Section \ref{sectiongroupoids}, we prove Theorem \ref{counterexamplespace} by constructing a special normal subgroup and applying the results of Section \ref{sectionrelativecw}. In Section \ref{sectioncoveringspaces}, we briefly review generalized coverings spaces and use the ternary Cantor map and the dyadic arc-space from \cite{BFTestMap} to prove Theorem \ref{mainthm1general}, the equivalence of the well-definedness of $\Pi_1$-products relative to a normal subgroup $N\trianglelefteq \pionex$ and the existence of a generalized covering space corresponding to $N$. We also show these two notions are distinct for non-normal subgroups (See Example \ref{examplenonnormal}).

\section{preliminaries and notation}\label{prelim}

Most of the notation in this paper agrees with that in \cite{BFTestMap}. Throughout this paper, $X$ will denote a path-connected topological space and $x_0\in X$ will be a basepoint. The homomorphism induced on $\pi_1$ by a based map $f:(X,x)\to (Y,y)$ is denoted $f_{\#}:\pi_1(X,x)\to\pi_1(Y,y)$.

A \textit{path} is a continuous function $\alpha:[0,1]\to X$, which we call a \textit{loop} based at $x\in X$ if $\alpha(0)=\alpha(1)=x$. If $[a,b],[c,d]\subseteq \ui$ and $\alpha:[a,b]\to X$, $\beta:[c,d]\to X$ are maps, we write $\alpha\equiv\beta$ if $\alpha=\beta\circ \phi$ for some increasing homeomorphism $\phi: [a,b]\to [c,d]$; if $\phi$ is linear and if it does not create confusion, we will identify $\alpha$ and $\beta$. Under this identification, the restriction $\alpha|_{[a,b]}$ of a path $\alpha:\ui\to X$ is a path itself with a path-homotopy class $[\alpha|_{[a,b]}]$.

If $\alpha:\ui\to X$ is a path, then $\alpha^{-}(t)=\alpha(1-t)$ is the reverse path. If $\alpha_1,\alpha_2,\dots,\alpha_n$ is a sequence of paths such that $\alpha_{j}(1)=\alpha_{j+1}(0)$ for each $j$, then $\prod_{j=1}^{n}\alpha_j=\alpha_1\cdot \alpha_2\cdot\;\cdots\;\cdot \alpha_n$ is the path defined as $\alpha_j$ on $\left[\frac{j-1}{n},\frac{j}{n}\right]$. A sequence $\alpha_1,\alpha_2,\alpha_3,\dots$ of paths in $X$ is a \textit{null sequence} if $\alpha_n(1)=\alpha_{n+1}(0)$ for all $n\in\bbn$ and there is a point $x\in X$ such that every neighborhood of $x$ contains $\alpha_n([0,1])$ for all but finitely many $n$. The \textit{infinite concatenation} of such a null sequence is the path $\prod_{n=1}^{\infty}\alpha_n$ defined to be $\alpha_n$ on $\left[\frac{n-1}{n},\frac{n}{n+1}\right]$ and $\left(\prod_{n=1}^{\infty}\alpha_n\right)(1)=x$.

A path $\alpha:[a,b]\to X$ is \textit{reduced} if $\alpha$ is constant or if whenever $a\leq s<t\leq b$ with $\alpha(s)=\alpha(t)$, the loop $\alpha|_{[s,t]}$ is not null-homotopic. If $X$ is a one-dimensional metric space, then every path $\alpha:\ui\to X$ is path-homotopic within the image of $\alpha$ to a reduced path, which is unique up to reparameterization \cite{EdaSpatial}.

If $H\leq \pionex$ is a subgroup and $\alpha:\ui\to X$ is a path from $\alpha(0)=x_0$ to $\alpha(1)=x$, let $H^{\alpha}=[\alpha^{-}]H[\alpha]\leq \pi_1(X,x)$ denote the path-conjugate subgroup under basepoint change.

For basic theory of linearly ordered sets, we refer to \cite{RosensteinLO}.

\begin{definition}\label{lodef}
Let $(L,\leq)$ be a linearly ordered set.
\begin{enumerate}
\item $L$ is \textit{dense} if $L$ has more than one point and if for each $x,y\in L$ with $x<y$, there exists $z\in L$ with $x<z<y$.
\item $L$ is a \textit{scattered order} if $L$ contains no dense suborders.
\end{enumerate}
\end{definition} 
The empty order and one-point order are scattered by definition. Every countable linear order embeds as a suborder of the dense order $\bbq$.
\begin{definition}
If $A$ is a non-degenerate compact subset of $\bbr$, let $\mci(A)$ denote the set of components of $[\min(A),\max(A)]\backslash A$ equipped with the ordering inherited from $\bbr$. 
\end{definition}
Note that $\mci(A)$ is always countable and if $A$ is nowhere dense, i.e. the closure of $A$ has empty interior, then $A$ may be identified with the set of cuts of $\mci(A)$. The intuition behind the transfinite products appearing in our main results may seem somewhat masked by subgroup-relative approach of the following sections. We use the next remark to provide some intuition for the absolute version of well-definedness of transfinite products in fundamental group(oids).

\begin{remark}
Finite and infinite concatenations of paths generalize to concatenations indexed by arbitrary countable linear orders in the following way: Let $\scrl$ be a countable linear order and $A\subseteq \ui$ be a closed, nowhere dense set containing $\{0,1\}$ such that there is an order-preserving bijection $\psi:\scrl\to \mci(A)$. A system of paths $(\alpha_j)_{j\in\scrl}$ in $X$ is \textit{composable} if there exists a \textit{transfinite concatenation} path $\alpha=\prod_{j\in\scrl}\alpha_{j}$ such that $\alpha|_{\overline{\psi(j)}}=\alpha_j$ for each $j\in\scrl$. Another choice of the pair $(A,\psi)$ will result in a reparameterization of $\prod_{j\in\scrl}\alpha_{j}$.

Hence, for any countably infinite linear order $\scrl$ and composable system of paths $(\alpha_j)_{j\in\scrl}$, there is a \textit{transfinite $\Pi_1$-product} of homotopy classes $([\alpha_j])_{j\in\scrl}\mapsto \left[\prod_{j\in\scrl}\alpha_{j}\right]$; this is a \textit{transfinite $\pi_1$-product} at $x\in X$ if each $\alpha_j$ is a loop based at $x$.

This partial infinitary operation on homotopy classes (of paths or loops) is well-defined if $\prod_{j\in\scrl}\alpha_{j}$ is path-homotopic to $\prod_{j\in\scrl}\beta_{j}$ whenever $\alpha_j$ is path-homotopic to $\beta_j$ for all $j\in\scrl$. The term ``well-defined transfinite $\Pi_1$-products" refers to the well-definedness of these operations on homotopy classes for all countable linear orders $\scrl$. The term ``well-defined transfinite $\pi_1$-products" refers to the well-definedness, for all $x\in X$, of the operation restricted to loops based at $x$.
\end{remark}

\section{closure operators on subgroups of fundamental groups}\label{closuresection}

\subsection{Review of Closure Pairs and Operators}

The following definitions are from \cite{BFTestMap} where closure operators on the $\pi_1$-subgroup lattice are introduced.

\begin{definition}\label{testmap}
Suppose $(\bbt,t_0)$ is a based space, $T\leq \pionet$ is a subgroup, and $g\in \pionet$. A subgroup $H\leq \pionex$ is $(T,g)${\it-closed} if for every based map $f:(\bbt,t_0)\to (X,x_0)$ such that $f_{\#}(T)\leq H$, we also have $f_{\#}(g)\in H$. We refer to $(T,g)$ as a {\it closure pair} for the \textit{test space} $(\bbt,t_0)$.
\end{definition}
The set of $(T,g)$-closed subgroups of $\pionex$ is closed under intersection and therefore forms a complete lattice.

\begin{definition}
The $(T,g)${\it-closure} of a subgroup $H\leq \pionex$ is \[ \cl_{T,g}(H)=\bigcap\{K\leq \pionex\mid K\text{ is }(T,g)\text{-closed and }H\leq K\}.\]
\end{definition}

We refer to Section 2 of \cite{BFTestMap} for proofs of the following basic properties of these closure operators.

\begin{lemma}[Closure Operator Properties of $\cl_{T,g}$]\label{closurepropertieslemma}
Let $(T,g)$ be a closure pair. Then $\cl_{T,g}(H)=H$ if and only if $H$ is $(T,g)$-closed. Moreover,
\begin{enumerate}
\item $H\leq \cl_{T,g}(H)$,
\item $H\leq K$ implies $\cl_{T,g}(H)\leq \cl_{T,g}(K)$,
\item $\cl_{T,g}(\cl_{T,g}(H))=\cl_{T,g}(H)$,
\item if $f:(X,x_0)\to (Y,y_0)$ is a map, then $f_{\#}(\cl_{T,g}(H))\leq \cl_{T,g}(f_{\#}(H))$ in $\pi_1(Y,y_0)$.
\end{enumerate}
\end{lemma}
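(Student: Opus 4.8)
The plan is to exploit the single structural fact, already recorded just before the definition, that an arbitrary intersection of $(T,g)$-closed subgroups is again $(T,g)$-closed. First I would note the two observations that make the defining intersection well-behaved: the whole group $\pionex$ is vacuously $(T,g)$-closed, since $f_{\#}(g)$ always lands in it, so the family of $(T,g)$-closed subgroups containing $H$ is nonempty; and this family is closed under intersection. Consequently $\cl_{T,g}(H)$ is the \emph{least} $(T,g)$-closed subgroup containing $H$, and in particular it is itself $(T,g)$-closed. Property (1) is then immediate, since every member of the intersection contains $H$.

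For the biconditional, if $H$ is $(T,g)$-closed then $H$ belongs to its own defining family, forcing $\cl_{T,g}(H)\leq H$, which together with (1) gives equality; conversely, $\cl_{T,g}(H)=H$ exhibits $H$ as the $(T,g)$-closed subgroup $\cl_{T,g}(H)$. Property (3) is then the special case of the biconditional applied to the already-closed subgroup $\cl_{T,g}(H)$ in place of $H$. For monotonicity (2), I would observe that $\cl_{T,g}(K)$ is a $(T,g)$-closed subgroup containing $K\geq H$, hence a member of the family defining $\cl_{T,g}(H)$, so that $\cl_{T,g}(H)\leq \cl_{T,g}(K)$.

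The substantive step is the functoriality property (4). Here I would set $L=\cl_{T,g}(f_{\#}(H))\leq \pioney$ and study its preimage $M=f_{\#}^{-1}(L)\leq \pionex$, which is a subgroup containing $H$ because $f_{\#}(H)\leq L$. The key claim is that $M$ is $(T,g)$-closed: given a based map $\phi:(\bbt,t_0)\to (X,x_0)$ with $\phi_{\#}(T)\leq M$, functoriality $(f\circ\phi)_{\#}=f_{\#}\circ\phi_{\#}$ gives $(f\circ\phi)_{\#}(T)=f_{\#}(\phi_{\#}(T))\leq L$, so the closedness of $L$ yields $(f\circ\phi)_{\#}(g)\in L$, i.e. $\phi_{\#}(g)\in M$. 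Thus $M$ is a $(T,g)$-closed subgroup containing $H$, whence $\cl_{T,g}(H)\leq M$, and applying $f_{\#}$ gives $f_{\#}(\cl_{T,g}(H))\leq L=\cl_{T,g}(f_{\#}(H))$, as desired.

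I expect the only real subtlety to lie in (4): one must apply the closedness of $L$ to the \emph{composite} test map $f\circ\phi$ rather than to $\phi$ itself, and one uses that $L$ is $(T,g)$-closed precisely because it is a closure. Everything else is the standard verification that an operator defined as the intersection of a family of ``closed'' objects satisfies the closure-operator axioms.
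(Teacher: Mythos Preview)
Your proof is correct. The paper itself does not prove this lemma; it simply refers the reader to Section~2 of \cite{BFTestMap}, so there is no in-paper argument to compare against. Your treatment is the standard closure-operator verification, and the handling of~(4) via the preimage $M=f_{\#}^{-1}(L)$ and the composite test map $f\circ\phi$ is exactly the expected argument.
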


See \cite{BrazScattered} for a construction of $\cl_{T,g}(H)$ from $H$ using transfinite induction.
%
\begin{definition}\label{normaltestpairdef}
A closure pair $(T,g)$ for the test space $(\bbt,t_0)$ is called \textit{normal} if given any space $(X,x_0)$ and subgroup $H\leq \pionex$, $H$ is $(T,g)$-closed if and only if for every path $\alpha\in P(X,x_0)$, $H^{\alpha}$ is a $(T,g)$-closed subgroup of $\pi_1(X,\alpha(1))$.
\end{definition}

If $(T,g)$ is a normal closure pair, then the closure operator $\cl_{T,g}$ preserves the normality of subgroups. Any closure pair for a well-pointed test space $(\bbt,t_0)$ is normal.

\subsection{The Hawaiian earring as a test space}

Let $C_n\subseteq \bbr^2$ be the circle of radius $\frac{1}{n}$ centered at $\left(\frac{1}{n},0\right)$ and $\bbh=\bigcup_{n\in\bbn}C_n$ to be the usual Hawaiian earring space with basepoint $b_0=(0,0)$. Let $\ell_n(t)=\left(\frac{1}{n}(1-\cos(2\pi t)),-\frac{1}{n}\sin(2\pi t)\right)$ be the canonical counterclockwise loop traversing $C_n$. We consider the following important loops in $\bbh$, which represent a prototypical $\omega$-product, dense product, and ``densely conjugated" (see the proof of Lemma \ref{twoclosureslemma} for justification of this term) product respectively. 
\begin{enumerate}
\item $\ell_{\infty}$ denotes the infinite concatenation $\prod_{n=1}^{\infty}\ell_n$ (see Figure \ref{lingfigure}).
\begin{figure}[H]
\centering \includegraphics[height=0.6in]{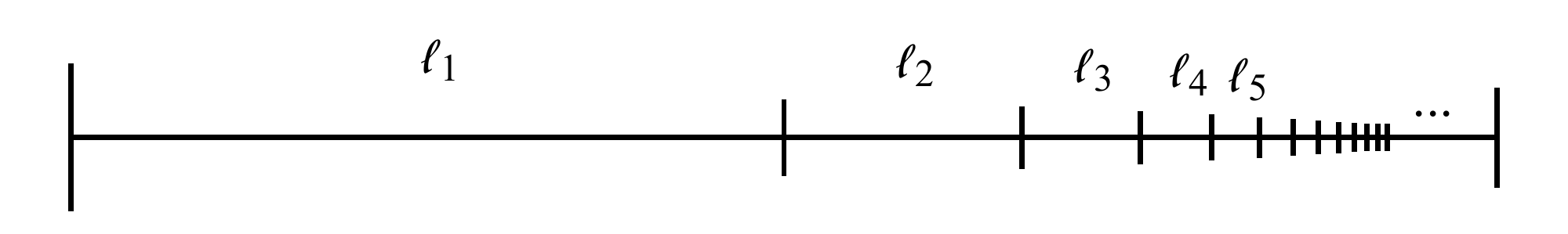}
\caption{\label{lingfigure}The loop $\ell_{\infty}$}
\end{figure}
\item Let $\mcc\subseteq \ui$ be the standard middle third Cantor set. Write $\mci(\mcc)=\{I_{n}^{k}\mid n\in\bbn,1\leq k\leq 2^{n-1}\}$ where $I_{n}^{k}$ is an open interval of length $\frac{1}{3^n}$ and, for fixed $n$, the sets $I_{n}^{k}$ are indexed by their ordering in $\mci(\mcc)$. Let $\ell_{\tau}:\ui\to \bbh$ be the transfinite concatenation $\prod_{I_{n}^{k}\in\mci(\mcc)}\ell_{2^{n-1}+k-1}$ over the dense order $\mci(\mcc)$. In other words, $\ell_{\tau}(\mcc)=b_0$ and $\ell_{\tau}|_{\overline{I_{n}^{k}}}=\ell_{2^{n-1}+k-1}$ (see Figure \ref{elltaufigure}).
\begin{figure}[H]
\centering \includegraphics[height=0.6in]{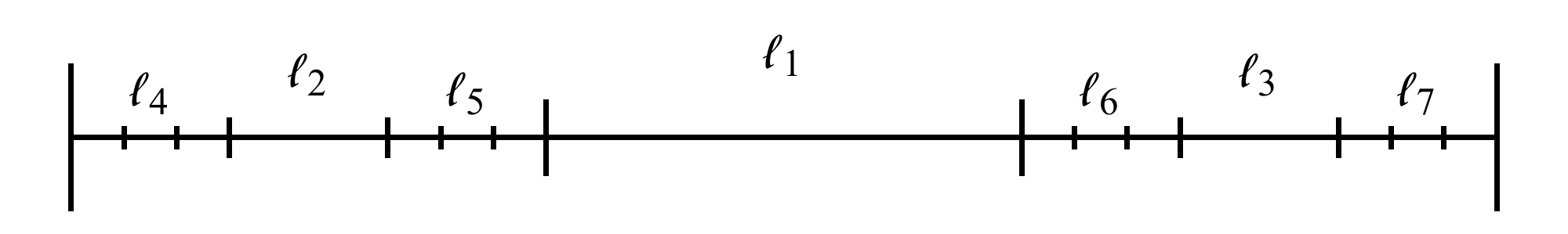}
\caption{\label{elltaufigure}The loop $\ell_{\tau}$}
\end{figure}
\item Consider the maps $f_{odd},f_{even}:\bbh\to \bbh$ satisfying $f_{odd}\circ \ell_n=\ell_{2n-1}$ and $f_{even}\circ \ell_{n}=\ell_{2n}$. Let $\ell_{odd}=f_{odd}\circ \ell_{\tau}$ and $\ell_{even}=f_{even}\circ \ell_{\tau}$. We make use of the concatenation $\ell_{odd}\cdot \ell_{even}^{-}$ (see Figure \ref{ptaufigure}).
\begin{figure}[H]
\centering \includegraphics[height=0.55in]{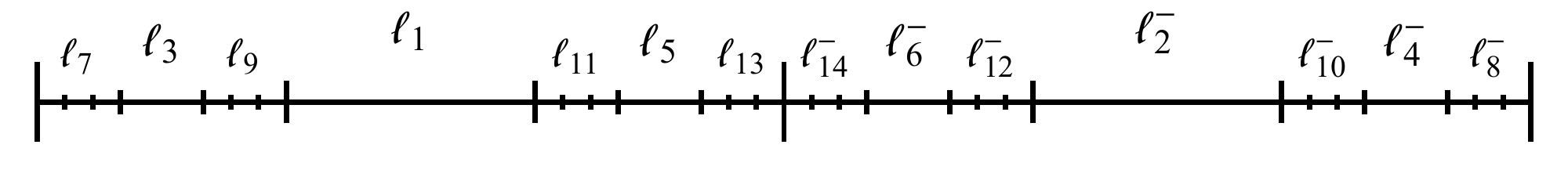}
\caption{\label{ptaufigure}The loop $\ell_{odd}\cdot \ell_{even}^{-}$}
\end{figure}
\end{enumerate}

Let $\bbhp=\bbh\cup ([-1,0]\times \{0\})$ be the Hawaiian earring with a ``whisker" attached with basepoint $\bpp=(-1,0)$ and where $\iota:\ui\to \bbhp$, $\iota(t)=(t-1,0)$ is the inclusion of the whisker (See Figure \ref{bbhpspacefig}). We define the following elements of $\pi_1(\bbhp,\bpp)$:
$$\begin{array}{lcl}
c_n=[\iota\cdot\ell_n\cdot\iota^{-}],\quad n\in\bbn; & \quad &p_n=[\iota\cdot\ell_{2n-1}\cdot\ell_{2n}^{-}\cdot\iota^{-}],\quad n\in\bbn;\\
c_{\tau}=[\iota\cdot\ell_{\tau}\cdot\iota^{-}]; & \quad  &  p_{\tau}=[\iota\cdot \ell_{odd}\cdot\ell_{even}^{-}\cdot \iota];\\
c_{\infty}=[\iota\cdot\ell_{\infty}\cdot\iota^{-}];
\end{array}$$
and the following subgroups of $\pi_1(\bbhp,\bpp)$:
$$\begin{array}{lcl}
C=\lb c_n\mid n\in\bbn\rb; & \quad &  P=\lb p_n\mid n\in\bbn\rb.
\end{array}$$

\begin{figure}[H]
\centering \includegraphics[height=1.8in]{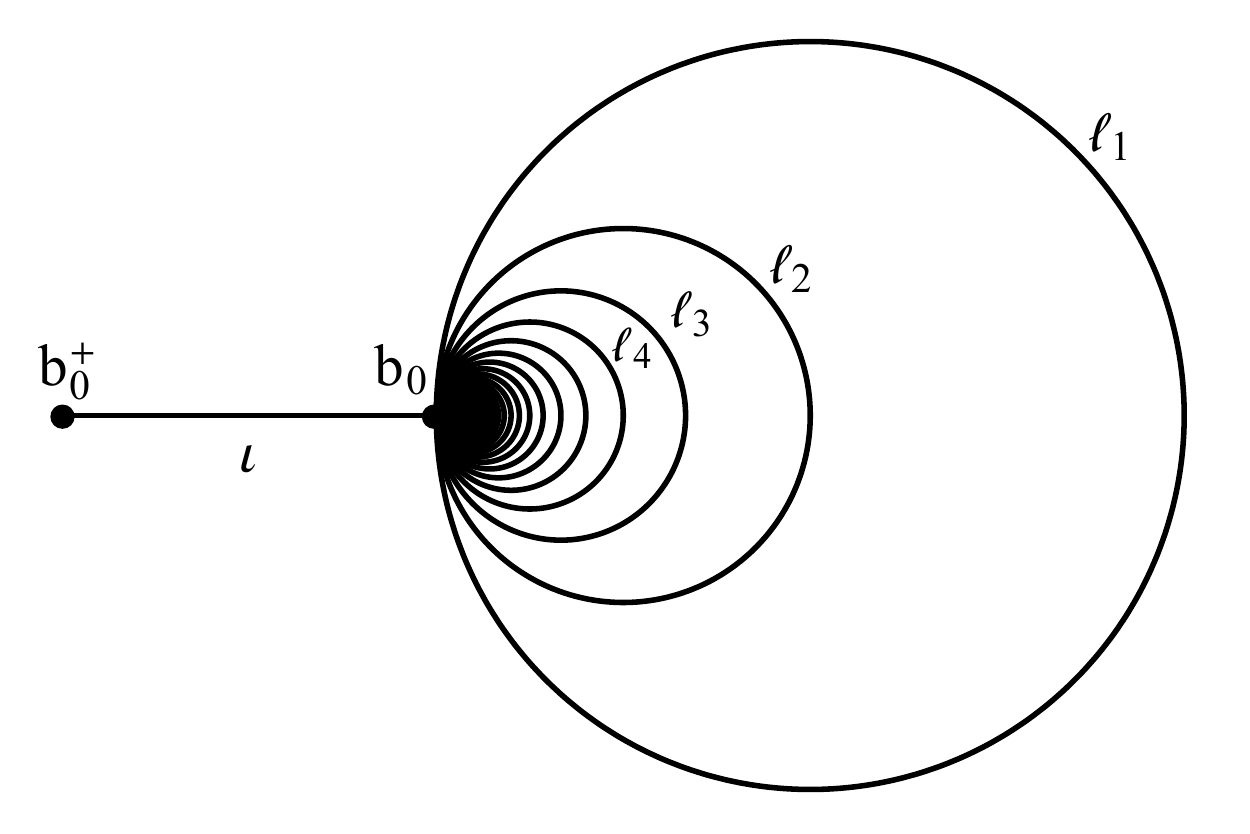}
\caption{\label{bbhpspacefig}The space $\bbhp$.}
\end{figure}

We consider the closure pairs $(C,c_{\infty})$, $(C,c_{\tau})$, and $(P,p_{\tau})$. The use of the well-pointed space $(\bbhp,\bpp)$ in place of $(\bbh,b_0)$ means that these closure pairs are normal, that is, their respective closure operators detect point-local properties at all points of a space rather than at a single basepoint.

\begin{definition}{\cite{CMRZZ08,FZ07}}\label{homhausdorffdef}
We call $X$ \textit{homotopically Hausdorff relative to a subgroup} $H\leq \pionex$ if for every $x\in X$, every path $\alpha:\ui\to X$ from $\alpha(0)=x_0$ to $\alpha(1)=x$, and every $g\in \pi_1(X,x)\backslash H^{\alpha}$, there is an open neighborhood $U$ of $x$ such that there is no loop $\delta:(\ui,\{0,1\})\to (U,x)$ with $H^{\alpha}g=H^{\alpha}[\delta]$. The space $X$ is \textit{homotopically Hausdorff} if it is homotopically Hausdorff relative to the trivial subgroup $H=1$.
\end{definition}

\begin{remark}
A space $X$ is homotopically Hausdorff if and only if for every point $x\in X$, there are no non-trivial elements of $\pi_1(X,x)$ that have a representative loop in every neighborhood of $x$.
\end{remark}

The two propositions to follow are combinations of results in Section 3 of \cite{BFTestMap}.

\begin{proposition}\label{homhauscharprop}
For any space $X$ and subgroup $H\leq \pionex$, consider the following three properties:
\begin{enumerate}
\item $X$ is homotopically Hausdorff relative to $H$,
\item $H$ is $(C,c_{\tau})$-closed,
\item $H$ is $(C,c_{\infty})$-closed.
\end{enumerate}
In general, (1) $\Rightarrow$ (3) and (2) $\Rightarrow$ (3). If $X$ is first countable, then (3) $\Rightarrow$ (1). If $H$ is normal, then (3) $\Rightarrow$ (2).
\end{proposition}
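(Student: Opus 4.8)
The plan is to translate every based map $f:(\bbhp,\bpp)\to(X,x_0)$ into combinatorial data and read off the three conditions from it. Such an $f$ is determined by the path $\alpha=f\circ\iota$ from $x_0$ to $x:=f(b_0)$ together with the null sequence of loops $\gamma_n:=f\circ\ell_n$ based at $x$; continuity at $b_0$ is exactly the null-sequence condition. Writing $H^{\alpha}=[\alpha^{-}]H[\alpha]$, one has $f_\#(c_n)\in H$ iff $[\gamma_n]\in H^{\alpha}$, while $f_\#(c_\infty)\in H$ iff $[\prod_n\gamma_n]\in H^{\alpha}$ and $f_\#(c_\tau)\in H$ iff $[\gamma_\tau]\in H^{\alpha}$, where $\gamma_\tau=\prod_{I_n^k\in\mci(\mcc)}\gamma_{2^{n-1}+k-1}$ is the transfinite concatenation obtained by pushing $\ell_\tau$ through $f$. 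Thus $H$ is $(C,g)$-closed precisely when, for every such $(\alpha,(\gamma_n))$ with all $[\gamma_n]\in H^{\alpha}$, the designated product class lies in $H^{\alpha}$. All four implications will be proved in this language.

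For the two implications into (3), first consider (1)$\Rightarrow$(3). Given data with $[\gamma_n]\in H^{\alpha}$ for all $n$, suppose $[\prod_n\gamma_n]\notin H^{\alpha}$. Homotopic Hausdorffness relative to $H$, applied at $x$ along $\alpha$ to $g=[\prod_n\gamma_n]$, produces a neighborhood $U$ of $x$ admitting no loop $\delta$ in $U$ with $H^{\alpha}g=H^{\alpha}[\delta]$. But the null condition puts the tail $\prod_{n\ge N}\gamma_n$ inside $U$ for large $N$, and since the omitted initial segment has class in $H^{\alpha}$ we get $H^{\alpha}g=H^{\alpha}[\prod_{n\ge N}\gamma_n]$, a contradiction; hence $[\prod_n\gamma_n]\in H^{\alpha}$. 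For (2)$\Rightarrow$(3), I would build a based self-map $q:(\bbhp,\bpp)\to(\bbhp,\bpp)$ that is the identity on the whisker, sends the circles carrying a fixed cofinal increasing sequence of Cantor gaps onto $C_1,C_2,\dots$ in order, and collapses all remaining circles to $b_0$. Then $q$ satisfies the null condition, $q\circ\ell_\tau\simeq\ell_\infty$ (a monotone reparameterization absorbs the collapsed constant subintervals), and $q$ carries each circle to a circle or to $b_0$. Consequently $f\circ q$ has $(f\circ q)_\#(C)\le H$ whenever $f_\#(C)\le H$, while $(f\circ q)_\#(c_\tau)=f_\#(c_\infty)$; applying $(C,c_\tau)$-closedness to $f\circ q$ yields $f_\#(c_\infty)\in H$.

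For (3)$\Rightarrow$(1) under first countability I argue contrapositively. A failure of homotopic Hausdorffness relative to $H$ provides $x$, a path $\alpha$, and $g\in\pi_1(X,x)\setminus H^{\alpha}$ with loops representing $H^{\alpha}g$ in every neighborhood of $x$. Choosing a nested countable neighborhood basis at $x$ extracts a null sequence of loops $\delta_n$ with $[\delta_n]\in H^{\alpha}g$. The consecutive differences $\varepsilon_n=\delta_n\cdot\delta_{n+1}^{-}$ then form a null sequence with $[\varepsilon_n]\in H^{\alpha}$, and the telescoping identity $\prod_n\varepsilon_n\simeq\delta_1$ gives an infinite product whose class $[\delta_1]\in H^{\alpha}g$ lies outside $H^{\alpha}$. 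Feeding $(\alpha,(\varepsilon_n))$ into a map $\bbhp\to X$ then exhibits a violation of $(C,c_\infty)$-closedness, the desired contradiction. The one technical point to justify carefully is the infinite telescoping $\prod_n(\delta_n\cdot\delta_{n+1}^{-})\simeq\delta_1$, which holds for null sequences because the cancellations can be performed on ever-shorter parameter intervals accumulating at $1$.

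The hard direction is (3)$\Rightarrow$(2) for normal $H$, and it is where the non-scattered, dense accumulation of $\mcc$ must be confronted. Since $(\bbhp,\bpp)$ is well-pointed, $(C,c_\infty)$ is a normal closure pair, so $(C,c_\infty)$-closedness of $H$ upgrades to $(C,c_\infty)$-closedness of $N:=H^{\alpha}$ at every basepoint; moreover $N\trianglelefteq\pi_1(X,x)$ since $H$ is normal. Given data with all $[\gamma_n]\in N$, I would track the partial-product cosets $c\mapsto L_cN\in\pi_1(X,x)/N$ for $c\in\mcc$, where $L_c=[\gamma_\tau|_{[0,c]}]$. This function is constant across each Cantor gap (the gap contributes a single $\gamma_m\in N$) and, at a point approached from below, respects the limit whenever the intervening increments lie in $N$: the increments form a null sequence and $(C,c_\infty)$-closedness of $N$ forces their infinite product into $N$; normality additionally handles the reversed ($\omega^{*}$-type) products occurring to the right of an accumulation point, since reversing an $\omega^{*}$-product turns it into an $\omega$-product of inverses. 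A minimal-counterexample argument (taking the infimum of the bad set $\{c:L_c\notin N\}$), combined with these two limit principles, should force the bad set to be empty, whence $[\gamma_\tau]=L_1\in N$. The crux, and the step I expect to require the full transfinite apparatus of \cite{BFTestMap,BrazScattered}, is closing this infimum argument at two-sided accumulation points of $\mcc$, precisely because $\mcc$ is perfect and admits no induction on Cantor--Bendixson rank; it is exactly here that both $\omega$-closedness and the normality of $N$ are indispensable.
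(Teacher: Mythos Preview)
The paper does not prove this proposition; it records it as a combination of results from \cite{BFTestMap}. Your arguments for (1)$\Rightarrow$(3), (2)$\Rightarrow$(3), and (under first countability) (3)$\Rightarrow$(1) are correct and essentially standard.

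The gap is in (3)$\Rightarrow$(2), and you have correctly located it yourself. Your infimum argument cannot close: the ``good set'' $G=\{c\in\mcc:L_c\in N\}$ is closed under monotone limits (your two limit principles give this), but you have no mechanism to show it is open, and in a perfect set closedness alone is useless. Concretely, if $s=\sup G$ is a two-sided limit point of $\mcc$, then pushing past $s$ to any $c>s$ in $\mcc$ requires knowing $[\gamma_\tau|_{[s,c]}]\in N$, and this restriction is itself a dense product over a copy of $\mcc$---the exact problem you started with. No amount of $\omega$- or $\omega^*$-closure breaks this self-similarity, and there is no Cantor--Bendixson induction available.

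The approach that works bypasses the order structure of $\mcc$ entirely: one constructs a single based self-map $h:(\bbhp,\bpp)\to(\bbhp,\bpp)$ with $h_\#(c_\infty)=c_\tau$ and each $h_\#(c_n)$ a conjugate in $\pi_1(\bbhp,\bpp)$ of some $c_m$. Then for any $f$ with $f_\#(C)\le H$ and $H$ normal, $(f\circ h)_\#(C)\le H$, so $(C,c_\infty)$-closedness yields $f_\#(c_\tau)=(f\circ h)_\#(c_\infty)\in H$. The map $h$ is built by recursively peeling conjugates off $\ell_\tau$: write $\ell_\tau=a\cdot\ell_1\cdot b$ so that $[\ell_\tau]=[a\,\ell_1\,a^-][ab]$; repeat on $ab=a'\ell_2 b'a''\ell_3 b''$ to peel off conjugates of $\ell_2,\ell_3$; and so on. The resulting loops $\zeta_n$ (each a conjugate of some $\ell_m$) form a null sequence because at stage $k$ both the remainder and all conjugating paths lie in $\bigcup_{m\ge 2^{k-1}}C_m$; that $\prod_n\zeta_n\simeq\ell_\tau$ then follows from the $\pi_1$-shape injectivity of $\bbh$. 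Setting $h\circ\iota=\iota$ and $h\circ\ell_n=\zeta_n$ completes the construction. The point is that normality is used once, globally, to absorb the conjugating prefixes---not pointwise along $\mcc$ as your approach attempts.
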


\begin{example}\label{fandscex}
A topological space $S$ is a \textit{scattered space} if every non-empty subspace of $S$ contains an isolated point. If $X$ is a one-dimensional metric space and $A\subseteq X$ is closed, we define: 
\begin{enumerate}
\item[] $F(X,A)=\{[\alpha]\in \pionex\mid \alpha^{-1}(A)\text{ is finite or }\alpha\text{ is constant}\}$,
\item[] $\sct(X,A)=\{[\alpha]\in \pionex\mid \alpha^{-1}(A)\text{ is a scattered space or }\alpha\text{ is constant}\}$.
\end{enumerate}
It is shown in \cite{BrazScattered} that $\cl_{C,c_{\infty}}(F(X,A))=Sc(X,A)$. For instance, the subgroup $Sc=Sc(\bbh,\{b_0\}) \leq \pi_1(\bbh,b_0)$ of scattered words \cite{EK99subgroups} is an example of a non-normal subgroup which is $(C,c_{\infty})$-closed but not $(C,c_{\tau})$-closed.
\end{example}

\begin{definition}\label{tproddef}
We say a space $X$ has {\it well-defined transfinite $\pi_1$-products relative to a subgroup} $H\leq \pionex$ provided that for every pair of maps $a,b:(\bbhp,\bpp)\to (X,x_0)$ such that $a\circ\iota=b\circ\iota$ and $Ha_{\#}(c_n)=Hb_{\#}(c_n)$ for all $n\in\bbn$, we have $Ha_{\#}(g)=Hb_{\#}(g)$ for all $g\in\pi_1(\bbhp,\bpp)$. We say $X$ has {\it well-defined transfinite $\pi_1$-products} if $X$ has well-defined transfinite $\pi_1$-products relative to $H=1$.
\end{definition}

\begin{remark}
A space $X$ has well-defined transfinite $\pi_1$-products if and only if for every pair of based maps $f,g:(\bbh,b_0)\to(X,x)$ such that $f\circ\ell_n\simeq g\circ\ell_n$ for all $n\in\bbn$, we have $f_{\#}=g_{\#}$.
\end{remark}

\begin{proposition}\label{transfiniteproductprop}
For any space $X$ and subgroup $H\leq \pionex$, consider the following four properties:
\begin{enumerate}
\item $X$ has well-defined transfinite $\pi_1$-products relative to $H$,
\item $H$ is $(P,p_{\tau})$-closed,
\item $H$ is $(C,c_{\tau})$-closed,
\item $H$ is $(C,c_{\infty})$-closed.
\end{enumerate}
In general, (1) $\Leftrightarrow$ (2) $\Rightarrow$ (3) $\Rightarrow$ (4). If $H$ contains the commutator subgroup of $\pi_1(X,x_0)$, then all four are equivalent.
\end{proposition}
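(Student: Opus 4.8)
The plan is to route everything through the \emph{interleaving correspondence} for maps out of $\bbh$. Because $(P,p_{\tau})$ is a normal closure pair and the maps in Definition \ref{tproddef} share a whisker $a\circ\iota=b\circ\iota$, a basepoint change lets me assume this common whisker is constant; equivalently I work with based maps $\tilde a,\tilde b:(\bbh,b_0)\to(X,x_1)$ to a common endpoint $x_1$ and the conjugated subgroup (still written $H$), so that whiskers drop out of the formulas for $c_n,c_{\tau},p_n,p_{\tau}$. Given such $\tilde a,\tilde b$, let $f:(\bbh,b_0)\to(X,x_1)$ have $f\circ\ell_{2n-1}=\tilde a\circ\ell_n$ and $f\circ\ell_{2n}=\tilde b\circ\ell_n$; each family is a null sequence, so $f$ is well defined, and every map out of $\bbh$ arises this way. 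From $p_n=c_{2n-1}c_{2n}^{-1}$ and $f\circ\ell_{odd}=(f\circ f_{odd})\circ\ell_{\tau}$ with $(f\circ f_{odd})\circ\ell_n=\tilde a\circ\ell_n$, one computes $f_{\#}(p_n)=[\tilde a\circ\ell_n][\tilde b\circ\ell_n]^{-1}$ and $f_{\#}(p_{\tau})=[\tilde a\circ\ell_{\tau}][\tilde b\circ\ell_{\tau}]^{-1}$. Thus $f_{\#}(P)\leq H$ is exactly the hypothesis of Definition \ref{tproddef}, while $f_{\#}(p_{\tau})\in H$ is its conclusion for $g=c_{\tau}$, so applying the definition to $g=c_{\tau}$ proves (1)$\Rightarrow$(2).

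The substance of (2)$\Rightarrow$(1) is to promote the single class $c_{\tau}$ to an arbitrary $g=[\iota\cdot w\cdot\iota^{-}]$, where $w$ is a loop in $\bbh$. Writing $w$ as a transfinite concatenation $\prod_{j\in\mcl}\ell_{m_j}^{\epsilon_j}$ over a countable order $\mcl$, the key structural fact is that continuity of $w$ forces each circle to be traversed only finitely often, so $\{j:m_j\leq M\}$ is finite for every $M$. I would fix an order-embedding $\mcl\hookrightarrow\mci(\mcc)$ (possible since $\mci(\mcc)$ is densely ordered) and, writing $k_j$ for the index with $\ell_{\tau}|_{\overline{I_j}}=\ell_{k_j}$ on the interval $I_j$ assigned to $j$, define $f_g$ by $f_g\circ\ell_{2k_j-1}=\tilde a\circ\ell_{m_j}^{\epsilon_j}$ and $f_g\circ\ell_{2k_j}=\tilde b\circ\ell_{m_j}^{\epsilon_j}$, constant on all remaining circles. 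The finiteness fact makes $f_g$ continuous, and discarding the constant factors gives $f_g\circ\ell_{odd}\simeq\tilde a\circ w$ and $f_g\circ\ell_{even}\simeq\tilde b\circ w$, whence $f_{g\#}(p_{\tau})=[\tilde a\circ w][\tilde b\circ w]^{-1}$. Applying (2) to $f_g$ then yields $H[\tilde a\circ w]=H[\tilde b\circ w]$, \emph{provided} $f_{g\#}(P)\leq H$.

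Verifying $f_{g\#}(P)\leq H$ is the main obstacle. A factor with $\epsilon_j=+1$ gives $f_{g\#}(p_{k_j})=[\tilde a\circ\ell_{m_j}][\tilde b\circ\ell_{m_j}]^{-1}\in H$ directly from the hypothesis, but a factor with $\epsilon_j=-1$ gives $[\tilde a\circ\ell_{m_j}]^{-1}[\tilde b\circ\ell_{m_j}]$, a \emph{right}-coset condition the left-coset hypothesis does not supply unless $H$ is normal. I would resolve this by first extracting normality from (2) itself: for any $u$ and any $h\in H$, choose intervals $I<I'$ of $\mci(\mcc)$ with indices $k,k'$ and let $f$ send $\ell_{2k-1},\ell_{2k}$ to a loop representing $u$ and send $\ell_{2k'-1}$ to a loop representing $h$ with $\ell_{2k'}$ constant (all other circles constant). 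Then every $f_{\#}(p_n)\in H$ yet $f_{\#}(p_{\tau})=uhu^{-1}$, so (2) forces $uHu^{-1}\leq H$ and hence $H\trianglelefteq\pionex$. For normal $H$ the two coset conditions coincide, so $f_{g\#}(P)\leq H$ holds for all factors and (2)$\Rightarrow$(1) is complete.

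The remaining implications are lighter. For (2)$\Rightarrow$(3), given $f$ with $f_{\#}(C)\leq H$, the map $\widehat f$ with $\widehat f\circ\ell_{2n-1}=f\circ\ell_n$ and $\widehat f\circ\ell_{2n}$ constant satisfies $\widehat f_{\#}(p_n)=f_{\#}(c_n)\in H$ and $\widehat f_{\#}(p_{\tau})=f_{\#}(c_{\tau})$, so (2) gives $f_{\#}(c_{\tau})\in H$. Implication (3)$\Rightarrow$(4) is precisely (2)$\Rightarrow$(3) of Proposition \ref{homhauscharprop}. Finally, assuming $H\supseteq[\pionex,\pionex]$, so that $H$ is normal and $\pionex/H$ is abelian, the implication (4)$\Rightarrow$(3) is the normal-case implication (3)$\Rightarrow$(2) of Proposition \ref{homhauscharprop}; and for (3)$\Rightarrow$(2) I would pass from a given $f$ with $f_{\#}(P)\leq H$ to $F$ with $F\circ\ell_n=(f\circ\ell_{2n-1})\cdot(f\circ\ell_{2n})^{-}$, so that $F_{\#}(c_n)=f_{\#}(p_n)\in H$ and, modulo $[\pionex,\pionex]\subseteq H$, $F_{\#}(c_{\tau})\equiv f_{\#}(p_{\tau})$ by an order-independent rearrangement valid in the abelian quotient (the secondary point requiring care). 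Thus $(C,c_{\tau})$-closedness forces $f_{\#}(p_{\tau})\in H$, closing the cycle of equivalences.
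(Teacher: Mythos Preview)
The paper does not supply its own proof; the proposition is cited from \cite{BFTestMap}. Your argument for (1)$\Leftrightarrow$(2)$\Rightarrow$(3)$\Rightarrow$(4) is correct, and the device of extracting normality of $H$ directly from (2)---via the two-circle test map giving $f_\#(p_\tau)=uhu^{-1}$---neatly resolves the $\epsilon_j=-1$ obstruction in (2)$\Rightarrow$(1). A few details (rewriting an arbitrary loop in $\bbh$ as a transfinite word in the $\ell_m^{\pm}$ up to homotopy, absorbing constant factors when passing from a concatenation over $\mci(\mcc)$ to one over the embedded $\mcl$) are glossed over but are standard.

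The genuine gap is your final implication (3)$\Rightarrow$(2) under the commutator hypothesis. Writing $h:\bbhp\to\bbhp$ for the map with $h\circ\iota=\iota$ and $h\circ\ell_n=\ell_{2n-1}\cdot\ell_{2n}^{-}$, your $F$ is $f\circ h$, so the claim ``$F_\#(c_\tau)\equiv f_\#(p_\tau)\bmod[\pionex,\pionex]$'' for arbitrary $f$ amounts to $h_\#(c_\tau)\,p_\tau^{-1}\in[\pi_1(\bbhp,\bpp),\pi_1(\bbhp,\bpp)]$. Abelianness of a quotient permits only \emph{finite} rearrangements; it does not force two transfinite products with the same factors to represent the same homology class. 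Indeed the kernel of the winding-number map $\pi_1(\bbh,b_0)\to\prod_n\bbz$ is strictly larger than the commutator subgroup (equivalently, the induced surjection $H_1(\bbh)\to\prod_n\bbz$ is not injective), so the fact that $h_\#(c_\tau)$ and $p_\tau$ have matching winding numbers is insufficient. The step you flag as ``requiring care'' therefore needs a substantively different argument, not merely more care; at minimum one must show that $h_\#(c_\tau)\,p_\tau^{-1}$ lies in the $(C,c_\tau)$-closure of the commutator subgroup of $\pi_1(\bbhp,\bpp)$, which is a nontrivial claim your sketch does not address.
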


\begin{example}
If $f:\bbhp\to\bbhp$ is the map defined so that $f\circ\iota=\iota$ and $f\circ\ell_n=\ell_{2n-1}\cdot\ell_{2n}^{-}$, $n\in\bbn$, then $\cl_{C,c_{\tau}}(P)=f_{\#}(\pi_1(\bbhp,\bpp))$ is an example of a non-normal subgroup of $\pi_1(\bbhp,\bpp)$, which is $(C,c_{\tau})$-closed but not $(P,p_{\tau})$-closed (see the proof of \cite[Theorem 3.25]{BFTestMap}). There is no known \textit{normal} counterexample.
\end{example}

\section{Relative CW-Complexes}\label{sectionrelativecw}

To simplify the proof of Theorem \ref{counterexamplespace}, we consider \textit{relative CW-complexes}, i.e. spaces constructed by attaching 2-cells to spaces that are not necessarily CW-complexes.

\begin{proposition}\label{cweasy}
Suppose $(T,g)$ is a closure pair for test space $(\bbt,t_0)$, $(X,x_0)$ is a space, and $Y=X\cup\bigcup_{\beta\in S} D^{2}_{\beta}$ is the space obtained from $X$ by attaching 2-cells along a family $S$ of loops in $X$ based at $x_0$. If $j:X\to Y$ is the inclusion map and the trivial subgroup of $\pi_1(Y,x_0)$ is $(T,g)$-closed, then $\ker j_{\#}$ is $(T,g)$-closed.
\end{proposition}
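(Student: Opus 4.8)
The plan is to verify the defining condition of $(T,g)$-closedness for $\ker j_{\#}$ directly, using only the functoriality of $\pi_1$ together with the hypothesis on $Y$; the particular structure of $Y$ as a relative CW-complex plays no role in this direction. Concretely, $\ker j_{\#}\leq\pionex$ will be $(T,g)$-closed once I establish the following: for every based map $f:(\bbt,t_0)\to(X,x_0)$ with $f_{\#}(T)\leq\ker j_{\#}$, one has $f_{\#}(g)\in\ker j_{\#}$.

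So I would fix such an $f$. The hypothesis $f_{\#}(T)\leq\ker j_{\#}$ says precisely that $j_{\#}(f_{\#}(t))=1$ for every $t\in T$. Since $j_{\#}\circ f_{\#}=(j\circ f)_{\#}$ by functoriality and $j$ is basepoint-preserving, this reads $(j\circ f)_{\#}(T)\leq\{1\}$, the trivial subgroup of $\pi_1(Y,x_0)$. At this point I would invoke the standing hypothesis that $\{1\}\leq\pi_1(Y,x_0)$ is $(T,g)$-closed: applying its defining property to the legitimate test map $j\circ f:(\bbt,t_0)\to(Y,x_0)$ yields $(j\circ f)_{\#}(g)\in\{1\}$, that is $j_{\#}(f_{\#}(g))=1$. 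Hence $f_{\#}(g)\in\ker j_{\#}$, which is exactly what is required. As $f$ was arbitrary, $\ker j_{\#}$ is $(T,g)$-closed.

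There is essentially no obstacle here: the statement is a formal consequence of the observation that $(T,g)$-closedness pulls back along any basepoint-preserving map $j$, since $\ker j_{\#}=j_{\#}^{-1}(\{1\})$ is the $j_{\#}$-preimage of the $(T,g)$-closed subgroup $\{1\}\leq\pi_1(Y,x_0)$. The only point worth double-checking is bookkeeping of basepoints — using $j(x_0)=x_0$ so that the composite $j\circ f$ is a based map into $(Y,x_0)$ and the equality $(j\circ f)_{\#}=j_{\#}\circ f_{\#}$ holds on the nose. The genuine work of this section presumably lies in a complementary statement about how attaching $2$-cells reflects closure in the opposite direction, rather than in this ``easy'' half, where the hypothesis on $Y$ is simply transported back to $X$ through $j_{\#}$.
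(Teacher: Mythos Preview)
Your argument is correct and is essentially identical to the paper's proof: both fix a test map $f$ with $f_{\#}(T)\leq\ker j_{\#}$, compose with $j$ to obtain $(j\circ f)_{\#}(T)=1$, invoke the $(T,g)$-closedness of the trivial subgroup in $\pi_1(Y,x_0)$ to conclude $(j\circ f)_{\#}(g)=1$, and read off $f_{\#}(g)\in\ker j_{\#}$. Your additional remarks about basepoints and the preimage formulation are accurate but not needed beyond what the paper records.
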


\begin{proof}
Suppose $1\leq \pi_1(Y,x_0)$ is $(T,g)$-closed and $f:(\bbt,t_0)\to (X,x_0)$ is a map such that $f_{\#}(T)\leq \ker j_{\#}$. Then $j\circ f:(\bbt,t_0)\to (Y,x_0)$ satisfies $(j\circ f)_{\#}(T)=1$ so by assumption, we have $(j\circ f)_{\#}(g)=1$. Thus $f_{\#}(g)\in\ker j_{\#}$.
\end{proof}

The next proposition shows the converse of Proposition \ref{cweasy} holds in a special case. For each $n\in\bbn$, both $\bbh_{\leq n}^{+}=[-1,0]\times \{0\}\cup \bigcup_{k=1}^{n}C_k$ and $\bbh_{\geq n}^{+}=[-1,0]\times \{0\}\cup \bigcup_{k=n}^{\infty}C_k$ are retracts of $\bbhp$. Let $r_n:\bbhp\to \bbh_{\leq n}^{+}$ denote the retraction collapsing $\bigcup_{k=n+1}^{\infty}C_k$ to $b_0$ and identify the free subgroup $C_{\leq n}=\lb c_k\mid 1\leq k\leq n\rb$ of $\pi_1(\bbhp,\bpp)$ with the fundamental group $\pi_1(\bbh_{\leq n}^{+},\bpp)$. Since $\bbhp$ is one-dimensional, the retractions $(r_n)_{\#}$ induce a natural injection $\pi_1(\bbhp,\bpp)\to \check{\pi}_{1}(\bbhp,\bpp)=\varprojlim_{n} C_{\leq n}$ to the first shape homotopy group. Consequently, $\pi_1(\bbhp,\bpp)$ splits as the free product $C_{\leq n}\ast \pi_1(\bbh_{\geq n+1}^{+},\bpp)$ for each $n\in\bbn$. 

\begin{lemma}\label{cellsandclosurelemma}
Suppose $(T,g)$ is a closure pair for $(\bbhp,\bpp)$ such that for all $n\in\bbn$, there exists an $m\geq n$ such that $(r_{m})_{\#}(g)$ lies in the normal closure of $(r_{m})_{\#}(T)$ in $C_{\leq m}$. Suppose $(X,x_0)$ is a space and $Y=X\cup\bigcup_{\beta\in S} D^{2}_{\beta}$ is the space obtained from $X$ by attaching 2-cells along a family $S$ of loops in $X$ based at $x_0$. If $j:X\to Y$ is the inclusion map, then $\ker j_{\#}$ is $(T,g)$-closed if and only if the trivial subgroup of $\pi_1(Y,x_0)$ is $(T,g)$-closed.
\end{lemma}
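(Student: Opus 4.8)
The plan is to prove the nontrivial implication; the reverse implication, that the trivial subgroup of $\pi_1(Y,x_0)$ being $(T,g)$-closed forces $\ker j_{\#}$ to be $(T,g)$-closed, is exactly Proposition \ref{cweasy} and uses neither the hypothesis on $(T,g)$ nor the $2$-cell structure in an essential way. So I would assume $\ker j_{\#}$ is $(T,g)$-closed and show the trivial subgroup of $\pi_1(Y,x_0)$ is $(T,g)$-closed. Unwinding Definition \ref{testmap}, this means taking an arbitrary based map $f:(\bbhp,\bpp)\to (Y,x_0)$ with $f_{\#}(T)=1$ and proving $f_{\#}(g)=1$. By the van Kampen theorem for attaching $2$-cells, $j_{\#}:\pi_1(X,x_0)\to\pi_1(Y,x_0)$ is surjective with $\ker j_{\#}$ the normal closure of $\{[\beta]\mid \beta\in S\}$, so the target group is $\pi_1(X,x_0)/\ker j_{\#}$ and the problem is to compress, or partially compress, $f$ into $X$.

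The central difficulty, and the reason a hypothesis on $(T,g)$ is needed at all, is that $\bbhp$ is not a CW-complex: one cannot simply apply cellular approximation to push $f$ off the open $2$-cells, since a homotopy compressing $f$ into $X$ need not be continuous at the wild point $b_0$ where infinitely many circles accumulate. The hypothesis is tailored to bypass this by reducing the question about $g$ to the finite sub-wedges $\bbh_{\leq m}^{+}$, each of which \emph{is} a finite graph. Concretely, I would fix $n\in\bbn$ and use the hypothesis to choose $m\geq n$ with $(r_m)_{\#}(g)$ in the normal closure of $(r_m)_{\#}(T)$ in the free group $C_{\leq m}$, writing $(r_m)_{\#}(g)=\prod_i u_i\,(r_m)_{\#}(\tau_i)\,u_i^{-1}$ with $\tau_i\in T$ and $u_i\in C_{\leq m}$. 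Letting $s_m:\bbh_{\leq m}^{+}\hookrightarrow\bbhp$ denote the inclusion, the map $f\circ s_m:\bbh_{\leq m}^{+}\to Y$ is a map of a finite graph, so since $2>1$ it is homotopic rel $\bpp$ to a map $\rho_m:\bbh_{\leq m}^{+}\to X$: on the part of the graph landing in an open $2$-cell, approximate it by a general-position map with nowhere dense image, then radially push off a missed interior point onto the attaching circle, which lies in $X$. Setting $\phi_m=\rho_m\circ r_m:(\bbhp,\bpp)\to (X,x_0)$ gives $j\circ\phi_m\simeq f\circ s_m\circ r_m=:\theta_m$ rel $\bpp$, hence $(j\circ\phi_m)_{\#}=(\theta_m)_{\#}$.

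Applying $(\theta_m)_{\#}$ to the displayed factorization then exhibits $(\theta_m)_{\#}(g)=f_{\#}\big((s_m)_{\#}(r_m)_{\#}(g)\big)$ as a product of conjugates of the elements $(\theta_m)_{\#}(\tau_i)$, so that $(\theta_m)_{\#}(g)$ lies in the normal closure of $(\theta_m)_{\#}(T)=j_{\#}\big((\phi_m)_{\#}(T)\big)$ in $\pi_1(Y,x_0)$. Feeding $\phi_m$ into the $(T,g)$-closedness of $\ker j_{\#}$ would then yield $(\theta_m)_{\#}(g)=1$ and hence $f_{\#}(g)=1$, provided that (i) $\phi_m$ satisfies $(\phi_m)_{\#}(T)\leq \ker j_{\#}$, i.e.\ $(\theta_m)_{\#}(T)=1$, and (ii) $(\theta_m)_{\#}(g)=f_{\#}(g)$.

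Provisos (i) and (ii) are where I expect the real work to lie, and both concern the discrepancy between $\theta_m=f\circ(s_m\circ r_m)$ and $f$, which is supported entirely on the high circles $C_{m+1},C_{m+2},\dots$. The plan is to control this discrepancy using that $s_m\circ r_m\to\mathrm{id}_{\bbhp}$ uniformly and that $f(C_k)\to x_0$ as $k\to\infty$ (uniform continuity of $f$ on the compact space $\bbhp$), so that the loops representing $(s_m)_{\#}(r_m)_{\#}(\tau_i)$ and $\tau_i$, and those representing $(s_m)_{\#}(r_m)_{\#}(g)$ and $g$, differ only by sub-loops shrinking into $x_0$; letting $n\to\infty$, and exploiting that $\pi_1(\bbhp,\bpp)$ embeds in the shape group $\varprojlim_k C_{\leq k}$, one would pass the finite-stage membership of $(\theta_m)_{\#}(g)$ in the normal closure of $(\theta_m)_{\#}(T)$ to the limiting statement $f_{\#}(g)\in$ normal closure of $f_{\#}(T)=1$. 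The hard part is making this limiting step rigorous with no local niceness assumed on $Y$: it is precisely the reconciliation of the finite truncations $(r_m)_{\#}T,(r_m)_{\#}g$ with the genuine elements $T,g$, and this is exactly the point at which both the hypothesis on $g$ and the accumulation of circles at the wild point $b_0$ are indispensable.
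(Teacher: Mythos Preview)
Your identification of the nontrivial direction and of the core obstruction---the wild point $b_0$ preventing a global cellular approximation---is correct, and your compression of $f\circ s_m$ into $X$ via cellular approximation on the finite wedge is fine as far as it goes. The gap is that provisos (i) and (ii) are genuinely false in general and the proposed limiting argument cannot repair them. For a fixed $m$, $(\theta_m)_{\#}=f_{\#}\circ(s_m r_m)_{\#}$ differs from $f_{\#}$ on any element with nontrivial content in the high circles $C_{m+1},C_{m+2},\dots$; there is no reason $f_{\#}$ should kill the truncations $(s_m r_m)_{\#}(\tau)$ of elements $\tau\in T$, so $(\theta_m)_{\#}(T)\neq 1$ in general and the normal closure of $(\theta_m)_{\#}(T)$ in $\pi_1(Y,x_0)$ is typically large. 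Your only conclusion at stage $m$ is that $(\theta_m)_{\#}(g)$ lies in that (nontrivial) normal closure, which says nothing about $f_{\#}(g)$. Passing to a limit does not help: $\pi_1(Y,x_0)$ carries no inverse-limit structure and nothing has been assumed about $Y$ beyond the cell attachments, so there is no mechanism by which $(\theta_m)_{\#}(g)\to f_{\#}(g)$ or by which the normal closures shrink to the trivial group.

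The paper avoids the limit entirely by splitting on the location of $f(b_0)$. If $f(b_0)$ lies in the interior of a $2$-cell, then some neighborhood of $f(b_0)$ is contained in that open disc, so for $m$ large all of $\bbh_{\geq m+1}$ maps into a contractible set and $f_{\#}$ vanishes on $\pi_1(\bbh_{\geq m+1}^{+},\bpp)$; consequently $f_{\#}\circ(s_m r_m)_{\#}=f_{\#}$ exactly, your provisos (i) and (ii) hold on the nose, and the hypothesis on $(T,g)$ gives $f_{\#}(g)=f_{\#}((r_m)_{\#}(g))\in f_{\#}(K_m)=1$ directly (the $(T,g)$-closedness of $\ker j_{\#}$ is not even invoked in this case). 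If instead $f(b_0)\in X$, choose for each $\beta$ an open $U_\beta\subset D^2_\beta$ that deformation retracts onto $\partial D^2_\beta$, and set $Z=X\cup\bigcup_\beta U_\beta$; for $m$ large $f(\bbh_{\geq m+1})\subset Z$, so the entire tail---infinitely many circles at once---is pushed into $X$ by the retraction $Z\to X$, while the finitely many low circles and the arc $\iota$ are replaced individually using surjectivity of $j_{\#}$. This yields an honest map $f':\bbhp\to X$ with $j\circ f'\simeq f$, and now the $(T,g)$-closedness of $\ker j_{\#}$ applies to $f'$ without any truncation. The idea missing from your proposal is precisely this collar-retraction trick for the tail: it replaces the unavailable limit by a single continuous homotopy.
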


\begin{proof}
Suppose $(T,g)$ satisfies the hypothesis in the statement of the lemma. The first direction is Proposition \ref{cweasy}. Suppose $\ker j_{\#}$ is $(T,g)$-closed and consider a map $f:(\bbhp,\bpp)\to (Y,x_0)$ such that $f_{\#}(T)=1$. It suffices to show $f_{\#}(g)=1$. Set $\alpha=f\circ\iota$, $\gamma_n=f\circ \ell_n$, $n\in\bbn$, and let $K_m$ denote the normal closure of $(r_{m})_{\#}(T)$ in $C_{\leq m}$.

If $f(b_0)$ lies in the interior of a 2-cell, then there exists an $m\in\bbn$ such that $f_{\#}$ vanishes on $\pi_1(\bbh_{\geq m+1}^{+},\bpp)$. By our assumption on $(T,g)$, we may choose $m$ large enough so that $(r_{m})_{\#}(g)\in K_m$. Since $\pi_1(\bbhp,\bpp)$ may be identified with the free product $C_{\leq m}\ast \pi_1(\bbh_{\geq m+1}^{+},\bpp)$, it follows that $f_{\#}\circ (r_{m})_{\#}(h)=f_{\#}(h)$ for all $h\in \pi_1(\bbhp,\bpp)$. Since $f_{\#}((r_{m})_{\#}(T))=f_{\#}(T)=1$, we have $(r_{m})_{\#}(T)\leq \ker f_{\#}$ and thus $K_m\leq \ker f_{\#}$ by the normality of $\ker f_{\#}$. Therefore, 
\[f_{\#}(g)=f_{\#}((r_{m})_{\#}(g))\in f_{\#}(K_m)=1.\]
On the other hand, suppose $f(b_0)\in X$. Since $j_{\#}$ is onto, we may find a path $\alpha ':\ui\to X$ from $x_0$ to $f(b_0)$, which is path-homotopic to $\alpha$ in $Y$. Let $U_{\beta}$ be an open set in the 2-cell $D^{2}_{\beta}$, which deformation retracts on the boundary $\partial D^{2}_{\beta}$. Let $Z=X\cup\bigcup_{\beta}U_{\beta}$ and note that there is a deformation retraction $\phi:Z\to X$. Since $Z$ is an open neighborhood of $f(b_0)$, there is an $m\in\bbn$ such that $f(\bbh_{\geq m+1})\subset Z$. For each $n=1,2,\dots,m$, there is a loop $\gamma_{n}':S^1\to X$ based at $x_0$ which is homotopic to $\gamma_n$ in $Y$. Define $f':\bbhp\to X$ by $f'\circ \iota=\alpha '$, $f'|_{\bbh_{\geq m+1}}=\phi\circ f|_{\bbh_{\geq m+1}}$, and $f'\circ \ell_n=\gamma_{n}'$ for $n=1,2,\dots,m$. Notice that $f'$ has image in $X$ and $f$ is homotopic to $j\circ f'$ in $Y$. Since $f_{\#}=(j\circ f')_{\#}$, we have $j_{\#}((f')_{\#}(T))=f_{\#}(T)=1$ and thus $(f')_{\#}(T)\leq \ker j_{\#}$. We have assumed that $\ker j_{\#}$ is $(T,g)$-closed and conclude that $(f')_{\#}(g)\in \ker j_{\#}$. Thus $f_{\#}(g)=j_{\#}((f')_{\#}(g))=1$.
\end{proof}

\begin{lemma}\label{twoclosureslemma}
The closure pairs $(C,c_{\infty})$, $(C,c_{\tau})$, and $(P,p_{\tau})$ satisfy the hypothesis of Lemma \ref{cellsandclosurelemma}.
\end{lemma}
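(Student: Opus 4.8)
The plan is to check the defining condition of Lemma~\ref{cellsandclosurelemma} for each of the three pairs by directly computing images under $(r_m)_{\#}$. Recall that $(r_m)_{\#}\colon \pi_1(\bbhp,\bpp)\to C_{\leq m}$ fixes each $c_k$ with $k\leq m$ and sends each $c_k$ with $k>m$ to the identity; since it is a homomorphism and $C_{\leq m}$ is free on $c_1,\dots,c_m$, applying $(r_m)_{\#}$ to any of the listed elements $c_\infty, c_\tau, p_n, p_\tau$ amounts to deleting every factor $\ell_k$ with index $k>m$ from the underlying (transfinite) concatenation and reading off the surviving factors, as a word in $c_1,\dots,c_m$, in their original order.

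For $(C,c_{\infty})$ and $(C,c_{\tau})$ the condition is immediate for every $m$: since $(r_m)_{\#}(C)=\langle c_1,\dots,c_m\rangle=C_{\leq m}$, the normal closure of $(r_m)_{\#}(C)$ in $C_{\leq m}$ is all of $C_{\leq m}$, which of course contains $(r_m)_{\#}(c_{\infty})=c_1c_2\cdots c_m$ and $(r_m)_{\#}(c_{\tau})$. Thus the substance lies entirely in the pair $(P,p_{\tau})$, which I would handle by passing to the quotient of $C_{\leq m}$ by the normal closure $N_m$ of $(r_m)_{\#}(P)$. Writing $m=2M$ or $m=2M+1$, one computes $(r_m)_{\#}(p_n)=c_{2n-1}c_{2n}^{-1}$ when $2n\leq m$, $(r_m)_{\#}(p_n)=c_m$ when $2n-1=m$, and $(r_m)_{\#}(p_n)=1$ otherwise. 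Hence in $C_{\leq m}/N_m$ one has $c_{2j-1}=c_{2j}=:x_j$ for $j\leq M$, together with $c_m=1$ when $m$ is odd.

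On the other hand, since $\ell_{odd}=\prod_j \ell_{2j-1}$ and $\ell_{even}=\prod_j \ell_{2j}$ are both indexed by $j$ in the dense order of $\mci(\mcc)$, deleting the factors of index $>m$ gives $(r_m)_{\#}(p_{\tau})=\big(\prod_j c_{2j-1}\big)\big(\prod_j c_{2j}\big)^{-1}$, where each product runs over the finitely many $j$ whose odd, respectively even, index is $\leq m$, taken in the dense order on $j$. The step that makes everything collapse is the observation that these two finite words become equal in $C_{\leq m}/N_m$: the $j$-th surviving factor $c_{2j-1}$ of the first word and $c_{2j}$ of the second both map to $x_j$, the products are read in the same order of $j$, and the single unpaired odd factor (the index $m$, present exactly when $m$ is odd, namely $j=M+1$) maps to $c_m=1$ and drops out. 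Therefore the image of $(r_m)_{\#}(p_{\tau})$ in $C_{\leq m}/N_m$ is $w\cdot w^{-1}=1$, so $(r_m)_{\#}(p_{\tau})\in N_m$, verifying the hypothesis for every $m$.

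The main obstacle here is bookkeeping rather than anything conceptual: one must keep careful track of the dense ordering that $\mci(\mcc)$ imposes on the surviving indices and confirm that the relations $c_{2j-1}=c_{2j}$ coming from the generators $p_n$ identify the $\ell_{odd}$ and $\ell_{even}$ halves factor-by-factor, so that the ``densely conjugated'' product telescopes to the identity once the unpaired tail factor is discarded via $c_m=1$. This pairing is precisely the feature that distinguishes $(P,p_\tau)$ from $(C,c_\tau)$, and getting it right is the only delicate point in the argument.
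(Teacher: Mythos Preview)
Your argument is correct and takes a genuinely different route from the paper's. For $(C,c_{\infty})$ and $(C,c_{\tau})$ both proofs agree. For $(P,p_{\tau})$, the paper argues by induction on $n$, introducing the notion of a \emph{basic factorization} of $(r_{2n})_{\#}(p_{\tau})$ and using the recursive structure of $\ell_{\tau}$ together with the algebraic identity
\[
w_{odd}c_{2n-1}v_{odd}v_{even}^{-1}c_{2n}^{-1}w_{even}^{-1}=\bigl(w_{odd}\bigl((c_{2n-1}c_{2n}^{-1})(c_{2n}(v_{odd}v_{even}^{-1})c_{2n}^{-1})\bigr)w_{odd}^{-1}\bigr)(w_{odd}w_{even}^{-1})
\]
to propagate membership in $K_{2n}$; in particular, only even values $m=2n$ are treated. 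Your approach instead passes to the quotient $C_{\leq m}/N_m$ and observes that the relations $c_{2j-1}=c_{2j}$ (and $c_m=1$ when $m$ is odd) identify the surviving odd and even words factor-by-factor, since both are indexed by the \emph{same} restriction of the dense order on $\mci(\mcc)$. This is more direct, handles all $m$ at once, and makes transparent why the ``densely conjugated'' product telescopes: it is exactly the factor-by-factor pairing $c_{2j-1}\leftrightarrow c_{2j}$ that the paper's induction establishes one step at a time. The paper's approach, on the other hand, yields slightly more, namely that each pair $(w_{odd}w_{even}^{-1},\,v_{odd}v_{even}^{-1})$ arising from any basic factorization already lies in $K_{2n}$, not just the full product.
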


\begin{proof}
Suppose $n\in\bbn$. For $(C,c_{\infty})$ and $(C,c_{\tau})$, we may take $m=n$ since $(r_{m})_{\#}(C)=C_{\leq m}$.

For $(P,p_{\tau})$, let $K_m$ be the normal closure of $(r_{m})_{\#}(P)$ in $C_{\leq m}$. To verify the hypothesis of Lemma \ref{cellsandclosurelemma}, will show that $(r_{2n})_{\#}(p_{\tau})\in K_{2n}$ for all $n\in\bbn$. Note that since $\bbhp_{\leq 2n-2}$ is a retract of $\bbhp_{\leq 2n}$, we may identify $K_{2n-2}$ canonically as a non-normal subgroup of $K_{2n}$.

A \textit{basic factorization} of $(r_{2n})_{\#}(p_{\tau})$ is a product $w_{odd}v_{odd}v_{even}^{-1}w_{even}^{-1}$ in $C_{\leq 2n}$ where 
\begin{enumerate}
\item $w_{odd}v_{odd}v_{even}^{-1}w_{even}^{-1}$ is already a reduced representative of $(r_{2n})_{\#}(p_{\tau})$.
\item $w_{odd},v_{odd}\in\lb c_1,c_3,\dots c_{2n-1}\rb$ and $w_{even},v_{even}\in\lb c_2,c_4,\dots c_{2n}\rb$,
\item $w_{odd}$ and $w_{even}$ have equal word length (which may be $0$),
\item $v_{odd}$ and $v_{even}$ have equal word length (which may be $0$),
 \end{enumerate}
As a necessary convention, we consider the case where $w_{odd}=w_{even}=1$ are empty and $v_{odd}v_{even}^{-1}=(r_{2n})_{\#}(p_{\tau})$ and the case where $v_{odd}=v_{even}=1$ are empty and $w_{odd}w_{even}^{-1}=(r_{2n})_{\#}(p_{\tau})$ to be distinct basic factorizations.

Recalling the structure of $p_{\tau}$, note that $(r_{2n})_{\#}(p_{\tau})$ has $n+1$ basic factorizations. We prove the following by induction on $n$: If $w_{odd}v_{odd}v_{even}^{-1}w_{even}^{-1}$ is a basic factorization of $(r_{2n})_{\#}(p_{\tau})$, then $v_{odd}v_{even}^{-1}$ and $w_{odd}w_{even}^{-1}$ are elements of $K_{2n}$.

For $n=1$, we have $(r_{2})_{\#}(p_{\tau})=[\ell_1\cdot\ell_{2}^{-}]=p_1\in (r_{2})_{\#}(P)\leq K_2$. Suppose the hypothesis holds for $n-1$. By the definition of $p_{\tau}$, there is a unique factorization \[(r_{2n})_{\#}(p_{\tau})=w_{odd}c_{2n-1}v_{odd}v_{even}^{-1}c_{2n}^{-1}w_{even}^{-1}\]
where $w_{odd}v_{odd}v_{even}^{-1}w_{even}^{-1}$ is a basic factorization of $(r_{2n-2})_{\#}(p_{\tau})$. By our induction hypothesis, we have $w_{odd}w_{even}^{-1},v_{odd}v_{even}^{-1}\in K_{2n-2}$. Since $K_{2n-2}\leq K_{2n}$, we have $w_{odd}w_{even}^{-1},v_{odd}v_{even}^{-1}\in K_{2n}$. Therefore, the equality{\footnotesize{
\[
w_{odd}c_{2n-1}v_{odd}v_{even}^{-1}c_{2n}^{-1}w_{even}^{-1} =(w_{odd}((c_{2n-1}c_{2n}^{-1})(c_{2n}(v_{odd}v_{even}^{-1})c_{2n}^{-1}))w_{odd}^{-1})(w_{odd}w_{even}^{-1})\]}}
shows that $(r_{2n})_{\#}(p_{\tau})$ an element of $K_{2n}$. This completes the induction.

Now, for any $n\in\bbn$, and using any basic factorization $ w_{odd}v_{odd}v_{even}^{-1}w_{even}^{-1}$ of $(r_{2n})_{\#}(p_{\tau})$, we see that
\[
(r_{2n})_{\#}(p_{\tau}) = w_{odd}v_{odd}v_{even}^{-1}w_{even}^{-1}= w_{odd}w_{even}^{-1}(w_{even}(v_{odd}v_{even}^{-1})w_{even}^{-1})
\]
is an element of $K_{2n}$.
\end{proof}

\section{Well-definedness of dense products in fundamental groupoids}\label{sectiongroupoids}

\begin{definition}\label{tppdef}
A space $X$ has {\it well-defined transfinite $\Pi_1$-products relative to }$H\leq \pionex$ if for every closed set $A\subseteq \ui$ containing $\{0,1\}$ and paths $\alpha,\beta:(\ui,0)\to (X,x_0)$ such that $\alpha|_A=\beta|_A$ and $[\alpha|_{[0,b]}\cdot\beta|_{[a,b]}^{-}\cdot \alpha|_{[0,a]}^{-}]\in H$ for every component $(a,b)$ of $\ui\backslash A$, we have $[\alpha\cdot \beta^{-}]\in H$. A space $X$ has {\it well-defined transfinite $\Pi_1$-products} if $X$ has well-defined transfinite $\Pi_1$-products relative to the trivial subgroup $H=1$.
\end{definition}

As noted in \cite[Remark 7.2]{BFTestMap}, in the previous definition, one only needs to consider closed nowhere dense sets $A$.

Recall that $\mcc$ is the Cantor set and $\mci(\mcc)$ is a countable dense order. For $I=(a,b)\in\mci(\mcc)$, let $C_I=\left\{(x,y)\in\bbr^2\mid y\geq 0,\left(x-\frac{a+b}{2}\right)^2+y^2=\left(\frac{b-a}{2}\right)^2\right\}$ be the semicircle whose boundary is $\{(a,0),(b,0)\}$. Let $\bbw=B\cup \bigcup_{I\in\mci(\mcc)}C_I$ where $B=[0,1]\times\{0\}$ is the \textit{base-arc} and $w_0=(0,0)$ is the basepoint (see Figure \ref{waspace}).

\begin{figure}[H]
\centering \includegraphics[height=.9in]{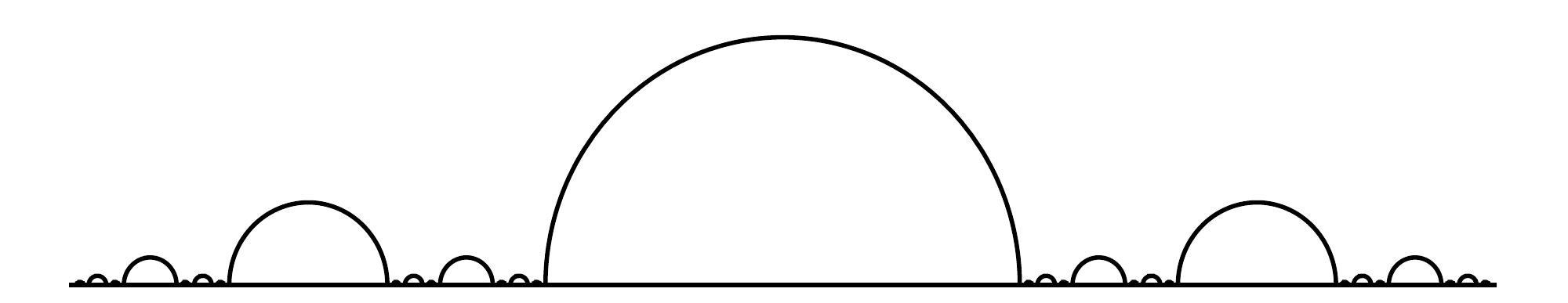}
\caption{\label{waspace}The space $\bbw$.}
\end{figure}

For $I=(a,b)\in\mci(\mcc)$, let $\lambda_I:\ui\to B$ be the path $\lambda_I(t)=(bt+a(1-t),0)$ and $\upsilon_{I}:\ui\to C_I$ be the path so that if $r:\bbw\to B$ is the projection onto the x-axis, then $r\circ\upsilon_{I}=\lambda_I$. Let $\lambda(t)=(t,0)$ and $\upsilon:\ui\to (\mcc\times \{0\})\cup\bigcup_{I\in\mci(\mcc)}C_I$ be the path such that $r\circ\upsilon=\lambda$, i.e. the respective transfinite concatenations $\upsilon=\prod_{I\in\mci(\mcc)}\upsilon_I$ and $\lambda=\prod_{I\in\mci(\mcc)}\lambda_I$.

Let $W$ be the subgroup of $\pi_1(\bbw,w_0)$ generated by the elements $w_I=[\upsilon_{[0,b]}\cdot \lambda_{[a,b]}^{-}\cdot \upsilon_{[0,a]}^{-}]$, $I=(a,b)\in\mci(\mcc)$ and let $w_{\infty}=[\upsilon\cdot\lambda^{-}]$. Although $\bbw$ is not well-pointed at $w_0$, the self-similarity of $\bbw$ ensures that $(W,w_{\infty})$ is a normal closure pair for $(\bbw,w_0)$ (see \cite[Proposition 7.4]{BFTestMap}).

\begin{proposition}\label{transfinitepathprodchar}\cite[Proposition 7.5]{BFTestMap}
$X$ has well-defined transfinite $\Pi_1$-products relative to $H\leq \pionex$ if and only if $H$ is $(\bbw,w_{\infty})$-closed.
\end{proposition}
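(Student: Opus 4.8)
The plan is to set up a dictionary between based maps $f\colon(\bbw,w_0)\to(X,x_0)$ and pairs of paths over the Cantor set $\mcc$. Given such an $f$, put $\alpha=f\circ\upsilon$ and $\beta=f\circ\lambda$. Since $r\circ\upsilon=\lambda$ and $\mcc$ carries no semicircles, we have $\upsilon|_{\mcc}=\lambda|_{\mcc}$ and hence $\alpha|_{\mcc}=\beta|_{\mcc}$; moreover, for each gap $I=(a,b)\in\mci(\mcc)$ a direct computation gives
\[
f_{\#}(w_I)=[\alpha|_{[0,b]}\cdot\beta|_{[a,b]}^{-}\cdot\alpha|_{[0,a]}^{-}]\quad\text{and}\quad f_{\#}(w_{\infty})=[\alpha\cdot\beta^{-}].
\]
Thus the hypothesis $f_{\#}(W)\le H$ is precisely the gap hypothesis of Definition \ref{tppdef} for the set $A=\mcc$, while $f_{\#}(w_{\infty})\in H$ is precisely its conclusion.

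Granting this dictionary, the implication from well-definedness to closedness is immediate: if $X$ has well-defined transfinite $\Pi_1$-products relative to $H$, then any $f$ with $f_{\#}(W)\le H$ furnishes paths $\alpha,\beta$ satisfying the gap hypothesis for $A=\mcc$, so $[\alpha\cdot\beta^{-}]=f_{\#}(w_{\infty})\in H$, and $H$ is $(\bbw,w_{\infty})$-closed. For the converse I would first dispatch the case $A=\mcc$ of Definition \ref{tppdef}. Given $\alpha,\beta$ with $\alpha|_{\mcc}=\beta|_{\mcc}$ satisfying the gap hypothesis, define $f\colon\bbw\to X$ by $f\circ\lambda=\beta$ on the base arc and $f\circ\upsilon_I=\alpha|_{\overline{I}}$ on each semicircle $C_I$; the endpoint conditions are compatible because $\alpha$ and $\beta$ agree on the gap endpoints, which lie in $\mcc$. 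This $f$ is continuous, the only delicate points being those of $\mcc$, where continuity follows because the semicircles $C_I$ shrink as $I$ approaches such a point and $\alpha,\beta$ are continuous. Now $f_{\#}(W)\le H$ by hypothesis, so $(\bbw,w_{\infty})$-closedness yields $f_{\#}(w_{\infty})=[\alpha\cdot\beta^{-}]\in H$.

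It then remains to reduce an arbitrary $A$ to the Cantor case. By the remark following Definition \ref{tppdef} we may assume $A$ is closed and nowhere dense, so $A$ is recovered from its gap order $\mci(A)$, a countable linear order that embeds as a suborder of $\mci(\mcc)\cong\bbq$. Fixing such an embedding, I would construct a monotone continuous surjection $g\colon\ui\to\ui$ which maps each gap of $\mcc$ matched to a gap $(a,b)$ of $A$ affinely onto $[a,b]$ and is constant on every unmatched gap of $\mcc$; one checks that $g(\mcc)\subseteq A$. Setting $\alpha'=\alpha\circ g$ and $\beta'=\beta\circ g$, the pair $(\alpha',\beta')$ satisfies the gap hypothesis over $A=\mcc$: over a matched gap the associated loop is a reparameterization of the corresponding loop for $(\alpha,\beta)$ and so lies in $H$, while over an unmatched gap the loop is null-homotopic because $g$ is constant there. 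The Cantor case then supplies $[\alpha'\cdot\beta'^{-}]\in H$, and since $g$ fixes $0$ and $1$ the paths $\alpha',\beta'$ are reparameterizations of $\alpha,\beta$, whence $[\alpha'\cdot\beta'^{-}]=[\alpha\cdot\beta^{-}]\in H$.

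The main obstacle is the construction of $g$ in the last step: the matched affine pieces and the collapsed unmatched gaps must be assembled into a single monotone, continuous, well-defined map, which requires that the order embedding $\mci(A)\hookrightarrow\mci(\mcc)$ be used to make the constant values fit consistently between neighboring affine pieces. Verifying continuity of the map $f$ at the points of $\mcc$ in the Cantor case is a second, more routine technical point.
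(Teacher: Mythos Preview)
The paper does not supply its own proof of this proposition; it is quoted verbatim from \cite[Proposition~7.5]{BFTestMap} as a black box. So there is nothing in the present paper to compare your argument against.

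That said, your outline is correct and is the natural proof. The dictionary between based maps $f:(\bbw,w_0)\to(X,x_0)$ and pairs $(\alpha,\beta)$ agreeing on $\mcc$ is exactly right, and it handles both the forward implication and the case $A=\mcc$ of the converse cleanly. For the reduction of a general closed nowhere dense $A$ to the Cantor case, the monotone surjection $g:\ui\to\ui$ you describe can indeed be built from any order embedding $\mci(A)\hookrightarrow\mci(\mcc)$: define $g$ affinely on the matched Cantor gaps and set $g(x)=\sup\big(\{0\}\cup\{b:(a,b)\in\mci(A),\ \sup\psi((a,b))\le x\}\big)$ elsewhere; one then checks monotonicity, continuity, $g(0)=0$, $g(1)=1$, and $g(\mcc)\subseteq A$ directly, using that $A$ is closed and nowhere dense. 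Your identification of this as the main technical point is accurate, but it is routine once written out.

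One small correction: in the last line you call $\alpha'=\alpha\circ g$ and $\beta'=\beta\circ g$ \emph{reparameterizations} of $\alpha,\beta$. They are not, since $g$ collapses intervals and is not a homeomorphism. The conclusion $[\alpha'\cdot(\beta')^{-}]=[\alpha\cdot\beta^{-}]$ is nevertheless correct, because any monotone continuous surjection $g:\ui\to\ui$ with $g(0)=0$, $g(1)=1$ is homotopic rel endpoints to the identity (straight-line homotopy), so $\alpha\circ g\simeq\alpha$ and $\beta\circ g\simeq\beta$. You should phrase it that way.
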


We now construct a normal subgroup which is $(P,p_{\tau})$-closed but not $(W,w_{\infty})$-closed.

If $\gamma:\ui\to X$ is a loop in a space, let $\langle \gamma\rangle\in H_1(X)$ denote the class of $\gamma$ in first singular homology. For each $I=(a,b)\in\mci(\mcc)$, let $K_I=C_I\cup ([a,b]\times \{0\})\subset \bbw$ and note that we may identify $H_1(K_I)$ with $\bbz$ by identifying $\langle\upsilon|_{I}\cdot \lambda|_{I}^{-}\rangle=1$. There is a natural retraction $q_I:\bbw\to K_I$ such that $q_I(x,y)=(a,0)$ for $x\leq a$ and $q_I(x,y)=(b,0)$ if $x\geq b$. In combination with the Hurewicz homomorphisms, these maps induce a homomorphism $\phi:\pi_1(\bbw,w_0)\to \prod_{I\in \mci(\mcc)}H_1(K_I)$ given by $\phi([\alpha])=(\langle q_I\circ \alpha\rangle)_{I\in\mci(\mcc)}$.

The \textit{support} of an element $g=(g_I)_{I\in\mci(\mcc)}\in \prod_{I\in \mci(\mcc)}H_1(K_I)$ is the set \[\supp(g)=\{I\in\mci(\mcc)\mid g_I\neq 0\}.\] Note that if loops $\alpha$ and $\beta$ are freely homotopic loops in $\bbw$, then $\supp((\langle q_I\circ \alpha\rangle)_{I\in\mci(\mcc)})=\supp((\langle q_I\circ \beta\rangle)_{I\in\mci(\mcc)})$. We are interested in the set \[N_0=\{[\alpha]\in\pi_1(\bbw,w_0)\mid \supp(\phi([\alpha]))\text{ is a scattered suborder of }\mci(\mcc)\},\]which contains the homotopy classes of loops that have non-zero winding number around a scattered ordering of the simple closed curves $K_I$, $I\in\mci(\mcc)$. 

\begin{proposition}\label{basicsubgroupprop}
The set \[N_0=\{[\alpha]\in\pi_1(\bbw,w_0)\mid \supp(\phi([\alpha]))\text{ is a scattered suborder of }\mci(\mcc)\}\] is a normal subgroup of $\pi_1(\bbw,w_0)$ such that 
\begin{enumerate}
\item $[\pi_1(\bbw,w_0),\pi_1(\bbw,w_0)]\leq N_0$,
\item $W\leq N_0$,
\item and $w_{\infty}\notin N_0$.
\end{enumerate}
\end{proposition}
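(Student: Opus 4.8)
The plan is to exhibit $N_0$ as the $\phi$-preimage of a subgroup of the abelian group $\prod_{I\in\mci(\mcc)}H_1(K_I)$; this will simultaneously yield that $N_0$ is normal and that it contains the commutator subgroup. Set
\[
S=\{\,g\in\textstyle\prod_{I\in\mci(\mcc)}H_1(K_I)\mid \supp(g)\text{ is a scattered suborder of }\mci(\mcc)\,\},
\]
so that $N_0=\phi^{-1}(S)$ directly from the definition. The order-theoretic fact I would invoke is that the scattered suborders of any fixed linear order form an ideal: every suborder of a scattered order is scattered, and the union of two scattered suborders is again scattered \cite{RosensteinLO}. Granting this, $S$ is a subgroup, since $\supp(0)=\varnothing$, $\supp(-g)=\supp(g)$, and $\supp(g+h)\subseteq\supp(g)\cup\supp(h)$; and because $\prod_I H_1(K_I)$ is abelian, $S$ is normal, so $N_0=\phi^{-1}(S)$ is a normal subgroup of $\pi_1(\bbw,w_0)$. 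Part (1) is then immediate: the abelian target forces $\phi$ to kill $[\pi_1(\bbw,w_0),\pi_1(\bbw,w_0)]$, and since $\varnothing$ is scattered we have $\ker\phi\le N_0$, whence $[\pi_1(\bbw,w_0),\pi_1(\bbw,w_0)]\le N_0$.

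The remaining parts reduce to two winding-number computations, both controlled by a single geometric observation: the retraction $q_J\colon\bbw\to K_J$ collapses every semicircle $C_I$ with $I\neq J$ to a point, because distinct complementary intervals of $\mcc$ are disjoint and (since $\mcc$ has no isolated points) share no endpoint. Consequently $\langle q_J\circ\alpha\rangle$ measures exactly the net number of times $\alpha$ crosses $C_J$ against the base arc below it. For part (2), fixing $I=(a,b)$ I would compute $\phi(w_I)_J$ for all $J=(c,d)$: when $J=I$ the two sub-paths of $\upsilon$ left of $a$ collapse and $q_I\circ w_I\simeq\upsilon_I\cdot\lambda_I^{-}$, giving $\phi(w_I)_I=1$; when $J\neq I$, $J$ lies wholly to one side of $I$, and either the forward and reversed passages of $\upsilon$ over $C_J$ cancel (if $d<a$) or no factor meets $C_J$ (if $c>b$), giving $\phi(w_I)_J=0$. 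Hence $\supp(\phi(w_I))=\{I\}$ is scattered, so each $w_I\in N_0$ and therefore $W\le N_0$.

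For part (3), the same bookkeeping applied to $w_{\infty}=[\upsilon\cdot\lambda^{-}]$ shows that for every $J=(c,d)$ the arc of $\upsilon$ over $[c,d]$ maps to $\upsilon_J$ while everything else collapses, so $q_J\circ w_{\infty}\simeq\upsilon_J\cdot\lambda_J^{-}$ and $\phi(w_{\infty})_J=1$. Thus $\supp(\phi(w_{\infty}))=\mci(\mcc)$, which is a dense order and hence not scattered, so $w_{\infty}\notin N_0$.

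The step I expect to require the most care is the order-theoretic ideal property of scattered orders (that a finite union of scattered suborders is scattered), since it is exactly what makes $N_0$ closed under multiplication and thus separates the roles of the generators $w_I$ from that of $w_{\infty}$; the winding-number identities themselves are routine once the collapsing behaviour of the $q_J$ is pinned down.
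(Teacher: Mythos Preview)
Your proposal is correct and follows essentially the same approach as the paper's proof: both use the ideal property of scattered suborders (closure under suborders and finite unions) together with $\supp(\phi(gh^{-1}))\subseteq\supp(\phi(g))\cup\supp(\phi(h))$ to get that $N_0$ is a subgroup, invoke abelianness of the target for normality and for (1), and compute the supports of $\phi(w_I)$ and $\phi(w_\infty)$ directly for (2) and (3). Your packaging via $N_0=\phi^{-1}(S)$ is slightly cleaner, and you supply a bit more geometric detail on why $q_J$ collapses the other semicircles, but the argument is the same.
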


\begin{proof}
Suppose $g,h\in N_0$ where $\phi(g)=(g_I)_{I\in\mci(\mcc)}$ and $\phi(h)=(h_I)_{I\in\mci(\mcc)}$. Then $\supp(\phi(g))$ and $\supp(\phi(h))$ are scattered suborders of $\mci(\mcc)$. Note that $\phi(gh^{-1})=(g_I-h_I)_{I\in\mci(\mcc)}$. The only way $g_I-h_I$ is nonzero is if one of $g_I,h_I$ is nonzero. Hence $\supp(\phi(gh^{-1}))\subseteq \supp(\phi(g))\cup \supp(\phi(h))$. Since the set of scattered suborders of any linear order is closed under both suborders and finite unions, $\supp(gh^{-1})$ is scattered. Therefore, $gh^{-1}\in N_0$, proving $N_0$ is a subgroup. If $g\in N_0$ and $h\in \pi_1(\bbw,w_0)$, then $\phi(hgh^{-1})=\phi(g)$ since $\prod_{I\in\mci(\mcc)}H_1(K_I)$ is abelian. Thus $\supp(\phi(hgh^{-1}))=\supp(\phi(g))$ is scattered, giving $hgh^{-1}\in N_0$. We conclude that $N_0$ is a normal subgroup.

Since the product $\prod_{I\in\mci(\mcc)}H_1(K_I)$ is abelian, $\ker\phi$ contains the commutator subgroup $[\pi_1(\bbw,w_0),\pi_1(\bbw,w_0)]$. Since the empty order is scattered, we have $\ker\phi=\{g\in \pi_1(\bbw,w_0)\mid \supp(\phi(g))=\emptyset\}\leq N_0$. Thus $[\pi_1(\bbw,w_0),\pi_1(\bbw,w_0)]\leq N_0$.

If $J\in \mci(\mcc)$ and $\phi(w_{J})=(g_I)_{I\in\mci(\mcc)}$, then $g_I=1$ if $I=J$ and $g_I=0$ otherwise. Since $\supp(\phi(w_I))$ contains a single element, it is a scattered order. Therefore, $w_J\in N_0$ for all $J\in\mci(\mcc)$, giving $W\leq N_0$. 

Finally, if $\phi(w_{\infty})=(g_I)_{I\in\mci(\mcc)}$, then $g_I=1$ for all $I\in\mci(\mcc)$. Since $\supp(\phi(w_{\infty}))=\mci(\mcc)$ is a dense order, we have $w_{\infty}\notin N_0$.
\end{proof}
\begin{proposition}\label{cinfclosed}
The subgroup $N_0\trianglelefteq \pi_1(\bbw,w_0)$ is $(C,c_{\infty})$-closed.
\end{proposition}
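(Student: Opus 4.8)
The plan is to verify the closure condition of Definition \ref{testmap} directly. Let $f:(\bbhp,\bpp)\to(\bbw,w_0)$ be a based map with $f_{\#}(C)\leq N_0$; since $N_0$ is a subgroup and $C=\langle c_n\mid n\in\bbn\rangle$, this is equivalent to $f_{\#}(c_n)\in N_0$ for every $n$, i.e. each $S_n:=\supp(\phi(f_{\#}(c_n)))$ is a scattered suborder of $\mci(\mcc)$. I must show that $T:=\supp(\phi(f_{\#}(c_{\infty})))$ is scattered. Write $p=f(b_0)$, set $\gamma_n=f\circ\ell_n$ and $\delta=f\circ\ell_{\infty}=\prod_{n=1}^{\infty}\gamma_n$; since $f$ is continuous and $\ell_{\infty}$ is a null concatenation, $(\gamma_n)$ is a null sequence converging to $p$. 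For each fixed $I\in\mci(\mcc)$ the assignment $[\beta]\mapsto\langle q_I\circ\beta\rangle$ is a homomorphism $\pi_1(\bbw,w_0)\to H_1(K_I)\cong\bbz$ to which conjugation by the whisker $f\circ\iota$ is invisible, so $\langle q_I\circ f_{\#}(c_{\infty})\rangle=\deg(q_I\circ\delta)$ and $\langle q_I\circ f_{\#}(c_n)\rangle=\deg(q_I\circ\gamma_n)$.

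The geometric engine is the observation that $\deg(q_I\circ\gamma_n)\neq 0$ forces $\gamma_n(\ui)\supseteq C_I$: if $\gamma_n$ omitted an interior point $z$ of the semicircular arc $C_I$, then $q_I\circ\gamma_n$ would omit $z\in K_I$ (as $q_I$ fixes $C_I$ pointwise and sends everything else to the base segment or its endpoints) and hence be nullhomotopic in $K_I\cong S^1$. Two consequences follow. First, for each fixed $I$ only finitely many $n$ satisfy $I\in S_n$, because $C_I$ has a fixed positive diameter whereas $\gamma_n(\ui)$ is eventually trapped in neighborhoods of $p$ too small to contain $C_I$; computing the winding number of the null concatenation $q_I\circ\delta=\prod_n(q_I\circ\gamma_n)$ lift-by-lift then gives the finite sum $\deg(q_I\circ\delta)=\sum_{n}\deg(q_I\circ\gamma_n)$. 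In particular $T\subseteq\bigcup_n S_n=:\mathcal S$. Since every suborder of a scattered order is scattered, it suffices to prove that $\mathcal S$ is scattered.

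The crux is to upgrade ``each $S_n$ is scattered'' to ``$\bigcup_n S_n$ is scattered'', which fails for countable unions in general and is exactly where the null-sequence hypothesis must be used. I would argue by contradiction: if $\mathcal S$ were not scattered it would contain a densely ordered suborder $D$. Viewing the elements of $D$ through the positions of their gaps in $[0,1]$, and noting that the only location at which a shrinking family of semicircles $C_I$ can accumulate is a single base-point $(c,0)=p$ (when $p$ lies on the base arc), I can choose $d_1<d_2$ in $D$ whose closed position-interval $[d_1,d_2]$ avoids this at-most-one position $c$; then $D\cap[d_1,d_2]$ is still densely ordered, hence non-scattered, while the semicircles $C_I$ with $I\subseteq[d_1,d_2]$ all lie at distance at least some $\rho>0$ from $p$. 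By the null-sequence property $\gamma_n(\ui)\subseteq B(p,\rho)$ for all large $n$, so by the geometric engine no such $\gamma_n$ traverses any $C_I$ with $I\subseteq[d_1,d_2]$; thus only finitely many $S_n$ meet $[d_1,d_2]$, whence $\mathcal S\cap[d_1,d_2]$ is a finite union of scattered orders and therefore scattered. This contradicts its containing the non-scattered order $D\cap[d_1,d_2]$. The contradiction shows $\mathcal S$, and therefore $T$, is scattered, so $f_{\#}(c_{\infty})\in N_0$. I expect this localization step to be the main obstacle: the key point is that scattered supports can only amass into a dense union by accumulating at a single point $p$, which the convergence of the null sequence then precludes away from $p$.
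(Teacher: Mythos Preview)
Your proof is correct and follows essentially the same strategy as the paper's: both arguments note that conjugation by the whisker is invisible in each $H_1(K_I)$, localize to an order-interval of $\mci(\mcc)$ lying to one side of the position of $p=f(b_0)$, use the null-sequence property to see that only finitely many $\gamma_n$ can have nonzero winding there, and derive a contradiction from the scattered hypothesis. The only cosmetic differences are that the paper first reduces to $p\in\mcc\times\{0\}$ via local contractibility and phrases the contradiction through a single product $c_1c_2\cdots c_{N-1}\in C$ whose image has non-scattered support, while you pass through the finite union $S_1\cup\cdots\cup S_N$ of scattered orders.
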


\begin{proof}
Suppose $f:(\bbhp,\bpp)\to (\bbw,w_0)$ is a based map such that $f_{\#}(C)\leq N_0$. Since $\bbw$ is one-dimensional, we may assume the path $\alpha=f\circ\iota$ and loops $\gamma_n=f\circ\ell_n$, $n\in\bbn$ are reduced. Set $\gamma=\prod_{n=1}^{\infty}\gamma_n$. We seek to show $f_{\#}(c_{\infty})=[\alpha\cdot\gamma\cdot\alpha^{-}]\in N_0$. Since $\bbw$ is locally contractible at all points except those in $\mcc\times \{0\}$, we may assume $f(b_0)=(k,0)\in \mcc\times \{0\}$. Moreover, since $\gamma$ and $\alpha\cdot\gamma\cdot\alpha^{-}$ are homologous, it is enough to show $\supp(\phi(f_{\#}(c_{\infty})))=\supp\left((\lb q_I\circ \gamma\rb)_{I\in\mci(\mcc)}\right)$ is a scattered order.

Suppose, to the contrary, that $\supp\left((\lb q_I\circ \gamma\rb)_{I\in\mci(\mcc)}\right)$ contains a dense suborder $\scrl\subseteq \mci(\mcc)$. Pick any $(a_0,b_0)\in \scrl$. If $b_0\leq k$, let $\scrl '=\{(a,b)\in\scrl\mid (a,b)<(a_0,b_0)\}$ and if $k\leq a_0$, let $\scrl '=\{(a,b)\in\scrl\mid (a_0,b_0)< (a,b)\}$. Either way, $\scrl '$ is a dense suborder of $\supp\left((\lb q_I\circ \gamma\rb)_{I\in\mci(\mcc)}\right)$. Set $\epsilon =b_0-a_0$ and notice that if $|x-k|<\epsilon$, then $x\notin \overline{\cup\{I\mid I\in\scrl '\}}$. Find integer $N>1$ such that if $n\geq N$, then the image of $\gamma_n$ lies in $U=((k-\epsilon,k+\epsilon)\times [0,1/6])\cap\bbw$. Since $\zeta=\prod_{n=N}^{\infty}\gamma_n$ has image in $U$, we have $\langle q_I\circ \zeta \rangle=0$ for all $I\in\scrl '$.

Fix $J\in\scrl '$ and recall that $\langle q_J\circ \gamma\rangle \neq 0$. Since $[\gamma]=[\gamma_1\cdot \gamma_2\cdots \gamma_{N-1}][\zeta]$, we have the following in $H_1(K_J)$:
\[
0\neq \langle q_J\circ \gamma\rangle = \langle q_J\circ (\gamma_1\cdot \gamma_2\cdots \gamma_{N-1})\rangle +\langle q_J\circ \zeta \rangle= \langle q_J\circ (\gamma_1\cdot \gamma_2\cdots \gamma_{N-1})\rangle\]
Therefore, $\scrl '$ is a dense suborder contained in \[\supp(\phi([\gamma_1\cdot \gamma_2\cdots \gamma_{N-1}]))=\supp(\phi(f_{\#}(c_1c_2\dots c_{N-1}))).\] Thus $c_1c_2\cdots c_{N-1}\in C$ but $f_{\#}(c_1c_2\dots c_{N-1})\notin N_0$; a contradiction of $f_{\#}(C)\leq N_0$.
\end{proof}

\begin{theorem}\label{counterexamplethm}
The normal subgroup $N_0$ of $\pi_1(\bbw,w_0)$ is $(P,p_{\tau})$-closed but not $(W,w_{\infty})$-closed.
\end{theorem}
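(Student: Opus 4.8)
The plan is to handle the two assertions separately, leveraging the three preceding propositions so that essentially no new computation is required. The organizing observation is that $N_0$ contains the commutator subgroup of $\pi_1(\bbw,w_0)$ by Proposition \ref{basicsubgroupprop}(1); this places us exactly in the regime where the four conditions of Proposition \ref{transfiniteproductprop} collapse to a single condition, and it is what lets us convert the already-established $(C,c_\infty)$-closedness into the desired $(P,p_\tau)$-closedness.

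For the positive direction---that $N_0$ is $(P,p_\tau)$-closed---I would argue as follows. By Proposition \ref{cinfclosed}, $N_0$ is $(C,c_\infty)$-closed, which is condition (4) of Proposition \ref{transfiniteproductprop} applied to the space $\bbw$ with basepoint $w_0$ and the subgroup $H=N_0$. Since $N_0$ contains $[\pi_1(\bbw,w_0),\pi_1(\bbw,w_0)]$, the hypothesis of the final clause of Proposition \ref{transfiniteproductprop} is satisfied, so conditions (1)--(4) are all equivalent. In particular condition (2), that $N_0$ is $(P,p_\tau)$-closed, holds. The genuinely delicate content has already been discharged in the density-versus-scatteredness argument of Proposition \ref{cinfclosed}; the present step is pure bookkeeping on top of it.

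For the negative direction---that $N_0$ is not $(W,w_\infty)$-closed---I would exhibit a single test map witnessing the failure, namely the identity map $\mathrm{id}\colon(\bbw,w_0)\to(\bbw,w_0)$, which is a based map from the test space $(\bbw,w_0)$ into $X=\bbw$ with $x_0=w_0$. Under it we have $\mathrm{id}_{\#}(W)=W\leq N_0$ by Proposition \ref{basicsubgroupprop}(2), yet $\mathrm{id}_{\#}(w_\infty)=w_\infty\notin N_0$ by Proposition \ref{basicsubgroupprop}(3). By Definition \ref{testmap} this is precisely a violation of $(W,w_\infty)$-closedness, so $N_0$ is not $(W,w_\infty)$-closed.

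The only subtlety worth flagging is that there is no real tension between the two conclusions: $(P,p_\tau)$-closedness reduces, through the commutator containment, to the $(C,c_\infty)$ condition that $N_0$ preserves scatteredness of supports, whereas $(W,w_\infty)$-closedness fails simply because $w_\infty$ has the fully dense support $\mci(\mcc)$. Consequently I expect no genuine obstacle in assembling the theorem itself; the difficulty has been front-loaded into Propositions \ref{basicsubgroupprop} and \ref{cinfclosed}, and here one only needs to cite them correctly and invoke the equivalence in Proposition \ref{transfiniteproductprop}.
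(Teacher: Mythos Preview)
Your proof is correct and follows essentially the same approach as the paper: the identity map together with Proposition \ref{basicsubgroupprop}(2),(3) witnesses the failure of $(W,w_{\infty})$-closedness, while Proposition \ref{cinfclosed}, Proposition \ref{basicsubgroupprop}(1), and the equivalence in Proposition \ref{transfiniteproductprop} yield $(P,p_{\tau})$-closedness. The only difference is the order in which you treat the two assertions.
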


\begin{proof}
In light of (2) and (3) of Proposition \ref{basicsubgroupprop}, the identity map of $\bbw$ suffices to show that $N_0$ is not $(W,w_{\infty})$-closed. According to Proposition \ref{cinfclosed} and (1) of Proposition \ref{basicsubgroupprop}, $N_0$ is $(C,c_{\infty})$-closed and contains the commutator subgroup of $\pi_1(\bbw,w_0)$. Therefore, by Proposition \ref{transfiniteproductprop}, $N_0$ is $(P,p_{\tau})$-closed.
\end{proof}

\begin{proof}[Proof of Theorem \ref{counterexamplespace}]
Consider the normal subgroup $N_0\leq \pi_1(\bbw,w_0)$ from Theorem \ref{counterexamplethm}, which is $(P,p_{\tau})$-closed but not $(W,w_{\infty})$-closed. Let $X$ be the space obtained by attaching 2-cells to $\bbw$ along a set of loops representing generators of $N_0$. According to Proposition \ref{cweasy}, the trivial subgroup of $\pi_1(X,w_0)$ is not $(W,w_{\infty})$-closed. However, by Lemmas \ref{cellsandclosurelemma} and \ref{twoclosureslemma}, the trivial subgroup of $\pi_1(X,w_0)$ is $(P,p_{\tau})$-closed. Therefore, $X$ has well-defined transfinite $\pi_1$-products but does not have well-defined transfinite $\Pi_1$-products. Although $\bbw$ is a Peano continuum, with the relative CW-topology, $X$ is not metrizable since it is not first countable. However, using a standard approach (e.g. see \cite{VZ13}), one can endow $X$ with a coarser metrizable topology without changing homotopy type to obtain a locally path-connected metric space.
\end{proof}

\begin{corollary}
If $\lb\lb W\rb\rb$ is the normal closure of $W$ in $\pi_1(\bbw,w_0)$, then $w_{\infty}\notin \cl_{P,p_{\tau}}(\lb\lb W\rb\rb)$.
\end{corollary}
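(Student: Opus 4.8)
The plan is to recognize that this Corollary follows immediately from the properties of the subgroup $N_0$ established above, combined with the defining property of the closure operator. The crucial observation is that $N_0$ is simultaneously a \emph{normal}, $(P,p_\tau)$-closed subgroup of $\pi_1(\bbw,w_0)$ that contains $W$ yet omits $w_\infty$; trapping $\cl_{P,p_\tau}(\lb\lb W\rb\rb)$ inside $N_0$ then delivers the conclusion.

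First I would verify that $\lb\lb W\rb\rb\leq N_0$. By Proposition \ref{basicsubgroupprop}(2) we have $W\leq N_0$, and $N_0$ is normal in $\pi_1(\bbw,w_0)$ by the same proposition. Since the normal closure $\lb\lb W\rb\rb$ is by definition the smallest normal subgroup containing $W$, it is contained in every normal subgroup containing $W$; in particular $\lb\lb W\rb\rb\leq N_0$. This is precisely the step where the normality of $N_0$ (as opposed to merely $W\leq N_0$) is indispensable.

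Next I would invoke Theorem \ref{counterexamplethm}, which asserts that $N_0$ is $(P,p_\tau)$-closed. Because $N_0$ is a $(P,p_\tau)$-closed subgroup containing $\lb\lb W\rb\rb$, monotonicity of the closure operator (Lemma \ref{closurepropertieslemma}(2)), together with the fact that $(P,p_\tau)$-closed subgroups are fixed by $\cl_{P,p_\tau}$ (the equivalence at the start of Lemma \ref{closurepropertieslemma}), yields
\[
\cl_{P,p_\tau}(\lb\lb W\rb\rb)\leq \cl_{P,p_\tau}(N_0)=N_0.
\]
Finally, $w_\infty\notin N_0$ by Proposition \ref{basicsubgroupprop}(3), and hence $w_\infty\notin \cl_{P,p_\tau}(\lb\lb W\rb\rb)$, as claimed.

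Since every step cites an already-established fact, there is essentially no substantive obstacle here. The only points requiring care are keeping the direction of the containment between the closure and $N_0$ straight, and recognizing that it is the normal closure $\lb\lb W\rb\rb$ — not $W$ itself — that must be bounded by $N_0$, which is exactly why the normality of $N_0$ does the decisive work.
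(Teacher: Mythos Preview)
Your proof is correct and matches the paper's intended argument: the corollary is stated without proof immediately after Theorem \ref{counterexamplethm}, and the intended reasoning is precisely that $N_0$ is a normal, $(P,p_\tau)$-closed subgroup containing $W$ but not $w_\infty$, so $\cl_{P,p_\tau}(\lb\lb W\rb\rb)\leq N_0$ and $w_\infty\notin \cl_{P,p_\tau}(\lb\lb W\rb\rb)$.
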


\section{Transfinite $\Pi_1$-products and generalized covering spaces}\label{sectioncoveringspaces}

\subsection{A brief review of generalized covering space theory}

The following definition appears in \cite{Brazcat} under the name ``$\mathbf{lpc_{0}}$-covering" and agrees with the notion of generalized regular covering in \cite{FZ07} for normal subgroups.

\begin{definition} \label{gencovdef}
A map $p:\hX\to X$ is a \textit{generalized covering map} if
\begin{enumerate}
\item $\hX$ is nonempty, path connected, and locally path connected,
\item for every path-connected, locally path-connected space $Y$, point $\hx\in\hX$, and based map $f:(Y,y)\to (X,p(\hx))$ such that $f_{\#}(\pi_1(Y,y))\leq p_{\#}(\pi_1(\hX,\hx))$, there is a unique map $\widehat{f}:(Y,y)\to (\hX,\hx)$ such that $p\circ \widehat{f}=f$.
\end{enumerate}
If $\hX$ is simply connected, we call $p$ a \textit{generalized universal covering map}.
\end{definition}

By definition, a generalized covering map $p:\hX\to X$ has the unique path-lifting property: if $\alpha,\beta:\ui\to X$ are paths with $\alpha(0)=\beta(0)$ and $p\circ \alpha=p\circ \beta$, then $\alpha=\beta$. Moreover, if $p(\hx)=x_0$, $p$ is characterized up to equivalence by the conjugacy class of the subgroup $H=p_{\#}(\pi_1(\hX,\hx))\leq \pionex$. Additionally, when such a generalized covering space exists, it is equivalent to a construction from classical covering space theory (See \cite{Spanier66}): given a subgroup $H\leq \pionex$, let $\tXh=\pxxo/\mathord{\sim}$ where $\alpha\sim \beta$ if and only if $\alpha(1)=\beta(1)$ and $[\alpha\cdot\beta^{-}]\in H$. The equivalence class of $\alpha$ is denoted $H[\alpha]$ and $\txh$ denotes the equivalence class of the constant path at $x_0$. We give $\tXh$ the topology generated by the sets $B(H[\alpha],U)=\left\{H[\alpha\cdot\epsilon]\mid\epsilon([0,1])\subseteq U\right\}$ where $U$ is an open neighborhood of $\alpha(1)$ in $X$. Let $p_H:\tXh\to X$ denote the endpoint projection map defined as $p_H(H[\alpha])=\alpha(1)$. 

\begin{lemma}\label{gencovtopologylemma}\cite[Theorem 5.11]{Brazcat}
For any subgroup $H\leq \pionex$, the following are equivalent:
\begin{enumerate}
\item $p_H$ has the unique path lifting property,
\item $p_H$ is a generalized covering map,
\item $(p_H)_{\#}(\pi_1(\tXh,\txh))=H$,
\item $X$ admits a generalized covering $p:(\hX,\hx)\to (X,x_0)$ such that $p_{\#}(\pi_1(\hX,\hx))$ $=H$.
\end{enumerate}
\end{lemma}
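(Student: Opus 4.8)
The plan is to treat conditions (1)--(3) as intrinsic properties of the endpoint projection $p_H\colon\tXh\to X$, prove them equivalent in a short cycle, and then fold in (4) using the classification of generalized coverings by their image subgroup. Throughout I would lean on two facts that hold for \emph{every} subgroup $H$, independent of any lifting hypothesis. First, $\tXh$ is path-connected and locally path-connected: any $H[\alpha]$ is joined to $\txh$ by the image of $\alpha$, and the basic sets $B(H[\alpha],U)$ with $U$ path-connected are path-connected. Second, every path $\delta$ in $X$ starting at $\alpha(1)$ admits the \emph{canonical lift} $t\mapsto H[\alpha\cdot\delta|_{[0,t]}]$ from $H[\alpha]$, which is continuous directly from the definition of the topology. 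Thus $p_H$ always has \emph{existence} of path lifts; the whole content of (1)--(3) is their \emph{uniqueness}.

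The implication (2) $\Rightarrow$ (1) is immediate from the remark following Definition \ref{gencovdef}. For (1) $\Rightarrow$ (2), assuming unique path lifting, I would verify the lifting criterion directly: given a point $\hx\in\tXh$ and $f\colon(Y,y)\to(X,p_H(\hx))$ with $Y$ path-connected and locally path-connected and $f_{\#}(\pi_1(Y,y))\leq (p_H)_{\#}(\pi_1(\tXh,\hx))$, define $\widehat{f}(y')$ to be the endpoint of the unique lift of $f\circ\eta$ for a path $\eta$ from $y$ to $y'$. Well-definedness reduces to the statement that a loop whose class lies in $(p_H)_{\#}(\pi_1(\tXh,\hx))$ lifts to a loop at $\hx$, which is exactly where unique path lifting enters (the lift must coincide with the loop realizing that class); continuity of $\widehat{f}$ is the standard argument pitting local path-connectedness of $Y$ against the whisker basis, and uniqueness of $\widehat{f}$ follows again from unique path lifting since $Y$ is path-connected.

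The equivalence (1) $\Leftrightarrow$ (3) is the technical heart. The inclusion $H\leq (p_H)_{\#}(\pi_1(\tXh,\txh))$ always holds, since a loop $\gamma$ with $[\gamma]\in H$ has canonical lift ending at $H[\gamma]=\txh$. Under (1) the reverse inclusion is easy: any loop $\widehat{\gamma}$ at $\txh$ equals the canonical lift of $\gamma=p_H\circ\widehat{\gamma}$ by uniqueness, so $\txh=\widehat{\gamma}(1)=H[\gamma]$ and $[\gamma]\in H$; this gives (1) $\Rightarrow$ (3). For (3) $\Rightarrow$ (1) I would argue contrapositively. If uniqueness fails, then after prepending a canonical lift one obtains two distinct lifts $\widehat{\delta}_1\ne\widehat{\delta}_2$ of a path $\delta$ starting at $\txh$; choosing $s$ with $\widehat{\delta}_1(s)\ne\widehat{\delta}_2(s)$ and restricting to $[0,s]$, at least one endpoint, say $\widehat{\delta}_1(s)=H[\beta]$, differs from the canonical endpoint $H[\delta|_{[0,s]}]$, so $[\delta|_{[0,s]}\cdot\beta^{-}]\notin H$. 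Concatenating $\widehat{\delta}_1|_{[0,s]}$ with the canonical lift of $\beta^{-}$ produces a loop in $\tXh$ at $\txh$ projecting to $\delta|_{[0,s]}\cdot\beta^{-}$, whence $[\delta|_{[0,s]}\cdot\beta^{-}]\in (p_H)_{\#}(\pi_1(\tXh,\txh))\setminus H$ and (3) fails.

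Finally, for (4) I would close the loop using uniqueness of generalized coverings. The implication (2) $\Rightarrow$ (4) is immediate once (3) is known, since then $p_H$ itself is a generalized covering with image $H$. For (4) $\Rightarrow$ (1), given any generalized covering $p\colon(\hX,\hx)\to(X,x_0)$ with $p_{\#}(\pi_1(\hX,\hx))=H$, I would introduce the natural comparison map $\Phi\colon\hX\to\tXh$, $\Phi(\hx')=H[p\circ\widehat{\eta}]$ for a path $\widehat{\eta}$ from $\hx$ to $\hx'$; the hypothesis $p_{\#}(\pi_1(\hX,\hx))=H$ makes $\Phi$ well-defined, and unique path lifting of $p$ makes $\Phi$ a continuous bijection with $p_H\circ\Phi=p$ and $\Phi(\hx)=\txh$. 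The hard part will be showing that $\Phi$ is a homeomorphism, i.e. that $\Phi^{-1}$ is continuous (equivalently that $\Phi$ is open): this is genuinely substantive because generalized covering maps need not be local homeomorphisms, so a lift of a small loop may wander, and every naive attempt to transfer uniqueness through $\Phi$ is circular until openness is established. This step is exactly the classification of generalized coverings by their image subgroup referenced after Definition \ref{gencovdef}. Once $\Phi$ is a homeomorphism, $\Phi_{\#}$ is an isomorphism and $(p_H)_{\#}(\pi_1(\tXh,\txh))=p_{\#}(\pi_1(\hX,\hx))=H$, yielding (3) and hence (1), which completes all the equivalences.
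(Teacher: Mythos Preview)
The paper does not supply a proof of this lemma: it is quoted verbatim as \cite[Theorem 5.11]{Brazcat}, so there is nothing in the paper to compare your argument against. Your cycle through (1)--(3) is correct and standard: canonical lifts give existence, (2)$\Rightarrow$(1) is immediate from the definition, (1)$\Rightarrow$(2) is the usual lifting-criterion argument, and your (1)$\Leftrightarrow$(3) via the contrapositive is clean. The implication (2)$\Rightarrow$(4), using (3), is also fine.

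For (4)$\Rightarrow$(1) you correctly isolate the nontrivial step: showing the comparison bijection $\Phi\colon\hX\to\tXh$ is open. You defer this to ``the classification of generalized coverings by their image subgroup referenced after Definition~\ref{gencovdef}.'' Be aware that in \cite{Brazcat} that classification and the present lemma are essentially the same result, so you are invoking the cited theorem rather than giving an independent argument. That is consistent with what the paper does (namely, cite \cite{Brazcat}), but if your goal is a self-contained proof you still owe the openness of $\Phi$. One way to organize it: define the set-theoretic inverse $\Psi\colon\tXh\to\hX$ by $\Psi(H[\alpha])=\widehat{\alpha}(1)$ using unique path lifting for $p$; verify directly that $\Psi\circ\Phi=\mathrm{id}_{\hX}$ and $\Phi\circ\Psi=\mathrm{id}_{\tXh}$; then continuity of $\Psi$ amounts to showing that for each $\hat y\in\hX$ and open $V\ni\hat y$ there is an open $U\ni p(\hat y)$ in $X$ such that every path in $U$ from $p(\hat y)$ lifts (starting at $\hat y$) to a path ending in $V$. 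This is the substantive point proved in \cite{Brazcat}, and it genuinely uses the full lifting property of the generalized covering $p$, not just unique path lifting.
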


Consequently, the existence of generalized covering maps depends entirely on whether or not the constructed map $p_H$ has the unique path-lifting property. 

\subsection{The dyadic arc space}

A pair of integers $(n,j)$ is \textit{dyadic unital} if the dyadic rational $\frac{2j-1}{2^n}$ lies in $(0,1)$. For each dyadic unital pair, let $\bbd(n,j)$ denote the upper semicircle $\left\{(x,y)\in \bbr^2\Big|\left(x-\frac{2j-1}{2^n}\right)^2+y^2=\left(\frac{1}{2^n}\right)^2\text{, }x\geq 0\right\}$. We consider the union $\bbd=B\cup \bigcup_{(n,j)} \bbd(n,j)$ over all dyadic unital pairs as a subspace of $\bbr^2$ and with basepoint $d_0=(0,0)$. Let $\ell_{n,j}:[0,1]\to \bbd(n,j)$ be the arc $\ell_{n,j}(t)=\left(\frac{t+j-1}{2^{n-1}}, \frac{1}{2^{n-1}}\sqrt{t-t^2}\right)$ from $\left(\frac{j-1}{2^{n-1}},0\right)$ to $\left(\frac{j}{2^{n-1}},0\right)$ and $\lambda_{\infty}(t)=(t,0)$ be the unit speed path on the base-arc (See Figure \ref{thespacedfig}).

\begin{figure}[H]
\centering \includegraphics[height=2in]{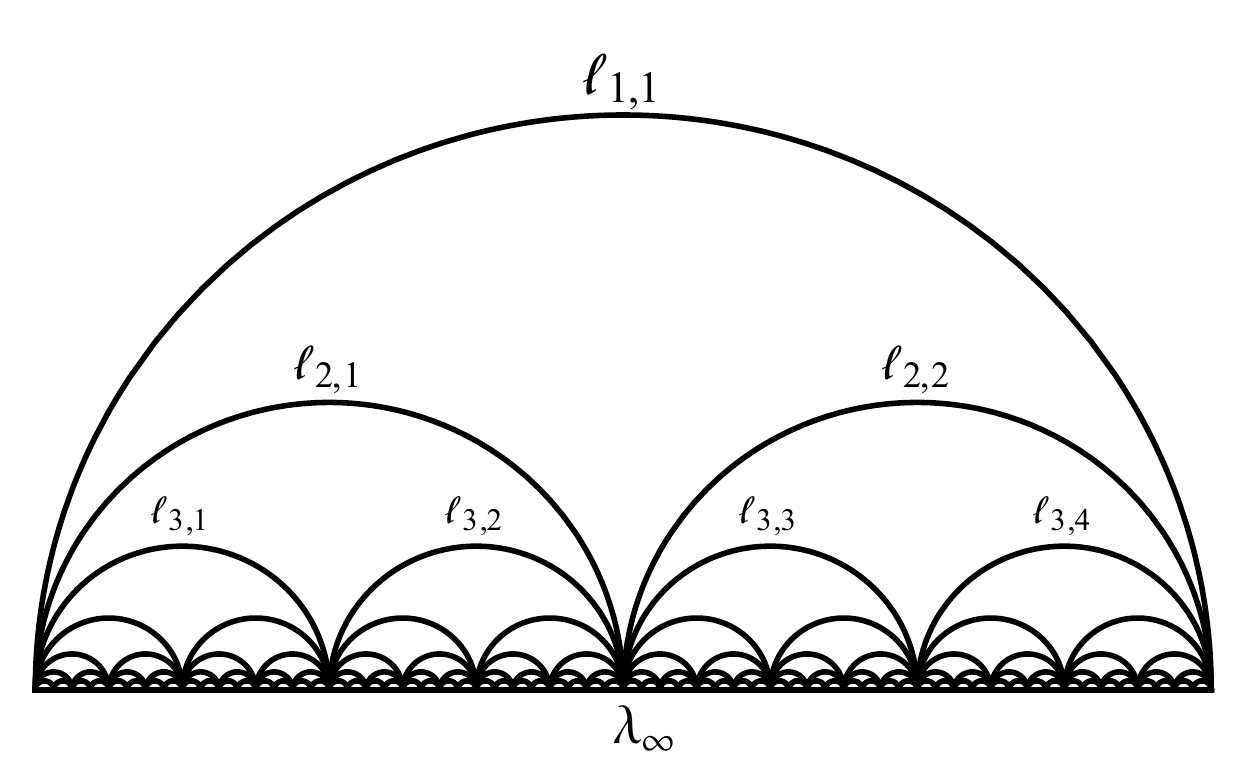}
\caption{\label{thespacedfig}The space $\bbd$ and paths $\ell_{n,j},\lambda_{\infty}$}
\end{figure}

For each $n\in\bbn$, let $E_n$ be the finite graph, which is the union of $B$ and all $\bbd(k,j)$ with $1\leq k\leq n$. The retractions $r_n:\bbd\to E_n$, which collapse $\bbd(k,j)$, $k>n$ vertically to the base-arc induce a homomorphism $\Phi:\pi_1(\bbd,d_0)\to \prod_{n\in\bbn}\pi_1(E_n,d_0)$, $\Phi([\alpha])=([r_1\circ\alpha],[r_2\circ\alpha],[r_3\circ\alpha],\dots)$. Since $\bbd$ is one-dimensional, and thus $\pi_1$-shape injective \cite{EK98onedimefund}, $\Phi$ is injective. Therefore, two paths $\alpha,\beta:\ui\to \bbd$ are path-homotopic if and only if $r_n\circ\alpha$ and $r_n\circ\beta$ are path-homotopic in $E_n$ for all $n\in\bbn$.

Recalling Example \ref{fandscex}, let $D$ denote the subgroup $F(\bbd,B)\leq\pi_1(\bbd,d_0)$. Note that $D$ consists of homotopy classes of loops based at $d_0$, which are finite concatenations of paths of the form $\ell_{n,j}$ or $\ell_{n,j}^{-}$. Let $d_{\infty}=[\ell_{1,1}\cdot\lambda_{\infty}^{-}]$; we consider the closure pair $(D,d_{\infty})$.

\begin{theorem}\cite[Theorem 4.13]{BFTestMap}\label{uplcharthm}
Suppose $H\leq \pionex$. If $p_H:\tXh\to X$ has the unique path-lifting property, then $H$ is $(D,d_{\infty})$-closed. The converse holds if $X$ is metrizable.
\end{theorem}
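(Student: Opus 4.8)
The plan is to translate the unique path-lifting (UPL) property of $p_H$ into the endpoint behaviour of lifts of the two distinguished paths $\ell_{1,1}$ (the outer arc) and $\lambda_\infty$ (the base-arc) under a test map $f\colon(\bbd,d_0)\to(X,x_0)$. Unwinding Definition \ref{testmap}, $H$ is $(D,d_\infty)$-closed if and only if every such $f$ with $f_\#(D)\le H$ satisfies $f_\#(d_\infty)\in H$. Since $d_\infty=[\ell_{1,1}\cdot\lambda_\infty^{-}]$ and both $f\circ\ell_{1,1}$ and $f\circ\lambda_\infty$ run from $x_0$ to $f(1,0)$, the condition $f_\#(d_\infty)\in H$ is, in the construction of $\tXh$, exactly the statement that the lifts of these two paths starting at $\txh$ terminate at the same point, i.e. $H[f\circ\ell_{1,1}]=H[f\circ\lambda_\infty]$. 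Thus both implications reduce to comparing the ``arc lift'' with the ``base-arc lift''.

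For the forward implication I would assume UPL, so that $p_H$ is a generalized covering map (Lemma \ref{gencovtopologylemma}). Given $f$ with $f_\#(D)\le H$, I first lift $f$ over the arcs: for each $n$, the union $T_n$ of the dyadic junction points of the base-arc with the arcs $\bbd(k,j)$, $k\le n$, is a finite graph, hence locally path-connected, and every loop in $T_n$ traverses only arcs and so represents an element of $D=F(\bbd,B)$. Therefore $f_\#(\pi_1(T_n,d_0))\le H$, and $f|_{T_n}$ lifts uniquely to a map into $\tXh$ sending $d_0$ to $\txh$; by uniqueness these lifts are compatible and assemble to a lift $\tilde f$ on the whole tree of arcs, with $\tilde f\circ\ell_{1,1}$ terminating at $H[f\circ\ell_{1,1}]$. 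The essential step is then to extend $\tilde f$ continuously across the base-arc and the Cantor-type points where infinitely many arcs accumulate; because the arcs form a null family collapsing uniformly onto the base-arc, unique path lifting should prevent the lifted arc-endpoints from spreading apart, yielding the extension. Its restriction to the base-arc is a lift of $f\circ\lambda_\infty$ from $\txh$, hence equals the standard lift by UPL, so it ends at $H[f\circ\lambda_\infty]$; evaluating the extension at $(1,0)$ gives $H[f\circ\ell_{1,1}]=H[f\circ\lambda_\infty]$ and thus $f_\#(d_\infty)\in H$. I expect this continuous extension at the accumulation points to be the main obstacle, and the only place the full strength of UPL (as opposed to a homotopically Hausdorff-type hypothesis) is needed, reflecting the dense rather than scattered nature of the dyadic subdivision.

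For the converse I would argue contrapositively: assuming $p_H$ fails UPL, I construct a map $f\colon(\bbd,d_0)\to(X,x_0)$ with $f_\#(D)\le H$ but $f_\#(d_\infty)\notin H$, contradicting $(D,d_\infty)$-closedness. Failure of UPL provides a path in $X$ from $x_0$ admitting two distinct lifts; a binary-subdivision analysis at the parameter where the two lifts first separate produces a dense, dyadically indexed family of ``detour'' loops that are individually trivial modulo $H$ at every finite stage but accumulate to a class outside $H$. Modelling the base path on $\lambda_\infty$ and the detours on the semicircular arcs $\ell_{n,j}$ defines $f$; by construction every loop meeting the base-arc finitely often, i.e. every element of $D$, maps into $H$, while $f_\#(d_\infty)\notin H$. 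The delicate point, and the reason metrizability is assumed here, is the continuity of $f$ at the limit points of the dyadic set: first countability of $X$ is exactly what allows the shrinking detours to be chosen with images converging, so that $f$ is continuous precisely where the arcs accumulate.
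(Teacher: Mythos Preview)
The paper does not prove this statement: Theorem~\ref{uplcharthm} is quoted verbatim from \cite[Theorem 4.13]{BFTestMap} and no argument is supplied here, so there is no proof in the present paper to compare your proposal against.

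That said, your outline tracks the strategy one would expect from \cite{BFTestMap}. For the forward direction, your reduction to showing $H[f\circ\ell_{1,1}]=H[f\circ\lambda_\infty]$ is exactly right, and your plan of lifting $f$ over the arc-tree and then extending across the base-arc is the natural one; you correctly flag the continuity of the extension at the dyadic accumulation points as the crux. Note that a slightly lighter variant avoids lifting all of $\bbd$: since each $\ell_{1,1}\cdot\bigl(\prod_{j}\ell_{n,j}\bigr)^{-}$ lies in $D$, the lifts of the level-$n$ arc-paths $\prod_j\ell_{n,j}$ all end at $H[f\circ\ell_{1,1}]$, and the issue becomes whether these lifts converge to the lift of $f\circ\lambda_\infty$; this is essentially the same obstacle repackaged. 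For the converse, your contrapositive construction via binary subdivision of a path witnessing the failure of UPL is again the expected approach, and your identification of metrizability as what guarantees the detour loops can be chosen small enough for continuity at the limit points is on target. Both directions in your sketch are plausible but genuinely incomplete at the points you yourself mark as delicate; a full verification would require the details from \cite{BFTestMap}.
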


\subsection{Non-equivalence for non-normal subgroups}

We confirm the necessity of the test space $\bbd$ by showing that the well-definedness of transfinite $\Pi_1$-products relative to a general subgroup $H\leq \pionex$ does not guarantee the existence of a generalized covering map corresponding to $H$.

\begin{definition}
If $X$ is a space and $A\subseteq X$, let \[Nd(X,A)=\{[\alpha]\in\pionex\mid \alpha^{-1}(A)\text{ is nowhere dense or }\alpha\text{ is constant}\}.\]
\end{definition}
Note that $Nd(X,A)$ is a subgroup of $\pionex$ and $F(X,A)\leq Sc(X,A)\leq Nd(X,A)$. If $X$ is a one-dimensional metric space, reduction of paths takes place within the image of that path. Hence, if $\alpha$ is the reduced representative of a non-trivial element of $Nd(X,A)$, then $\alpha^{-1}(A)$ is nowhere dense.

\begin{lemma}\cite[Corollary 3.12]{BFTestMap}\label{cancellationsequence}
Let $\alpha:(\ui,0)\to (X,x_0)$ be a reduced path in a one-dimensional metric space $X$, $\gamma_n:\ui\to X$ be a null-sequence of reduced loops based at $\alpha(1)$, and $\eta_n$ be a reduced representative of $[\alpha\cdot \gamma_n\cdot \alpha^{-}]$. Then for every $0<t<1$, there exists an $N$ and $0<s<1$ such that $\eta_{N}|_{[0,s]}\equiv \alpha|_{[0,t]}$.
\end{lemma}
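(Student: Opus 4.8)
The plan is to use the two structural facts available for one-dimensional metric spaces: first, the quoted theorem of Eda that every path is path-homotopic, \emph{within its own image}, to a reduced path that is unique up to reparameterization; and second, that a null-sequence of loops based at $\alpha(1)$ is eventually contained in every neighborhood of the common limit point, which here must be $x=\alpha(1)$. The strategy is to show that each reduced representative $\eta_n$ agrees with $\alpha$ on an initial segment $\alpha|_{[0,a_n]}$ (up to reparameterization) and that the cancelled terminal piece $\alpha|_{[a_n,1]}$ is confined to the same shrinking neighborhood that contains $\gamma_n$; this forces $a_n\to 1$, and the lemma follows for $n$ large enough that $a_n>t$. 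We may assume $\alpha$ is non-constant, as this is the substantive case.

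First I would record the confinement principle. Reducing the concatenation $\alpha\cdot\gamma_n\cdot\alpha^{-}$, the reduction is governed by maximal cancellation at the two junctions, since $\alpha$, $\gamma_n$, and $\alpha^{-}$ are each already reduced. At the front junction there is a maximal terminal segment $\alpha|_{[a_n,1]}$ of $\alpha$ and an initial segment $\gamma_n|_{[0,b_n]}$ of $\gamma_n$ with $\alpha(a_n)=\gamma_n(b_n)$ such that $\alpha|_{[a_n,1]}\cdot\gamma_n|_{[0,b_n]}$ is null-homotopic, so that $\alpha|_{[a_n,1]}$ is path-homotopic to $(\gamma_n|_{[0,b_n]})^{-}$. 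Since $(\gamma_n|_{[0,b_n]})^{-}$ has image inside the neighborhood $U_n$ containing $\gamma_n$, its reduced representative lies in $U_n$ by the homotopy-within-the-image property; and as $\alpha|_{[a_n,1]}$ is itself reduced and represents the same class, uniqueness of reduced representatives forces the image of $\alpha|_{[a_n,1]}$ to lie in $U_n$ as well. By the symmetric analysis at the back junction, and because $\gamma_n$ is a non-trivial reduced loop and therefore cannot cancel completely, the reduced representative has the conjugate form $\eta_n\equiv \alpha|_{[0,a_n]}\cdot\rho_n\cdot(\alpha|_{[0,a_n]})^{-}$ with $\rho_n$ a non-trivial reduced loop at $\alpha(a_n)$; in particular the initial segment of $\eta_n$ is precisely $\alpha|_{[0,a_n]}$.

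Next I would show $a_n\to 1$. Given $\epsilon>0$, the null-sequence hypothesis provides $M$ with $\gamma_n([0,1])\subseteq B(x,\epsilon)$ for all $n\ge M$, whence $U_n\subseteq B(x,\epsilon)$ and $\alpha([a_n,1])\subseteq B(x,\epsilon)$ for $n\ge M$. If $a_n$ did not converge to $1$, there would be $\delta>0$ with $a_n\le 1-\delta$ for infinitely many $n$; for such $n$ with $n\ge M$ we would obtain $\alpha([1-\delta,1])\subseteq \alpha([a_n,1])\subseteq B(x,\epsilon)$. Letting $\epsilon\to 0$ forces $\alpha([1-\delta,1])=\{x\}$, so $\alpha$ is constant on a non-degenerate interval, contradicting that $\alpha$ is reduced and non-constant. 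Hence $a_n\to 1$.

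Finally, fixing $t\in(0,1)$ and choosing $N$ with $a_N>t$, the fact that the initial segment of the reduced path $\eta_N$ is $\alpha|_{[0,a_N]}$ yields a reparameterizing homeomorphism identifying an initial subinterval of $\eta_N$ with $\alpha|_{[0,a_N]}$; restricting to the parameter $s\in(0,1)$ corresponding to $\alpha(t)$ gives $\eta_N|_{[0,s]}\equiv\alpha|_{[0,t]}$, as required. The main obstacle is the confinement step: making precise that the cancellation penetrating $\alpha$ at either junction is controlled by the neighborhood containing $\gamma_n$, which rests on uniqueness of reduced representatives together with the fact that reduction never enlarges the image. Once this is established, the convergence $a_n\to1$ and the conclusion are routine.
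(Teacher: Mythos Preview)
The paper does not prove this lemma; it is quoted from \cite[Corollary 3.12]{BFTestMap} and used as a black box in the proof of Theorem~\ref{ndiswclosed}. So there is no in-paper argument to compare against, and your proposal must be judged on its own.

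Your overall strategy is the right one: the cancelled terminal segment $\alpha|_{[a_n,1]}$ at the front junction is, by uniqueness of reduced representatives, a reparameterization of $(\gamma_n|_{[0,b_n]})^{-}$ and therefore has image inside the image of $\gamma_n$; the null-sequence hypothesis then forces $a_n\to 1$. That part, and the final extraction of $N$ and $s$, are fine.

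The gap is in your structural claim about $\eta_n$. You assert that $\eta_n\equiv \alpha|_{[0,a_n]}\cdot\rho_n\cdot(\alpha|_{[0,a_n]})^{-}$ with the \emph{same} $a_n$ on both sides, justified by ``$\gamma_n$ is a non-trivial reduced loop and therefore cannot cancel completely.'' Two problems: (i) non-triviality of $\gamma_n$ is not among the hypotheses; (ii) even granting it, your argument does not exclude the following scenario. After the front reduction one has the reduced path $P=\alpha|_{[0,a_n]}\cdot\gamma_n|_{[b_n,1]}$; reducing $P\cdot\alpha^{-}$ then cancels a terminal segment of $P$ against an initial segment of $\alpha^{-}$, and nothing you have said prevents that terminal segment from extending past the junction of $P$ and eating into $\alpha|_{[0,a_n]}$. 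Concretely, this happens precisely when $\gamma_n|_{[b_n,1]}\equiv\alpha|_{[r,1]}$ for some $r$, so that $\gamma_n\equiv(\alpha|_{[a_n,1]})^{-}\cdot\alpha|_{[r,1]}$; for $a_n\neq r$ this is a non-constant reduced loop, yet the back-cancellation can still continue beyond the junction. In that case the initial segment of $\eta_n$ is $\alpha|_{[0,p_n']}$ for some $p_n'<a_n$, and you have not shown $p_n'\to 1$. The symmetric conjugate form you wrote down is simply not what reduction produces in general; the front and back cancellation depths need not agree.

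To repair this you must either (a) argue directly that the back-cancellation cannot cross the junction of $P$ (this requires a more careful use of maximality of the front cancellation together with uniqueness of reduced subpaths), or (b) in the crossing case, separately bound $p_n'$ from below using the fact that the entire cancelled piece has image in $\alpha([e_n,1])$ for some $e_n$ and track how this constrains $p_n'$. Either route needs an additional argument beyond what you have written.
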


\begin{theorem}\label{ndiswclosed}
If $X$ is a one-dimensional metric space and $A\subseteq X$ is closed, then $Nd(X,A)$ is $(W,w_{\infty})$-closed.
\end{theorem}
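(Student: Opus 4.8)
The plan is to route everything through Proposition~\ref{transfinitepathprodchar}: rather than checking $(W,w_\infty)$-closedness directly, I would show that $X$ has well-defined transfinite $\Pi_1$-products relative to $Nd(X,A)$, which is the content of Definition~\ref{tppdef}. So fix a closed nowhere dense $S\subseteq\ui$ containing $\{0,1\}$ and paths $\alpha,\beta:(\ui,0)\to(X,x_0)$ with $\alpha|_S=\beta|_S$ such that for every component $(a,b)$ of $\ui\setminus S$ the excursion class $g_{(a,b)}=[\alpha|_{[0,b]}\cdot\beta|_{[a,b]}^{-}\cdot\alpha|_{[0,a]}^{-}]$ lies in $Nd(X,A)$; the goal is $[\alpha\cdot\beta^{-}]\in Nd(X,A)$. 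Since $X$ is one-dimensional metric I would replace $\alpha\cdot\beta^{-}$ by its reduced representative $\eta$ (unique up to reparametrization, with image inside $\mathrm{im}(\alpha)\cup\mathrm{im}(\beta)$); if $\eta$ is constant we are done, so it suffices to show $\eta^{-1}(A)$ is nowhere dense. Assuming otherwise, the closed set $\eta^{-1}(A)$ contains a nondegenerate interval $J$, and $\eta|_J$ is then a non-constant reduced subpath with $\eta(J)\subseteq A$.

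The engine of the argument is that the reduced excursions $\rho_{(a,b)}$ (reduced representatives of $\alpha|_{[a,b]}\cdot\beta|_{[a,b]}^{-}$, each based at $\alpha(a)\in\alpha(S)$) form a \emph{null family}: by uniform continuity of $\alpha,\beta$ and disjointness of the components $(a,b)$, only finitely many have diameter exceeding any fixed $\epsilon>0$. Granting for the moment that $J$ may be shrunk so that $\eta(J)$ lies at positive distance $\epsilon_1$ from $\alpha(S)$, every excursion whose image meets $\eta(J)$ must have diameter at least $\epsilon_1$ (its endpoints lie in $\alpha(S)$), so only finitely many excursions $\rho_{I_1},\dots,\rho_{I_N}$ meet $\eta(J)$ and $\eta(J)\subseteq\bigcup_j\mathrm{im}(\rho_{I_j})$. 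A Baire/pigeonhole step then produces a subinterval $J'\subseteq J$ with $\eta(J')\subseteq\mathrm{im}(\rho_{I_j})\cap A$ for a single $j$. Because $\eta|_{J'}$ and $\rho_{I_j}$ are reduced paths confined to the one-dimensional continuum $\mathrm{im}(\rho_{I_j})$, uniqueness of reduced paths forces $\rho_{I_j}$ to traverse a nondegenerate arc of $A$, whence $\rho_{I_j}^{-1}(A)$ contains an interval. Since $g_{I_j}$ is the conjugate of $[\rho_{I_j}]$ by the stem $\alpha|_{[0,a]}$ and an interior $A$-arc survives in the reduced representative of a conjugate, this contradicts $g_{I_j}\in Nd(X,A)$. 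Here the only auxiliary facts are that $Nd(X,A)$ is a subgroup and that the ``nowhere dense $A$-preimage'' condition is invariant under basepoint change.

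The \textbf{main obstacle} is justifying the reduction used in the previous paragraph: that $\eta^{-1}(\alpha(S))$ is nowhere dense, so that $J$ can be taken with $\eta(J)$ bounded away from the limit set $\alpha(S)$. The key observation is that over any region where the excursions are trivial, $\alpha$ and $\beta$ agree up to homotopy and the two halves of $\alpha\cdot\beta^{-}$ cancel under reduction, so reduced $\eta$ cannot monotonically traverse an arc of $\alpha(S)$ produced by the base alone; any arc of $\eta(J)$ genuinely inside $\alpha(S)$ would have to be supported by \emph{nontrivial} excursions accumulating at some point $\alpha(k)$, $k\in S$. This accumulation is exactly the setting of Lemma~\ref{cancellationsequence}: near $\alpha(k)$ the conjugated excursions form a null sequence whose reduced representatives preserve the stem $\alpha|_{[0,k]}$, which prevents $\eta$ from dwelling in $\alpha(S)$ over a whole interval and instead pushes any $A$-arc down into a single deep excursion, returning us to the finite case above.

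In short, the proof splits the image of the global reduced path into a part contributed by the base $\alpha(S)$ (which is cancelled and hence harmless) and a part contributed by nontrivial excursions (which, being a null family, localizes to finitely many and then to one excursion). Organizing this dichotomy cleanly, together with making the reduced-path transfer in the second paragraph precise via uniqueness of reduced representatives in one-dimensional spaces, is where essentially all of the work lies.
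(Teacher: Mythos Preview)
Your route via Proposition~\ref{transfinitepathprodchar} is different from the paper's, which verifies $(W,w_\infty)$-closedness directly with a test map $f:\bbw\to X$ and so only has to deal with $S=\mcc$. The paper never looks at the reduced representative of the full concatenation; instead it proves separately that $(f\circ\upsilon)^{-1}(A)$ is nowhere dense and then that each reduced representative $\zeta_I$ of $f\circ\lambda|_{I}$ has nowhere dense $A$-preimage, and finally assembles a specific (not reduced) representative of $f_{\#}(w_\infty)$ with nowhere dense $A$-preimage. The first step uses the \emph{density} of $\mci(\mcc)$ essentially: for each $a\in\mcc$ one picks $(a_n,b_n)\nearrow(a,b)$ and applies Lemma~\ref{cancellationsequence} to the null sequence $f_{\#}(w_{I_n})$, which forces arbitrarily long initial segments of the stem $f\circ\upsilon|_{[0,a]}$ to occur inside reduced representatives of elements of $Nd(X,A)$.

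Your handling of the ``main obstacle'' has a genuine gap. The claim that ``over any region where the excursions are trivial, the two halves of $\alpha\cdot\beta^{-}$ cancel under reduction'' is not correct: in $\alpha\cdot\beta^{-}$ the pieces $\alpha|_{[c,d]}$ and $\beta|_{[c,d]}^{-}$ are separated by $\alpha|_{[d,1]}\cdot\beta|_{[d,1]}^{-}$ and do not cancel unless that middle block is already null-homotopic. Your appeal to Lemma~\ref{cancellationsequence} is aimed at the wrong object: the lemma constrains the $A$-preimage of the \emph{stem} (which is exactly how the paper uses it), not where the global reduced path $\eta$ can dwell relative to $\alpha(S)$. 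The later transfer step is also unjustified: from $\eta(J')$ lying at positive distance from $\alpha(S)$ you only get $\eta(J')\subseteq\alpha(\overline{I_j})\cup\beta(\overline{I_j})$, not $\eta(J')\subseteq\mathrm{im}(\rho_{I_j})$, and ``uniqueness of reduced paths'' does not imply that a reduced arc with image contained in $\mathrm{im}(\rho_{I_j})$ is a reparametrized subpath of $\rho_{I_j}$. The repair is to abandon the global $\eta$ and argue, as the paper does, that the two halves $\alpha$ and (a piecewise-reduced replacement for) $\beta$ each have nowhere dense $A$-preimage; Lemma~\ref{cancellationsequence} is precisely the tool for the first half.
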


\begin{proof}
Let $f:(\bbw,w_0)\to (X,x_0)$ be a map such that $f_{\#}(W)\leq Nd(X,A)$. Fix any $(a,b)\in\mci(\mcc)$. Since $\mci(\mcc)$ has dense order type, find \[(a_1,b_1)<(a_2,b_2)<(a_3,b_3)<\cdots <(a,b)\] in $\mci(\mcc)$ such that $a_n\to a$. Let $\alpha=f\circ\upsilon|_{[0,a]}$ and $\gamma_n=f\circ(\upsilon|_{[a_n,a]}^{-}\cdot \upsilon|_{[a_n,b_n]}\cdot\lambda_{[a_n,b_n]}^{-}\cdot\upsilon|_{[a_n,a]})$. If $I_n=(a_n,b_n)$, then $[\alpha\cdot\gamma_n\cdot \alpha^{-}]=f_{\#}(w_{I_n})\in Nd(X,A)$ for each $n\in\bbn$. Therefore, if $\eta_n$ is the reduced representative of $\alpha\cdot\gamma_n\cdot \alpha^{-}$ in $X$, then $\eta_{n}^{-1}(A)$ is nowhere dense. By Lemma \ref{cancellationsequence}, for every $0<t<1$, there exists an $N$ and $0<s<1$ such that $\eta_{N}|_{[0,s]}\equiv \alpha|_{[0,t]}$. Hence for each $0<t<1$, $\alpha|_{[0,t]}^{-1}(A)$ is nowhere dense. It follows that $\alpha^{-1}(A)$ is nowhere dense. 

Recall that $(a,b)$ was arbitrary, so by applying the previous paragraph to $(c_n,d_n)\in\mci(\mcc)$ with $d_n\to 1$, we see that for each $0<t<1$, $(f\circ\upsilon|_{[0,t]})^{-1}(A)$ is nowhere dense. It follows that $(f\circ\upsilon)^{-1}(A)$ is nowhere dense.

Again, fix $I=(a,b)\in\mci(\mcc)$ and define $\alpha=f\circ\upsilon|_{[0,a]}$. Let $\beta$ be the reduced representative of the path $f\circ\upsilon|_{I}$ and $\zeta$ be the reduced representative of $f\circ\lambda|_{I}$. We have already seen that $\alpha^{-1}(A)$ and $\beta^{-1}(A)$ are nowhere dense. Since $[\alpha\cdot\beta\cdot\zeta^{-}\cdot\alpha^{-}]=f_{\#}(w_I)\in Nd(X,A)$, if $\eta$ is the reduced representative of $\alpha\cdot\beta\cdot\zeta^{-}\cdot\alpha^{-}$, then $\eta^{-1}(A)$ is nowhere dense. Hence, if $\zeta^{-1}(A)$ contains an interval $(s,t)$, then $\gamma|_{[s,t]}$ must fully cancel in the reduction from $\alpha\cdot\beta\cdot\zeta^{-}\cdot\alpha^{-}$ to $\eta$. However, this would force the existence of an interval in either $\beta^{-1}(A)$ or $(\alpha^{-})^{-1}(A)$; a contradiction. We conclude that for each $I\in \mci(\mcc)$, the reduced representative $\zeta_I$ of $f\circ\lambda|_{I}$ has the property that $\zeta|_{I}^{-1}(A)$ is nowhere dense.

Define a map $g:\bbw\to X$ so that $g\circ\upsilon=f\circ\upsilon$ and for each $I\in\mci(\mcc)$, $g\circ\lambda|_{I}=\zeta_I$. Although $g\circ\lambda$ need not be a reduced path, the uniqueness of reduced representatives in path-homotopy classes ensures that $f$ is homotopic to $g$ as a based map. Thus $g_{\#}(w_I)=f_{\#}(w_I)$ for all $I\in\mci(\mcc)$ and $g_{\#}(w_{\infty})=f_{\#}(w_{\infty})$. By the previous paragraphs, we have that $(g\circ\upsilon)^{-1}(A)$ is nowhere dense and $(g\circ\lambda|_{I})^{-1}(A)$ is nowhere dense (treated as a subspace of $\overline{I}$) for each $I\in\mci(\mcc)$. It follows that $(g\circ\lambda)^{-1}(A)$ is nowhere dense. Finally, we see that the preimage of $A$ under $(g\circ\upsilon)\cdot (g\circ\lambda)^{-}$ is nowhere dense. Thus $f_{\#}(w_{\infty})=g_{\#}(w_{\infty})=[(g\circ\upsilon)\cdot (g\circ\lambda)^{-}]\in Nd(X,A)$.
\end{proof}

\begin{example}\label{examplenonnormal}
If $\bbd$ is the dyadic arc space with base arc $B$, then clearly $D\leq Nd(\bbd,B)$, however $d_{\infty}\notin Nd(\bbd,B)$ since $\ell_{1,1}\cdot\lambda_{\infty}^{-}$ is reduced and $\lambda_{\infty}^{-1}(B)=\ui$ has non-empty interior. Therefore, $Nd(\bbd,B)$ is a non-normal subgroup of $\pi_1(\bbd,d_0)$, which is $(W,w_{\infty})$-closed but not $(D,d_{\infty})$-closed. In particular, $p_{Nd(\bbd,B)}:\tX_{Nd(\bbd,B)}\to X$ does not have the unique path-lifting property. 
\end{example}

\begin{corollary}
$d_{\infty}\notin \cl_{W,w_{\infty}}(D)$.
\end{corollary}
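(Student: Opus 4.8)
The plan is to separate $d_{\infty}$ from the closure $\cl_{W,w_{\infty}}(D)$ by exhibiting a single concrete $(W,w_{\infty})$-closed subgroup of $\pi_1(\bbd,d_0)$ that contains $D$ yet omits $d_{\infty}$, and then to let the minimality of the closure operator finish the argument. Since $\cl_{W,w_{\infty}}(D)$ is by definition the intersection of all $(W,w_{\infty})$-closed subgroups containing $D$, any such separating subgroup $K$ forces $\cl_{W,w_{\infty}}(D)\leq K$, and if $d_{\infty}\notin K$ we are done. The natural candidate, and the whole point of the preceding machinery, is $K=Nd(\bbd,B)$.

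Concretely, I would first note that $\bbd$ is a one-dimensional metric space and $B$ is closed in $\bbd$, so Theorem \ref{ndiswclosed} applies verbatim and guarantees that $Nd(\bbd,B)$ is $(W,w_{\infty})$-closed. Next, I would invoke the containment $F(X,A)\leq Nd(X,A)$ recorded just before Example \ref{examplenonnormal}; since $D=F(\bbd,B)$ by definition, this gives $D\leq Nd(\bbd,B)$. With these two facts in hand, $Nd(\bbd,B)$ is a $(W,w_{\infty})$-closed subgroup containing $D$, so it appears in the defining intersection for $\cl_{W,w_{\infty}}(D)$, whence $\cl_{W,w_{\infty}}(D)\leq Nd(\bbd,B)$. (Equivalently, one may cite monotonicity of the closure operator together with the fact that the closure of a closed subgroup is itself.)

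Finally, I would quote the computation already carried out in Example \ref{examplenonnormal}: the loop $\ell_{1,1}\cdot\lambda_{\infty}^{-}$ representing $d_{\infty}$ is reduced, and $\lambda_{\infty}^{-1}(B)=[0,1]$ fails to be nowhere dense, so $d_{\infty}\notin Nd(\bbd,B)$. Combining this with the containment $\cl_{W,w_{\infty}}(D)\leq Nd(\bbd,B)$ yields $d_{\infty}\notin\cl_{W,w_{\infty}}(D)$, as claimed. I do not expect any genuine obstacle at this stage: all of the analytic content—showing that $Nd(\bbd,B)$ is $(W,w_{\infty})$-closed via the reduced-path cancellation argument of Lemma \ref{cancellationsequence}, and checking that $d_{\infty}$ escapes it—has already been discharged in Theorem \ref{ndiswclosed} and Example \ref{examplenonnormal}. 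The only idea needed is the recognition that $Nd(\bbd,B)$ is precisely the separating subgroup, after which the corollary is a formal consequence of the lattice-theoretic characterization of $\cl_{W,w_{\infty}}$.
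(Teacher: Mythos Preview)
Your proposal is correct and is exactly the argument the paper intends: the corollary is placed immediately after Example~\ref{examplenonnormal} precisely because $Nd(\bbd,B)$ is the separating $(W,w_{\infty})$-closed subgroup containing $D$ but not $d_{\infty}$, and the paper leaves the routine deduction from the definition of $\cl_{W,w_{\infty}}$ to the reader. There is nothing to add.
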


\subsection{Equivalence for normal subgroups}

Let $f_{\mcc}:\ui\to \ui$ be the standard ternary Cantor map which is surjective, monotone, injective on $\mcc\backslash \bigcup_{(a,b)\in\mci(\mcc)}\{a,b\}$, and for each $I=(a,b)\in\mci(\mcc)$, there is a unique dyadic rational $u\in (0,1)$ such that $f_{\mcc}([a,b])=u$.

\begin{theorem}\label{mainthm}
If $H\leq \pionex$ is $(D,d_{\infty})$-closed, then $H$ is $(W,w_{\infty})$-closed. The converse holds if $H$ is normal. In particular, the closure operators $\cl_{D,d_{\infty}}$ and $\cl_{W,w_{\infty}}$ agree on normal subgroups.
\end{theorem}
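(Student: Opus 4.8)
The plan is to establish the two implications separately, relating the two test spaces $\bbd$ and $\bbw$ through the ternary Cantor map $f_{\mcc}$. For the forward direction, I would show that $\bbd$ ``sits inside'' $\bbw$ in a way compatible with the closure pairs: the Cantor map $f_{\mcc}$ collapses each $\overline{I}$, $I\in\mci(\mcc)$, to a single dyadic point, and should induce a map $\bbw\to\bbd$ (or a suitable correspondence of test maps) carrying $\upsilon\mapsto\lambda_{\infty}$-type data and $w_{\infty}\mapsto d_{\infty}$. More precisely, given a test map $f:(\bbd,d_0)\to(X,x_0)$ witnessing a failure of $(D,d_{\infty})$-closedness (i.e. $f_{\#}(D)\le H$ but $f_{\#}(d_{\infty})\notin H$), I would build a test map $\bbw\to X$ using $f_{\mcc}$ to reparametrize, sending each base-arc piece $\lambda_I$ to an appropriate $\ell_{n,j}$-arc and the transfinite concatenation $\upsilon$ to $\lambda_{\infty}$, thereby producing a witness that $H$ is not $(W,w_{\infty})$-closed. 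Since the generators $w_I$ map to classes built from finitely-supported $\ell_{n,j}$-products, $f_{\#}(W)\le H$ would follow from $f_{\#}(D)\le H$, while $w_{\infty}$ would map to $d_{\infty}$, giving the contrapositive.

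For the converse under normality, I would use the characterizations already available: by Theorem \ref{uplcharthm} and Lemma \ref{gencovtopologylemma}, $(D,d_{\infty})$-closedness for metrizable $X$ is equivalent to unique path lifting for $p_H$, hence to the existence of a generalized covering. Meanwhile $(W,w_{\infty})$-closedness is, by Proposition \ref{transfinitepathprodchar}, exactly the well-definedness of transfinite $\Pi_1$-products relative to $H$. The cleanest route is likely to argue directly on test maps: suppose $H$ is normal and $(W,w_{\infty})$-closed, and let $f:(\bbd,d_0)\to(X,x_0)$ satisfy $f_{\#}(D)\le H$; I want $f_{\#}(d_{\infty})\in H$. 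Here I would exploit the self-similarity of $\bbd$ — each subarc of $B$ between consecutive dyadic points supports a scaled copy of $\bbd$ — together with normality of $H$ to reduce an arbitrary transfinite cancellation to the dense/scattered dichotomy encoded by $\mci(\mcc)$, transporting the hypothesis across $f_{\mcc}$ so that a $(W,w_{\infty})$-witness in the $\bbw$-picture yields the desired membership $f_{\#}(d_{\infty})\in H$.

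The main obstacle will be the converse direction, and specifically the role of normality. The structural asymmetry is genuine: Example \ref{examplenonnormal} exhibits $Nd(\bbd,B)$ as a non-normal subgroup that is $(W,w_{\infty})$-closed but not $(D,d_{\infty})$-closed, so any proof must use normality essentially and cannot be a formal manipulation of test maps alone. The difficulty is that $\bbd$ encodes \emph{finite} concatenations of arcs (the subgroup $D=F(\bbd,B)$) whereas $\bbw$ is built over the \emph{dense} order $\mci(\mcc)$; bridging a finitary generating condition to a statement about the single element $d_{\infty}$ (whose representative meets the base arc on a full interval) requires controlling how dense cancellations interact with $H$. I expect normality to enter precisely when converting between path-conjugates $H^{\alpha}$ at the varying endpoints along $\upsilon$ and $\lambda$, allowing the closure condition to be checked ``pointwise'' along the Cantor set rather than only at the basepoint. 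I would anticipate that the hardest technical step is verifying that the Cantor-map reparametrization respects reduced representatives and homotopy classes simultaneously in both directions, so that membership and non-membership transfer faithfully between the two closure pairs.
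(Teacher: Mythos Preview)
Your two paragraphs have the implications reversed. The contrapositive you describe in the first paragraph---``from a failure of $(D,d_{\infty})$-closedness, produce a failure of $(W,w_{\infty})$-closedness''---is the contrapositive of the \emph{converse} ($(W,w_{\infty})$-closed $\Rightarrow$ $(D,d_{\infty})$-closed), not of the forward direction. The forward direction is handled in the paper via a retraction $r:\bbd\to\bbw$ with $r_{\#}(D)\leq W$ and $r_{\#}(d_{\infty})=w_{\infty}$ (so $\bbw$ sits inside $\bbd$, not the other way around); the substantive work is the converse, proved via a map $f:\bbw\to\bbd$ built from $f_{\mcc}$---essentially the map you are sketching, but placed in the wrong direction.

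The genuine gap is your claim that ``the generators $w_I$ map to classes built from finitely-supported $\ell_{n,j}$-products,'' i.e.\ land in $D$. Under any continuous $f:\bbw\to\bbd$ extending $f_{\mcc}$ on the base arc, one gets $f_{\#}(w_I)=[\alpha\cdot(\text{finite }\ell\text{-word})\cdot\alpha^{-}]$ where $\alpha=f\circ\upsilon|_{[0,a]}$ is a \emph{transfinite} concatenation (its domain meets $\mcc$ in a perfect set), so $f_{\#}(w_I)\notin D$ in general---it is only \emph{conjugate} to an element of $D$. This is precisely where normality enters in the paper: writing $f_{\#}(w_I)=k^{-1}(kf_{\#}(w_I)k^{-1})k$ with $kf_{\#}(w_I)k^{-1}\in D$, one needs $H\trianglelefteq\pionex$ to conclude $g_{\#}(f_{\#}(w_I))\in g_{\#}(k)^{-1}Hg_{\#}(k)=H$. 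Your argument as written would establish the converse without normality, contradicting Example~\ref{examplenonnormal}. (Separately, your specific assignment $\upsilon\mapsto\lambda_{\infty}$, $\lambda_I\mapsto\ell_{n,j}$ cannot be continuous, since the endpoints of $\ell_{n,j}$ are distinct dyadic points while $C_I$ would collapse to a single one; the paper instead takes $\lambda\mapsto\lambda_{\infty}$ and $\upsilon_I\mapsto\ell_{n+1,2j-1}^{-}\cdot\ell_{n,j}\cdot\ell_{n+1,2j}^{-}$, and then uses the $\pi_1$-shape injectivity of $\bbd$ to verify $f_{\#}(w_{\infty})=d_{\infty}$.)
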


\begin{proof}
The first statement is (1) of \cite[Proposition 7.6]{BFTestMap}. For the partial converse, suppose $H$ is a $(W,w_{\infty})$-closed normal subgroup of $\pionex$. Let $g:(\bbd,d_0)\to(X,x_0)$ be a map such that $g_{\#}(D)\leq H$. We will check that $g_{\#}(d_{\infty})\in H$.

Define a map $f:\bbw\to\bbd$ as follows: set $f(s,0)=(f_{\mcc}(s),0)$ where $f_{\mcc}$ is the ternary Cantor map. For each dyadic rational $\frac{2j-1}{2^n}\in (0,1)$, there is a unique $(a,b)\in\mci(\mcc)$ such that $f_{\mcc}([a,b])=\frac{2j-1}{2^n}$. Using this correspondence, we complete the definition of $f$ by setting $f\circ\upsilon|_{[a,b]}=\gamma_{n,j}$ where $\gamma_{n,j}=\ell_{n+1,2j-1}^{-}\cdot \ell_{n,j}\cdot \ell_{n+1,2j}^{-}$. The function $f$ is clearly well-defined. 

For continuity, it suffices to show $f$ is continuous at each $(c,0)\in \mcc\times \{0\}$. First, notice that if $I=(a,b)\in\mci(\mcc)$ has diameter $\frac{1}{3^n}$, then $f(C_I\cup ([a,b]\times \{0\}))$ has diameter $\frac{1}{2^{n-1}}$. Let $n\in\bbn$. By the continuity of $f_{\mcc}$, there exists an $m\in\bbn$ such that if $|x-c|<\frac{1}{3^m}$, then $|f_{\mcc}(x)-f_{\mcc}(c)|<\frac{1}{2^{n-1}}$. Now $U=\{(x,y)\in\bbw\mid |x-c|<\frac{1}{2(3^m)}\}$ is an open neighborhood of $(c,0)$ in $\bbw$ and if $(x,y)\in U$, then $d((x,y),(c,0))\leq \frac{1}{2(3^m)}+|x-c|<\frac{1}{3^m}$ in $\bbw$ and thus 
\begin{eqnarray*}
d(f(x,y),f(c,0)) &\leq& d(f(x,y),f(x,0))+d(f(x,0),f(c,0))\\
&=& d(f(x,y),f(x,0))+|f_{\mcc}(x)-f_{\mcc}(c)|\\
&<& \frac{1}{2^{n-1}}+\frac{1}{2^{n-1}}=\frac{1}{2^n}.
\end{eqnarray*}
This verifies the continuity of $f$.

Fix $I=(a,b)\in\mci(\mcc)$ so that $f_{\mcc}([a,b])=\frac{2j-1}{2^n}=u$. Note that $\alpha=f\circ\upsilon|_{[0,a]}$ is a path in $\bbd$ from $d_0$ to $(u,0)$. Since $u$ is a dyadic rational, there exists a path $\delta_u:\ui\to \bbd$ from $d_0$ to $(u,0)$ which is a finite concatenation of paths of the form $\ell_{n,j}$ or $\ell_{n,j}^{-}$.

Note that $f_{\#}(w_I)=[\alpha\cdot \ell_{n+1,2j-1}^{-}\cdot \ell_{n,j}\cdot \ell_{n+1,2j}^{-}\cdot \alpha^{-}]$. Let $k=[\delta_{u}\cdot \alpha^{-}]\in\pi_1(\bbd,d_0)$ and observe that 
\[kf_{\#}(w_I)k^{-1}=[\delta_{u}\cdot \ell_{n+1,2j-1}^{-}\cdot \ell_{n,j}\cdot \ell_{n+1,2j}^{-}\cdot \delta_{u}^{-}]\in D.\]
Since $g_{\#}(w_I)\in H$ and $H$ is normal, we have
\begin{eqnarray*}
(g\circ f)_{\#}(w_I) &=& g_{\#}(k)^{-1}g_{\#}(kf_{\#}(w_I)k^{-1})g_{\#}(k)\\
&\in & g_{\#}(k)^{-1}g_{\#}(D)g_{\#}(k)\\
& \leq & g_{\#}(k)^{-1}Hg_{\#}(k)\\
&=& H.
\end{eqnarray*}
Since $(g\circ f)_{\#}(W)\leq H$ and $H$ is $(W,w_{\infty})$-closed, we have $(g\circ f)_{\#}(w_{\infty})\in H$.

To finish the proof that $g_{\#}(d_{\infty})\in H$, it suffices to show that $f_{\#}(w_{\infty})=[(f\circ\upsilon)\cdot(f\circ\lambda)^{-}]$ is equal to $d_{\infty}=[\ell_{1,1}\cdot \lambda_{\infty}^{-}]$. Since $f\circ\lambda=\lambda_{\infty}\circ f_{\mcc}$, the path $f\circ\lambda$ is path-homotopic to $\lambda_{\infty}$. Therefore, it suffices to show that $f\circ\upsilon$ and $\ell_{1,1}$ are path-homotopic.

Recall the retraction $r_n:\bbd\to E_n$ and consider the projections $r_n\circ f\circ\upsilon$. Using only homotopies that delete constant subpaths, we have:
\begin{itemize}
\item[] $r_1\circ f\circ\upsilon\simeq \ell_{1,1}$
\item[] $r_2\circ f\circ\upsilon\simeq \ell_{2,1}\cdot \gamma_{1,1}\cdot \ell_{2,2}$
\item[] $r_3\circ f\circ\upsilon\simeq \ell_{3,1}\cdot \gamma_{2,1}\cdot\ell_{3,2}\cdot \gamma_{1,1}\cdot \ell_{3,3}\cdot\gamma_{2,2} \cdot \ell_{3,4}$
\item[] $\cdots$
\end{itemize}
Since each path on the right reduces to $\ell_{1,1}$, the projection $r_n\circ \ell_{1,1}$ is path-homotopic to $r_n\circ f\circ\upsilon$ for all $n$. By the $\pi_1$-shape injectivity of $\bbd$, $f\circ\upsilon$ is path-homotopic to $\ell_{1,1}$.

The final statement of the theorem now follows from \cite[Corollary 2.8]{BFTestMap}.
\end{proof}

Combining Theorem \ref{mainthm} with the characterizations in Proposition \ref{transfinitepathprodchar} and Theorem \ref{uplcharthm}, we obtain the following.

\begin{theorem}\label{mainthm1general}
If $p_H:\tX_H\to X$ has the unique path-lifting property, then $X$ has well-defined transfinite $\Pi_1$-products rel. $H$. The converse holds if $X$ is metrizable and $H$ is normal.
\end{theorem}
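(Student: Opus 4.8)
The plan is to recognize that this theorem is a direct concatenation of three characterizations already in hand, so no new geometric construction is required; the content is entirely in tracking which hypothesis is consumed by which implication. I would first recall the three links in the chain: Proposition \ref{transfinitepathprodchar} identifies well-definedness of transfinite $\Pi_1$-products relative to $H$ with $H$ being $(\bbw,w_{\infty})$-closed; Theorem \ref{uplcharthm} relates the unique path-lifting property of $p_H$ to $H$ being $(D,d_{\infty})$-closed; and Theorem \ref{mainthm} compares $(D,d_{\infty})$-closedness with $(W,w_{\infty})$-closedness.

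For the forward implication I would chain these as follows. If $p_H$ has the unique path-lifting property, then $H$ is $(D,d_{\infty})$-closed by the unconditional direction of Theorem \ref{uplcharthm}; then $H$ is $(W,w_{\infty})$-closed by the first (also unconditional) statement of Theorem \ref{mainthm}; and finally Proposition \ref{transfinitepathprodchar} yields well-definedness of transfinite $\Pi_1$-products relative to $H$. I would emphasize that this direction uses \emph{neither} metrizability nor normality.

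For the converse, assuming $X$ metrizable and $H$ normal, I would simply reverse the chain: well-definedness gives $(W,w_{\infty})$-closedness by Proposition \ref{transfinitepathprodchar}; $(W,w_{\infty})$-closedness gives $(D,d_{\infty})$-closedness by the partial converse of Theorem \ref{mainthm}, which is exactly where the normality of $H$ is used; and $(D,d_{\infty})$-closedness gives the unique path-lifting property of $p_H$ by the converse half of Theorem \ref{uplcharthm}, which is exactly where the metrizability of $X$ is used.

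Since every arrow is supplied by a previously established result, there is no genuine obstacle in this statement itself; the substantive difficulty lives upstream in Theorem \ref{mainthm}, whose converse rests on the ternary Cantor map $f_{\mcc}$ and the $\pi_1$-shape injectivity of $\bbd$ used to check $f\circ\upsilon\simeq\ell_{1,1}$. The only care needed here is the bookkeeping observation that each extra hypothesis is invoked precisely once (metrizability in Theorem \ref{uplcharthm}, normality in Theorem \ref{mainthm}), so that the forward direction indeed requires neither.
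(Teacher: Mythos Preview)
Your proposal is correct and matches the paper's approach exactly: the paper simply states that the theorem follows by combining Theorem \ref{mainthm} with Proposition \ref{transfinitepathprodchar} and Theorem \ref{uplcharthm}, which is precisely the chain you spell out, with normality consumed by the converse of Theorem \ref{mainthm} and metrizability by the converse of Theorem \ref{uplcharthm}.
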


Theorem \ref{mainthm1} is the case $H=1$ of Theorem \ref{mainthm1general}.

\begin{corollary}
If $\lb\lb D\rb\rb$ is the normal closure of $D$ in $\pi_1(\bbd,d_0)$, then $d_{\infty}\in \cl_{W,w_{\infty}}(\lb\lb D\rb\rb)$.
\end{corollary}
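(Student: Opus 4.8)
The plan is to reduce the statement to the agreement of the two closure operators on normal subgroups established in Theorem \ref{mainthm}. First I would observe that $\lb\lb D\rb\rb$, being a normal closure, is a normal subgroup of $\pi_1(\bbd,d_0)$. Hence the final clause of Theorem \ref{mainthm} applies and yields the equality $\cl_{W,w_{\infty}}(\lb\lb D\rb\rb)=\cl_{D,d_{\infty}}(\lb\lb D\rb\rb)$. This swap is the crucial move. The only general relationship between the two operators available without normality runs in the wrong direction: the first statement of Theorem \ref{mainthm} says every $(D,d_{\infty})$-closed subgroup is $(W,w_{\infty})$-closed, which forces $\cl_{W,w_{\infty}}(H)\leq \cl_{D,d_{\infty}}(H)$ in general and therefore cannot by itself place $d_{\infty}$ \emph{inside} the $\bbw$-closure. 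It is precisely the normality of $\lb\lb D\rb\rb$ that upgrades this inequality to an equality and lets us transfer membership from the $\bbd$-closure back to the $\bbw$-closure. The preceding corollary, namely $d_{\infty}\notin \cl_{W,w_{\infty}}(D)$ for the non-normal subgroup $D$, confirms that passing to the normal closure is genuinely essential here.

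It then remains to show $d_{\infty}\in \cl_{D,d_{\infty}}(\lb\lb D\rb\rb)$. Since $D\leq \lb\lb D\rb\rb$, monotonicity (Lemma \ref{closurepropertieslemma}(2)) reduces this to the claim $d_{\infty}\in \cl_{D,d_{\infty}}(D)$. For the latter I would use the identity map $\mathrm{id}:(\bbd,d_0)\to(\bbd,d_0)$ as a test map in the sense of Definition \ref{testmap}: any $(D,d_{\infty})$-closed subgroup $K$ with $D\leq K$ satisfies $\mathrm{id}_{\#}(D)=D\leq K$, and hence $\mathrm{id}_{\#}(d_{\infty})=d_{\infty}\in K$. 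Since $\cl_{D,d_{\infty}}(D)$ is by definition the intersection of all such $K$, we conclude $d_{\infty}\in \cl_{D,d_{\infty}}(D)\leq \cl_{D,d_{\infty}}(\lb\lb D\rb\rb)$.

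Chaining the two steps gives $d_{\infty}\in \cl_{D,d_{\infty}}(\lb\lb D\rb\rb)=\cl_{W,w_{\infty}}(\lb\lb D\rb\rb)$, which is the desired conclusion. I do not anticipate a serious obstacle: once Theorem \ref{mainthm} is available, the entire argument is formal, resting only on the normality of the normal closure, the monotonicity of closure operators, and the elementary identity-map observation. The single point worth emphasizing in the write-up is that the normality hypothesis of Theorem \ref{mainthm} is indispensable, since the general direction of the inequality between $\cl_{W,w_{\infty}}$ and $\cl_{D,d_{\infty}}$ would otherwise be of no use.
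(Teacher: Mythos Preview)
Your proposal is correct and follows essentially the same approach as the paper's proof: show $d_{\infty}\in \cl_{D,d_{\infty}}(D)\leq \cl_{D,d_{\infty}}(\lb\lb D\rb\rb)$ and then invoke the equality $\cl_{D,d_{\infty}}(\lb\lb D\rb\rb)=\cl_{W,w_{\infty}}(\lb\lb D\rb\rb)$ from Theorem~\ref{mainthm} for the normal subgroup $\lb\lb D\rb\rb$. The paper's version is simply terser, omitting the identity-map justification for $d_{\infty}\in \cl_{D,d_{\infty}}(D)$ and the surrounding commentary on why normality is needed.
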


\begin{proof}
$d_{\infty}\in \cl_{D,d_{\infty}}(D)\leq \cl_{D,d_{\infty}}(\lb\lb D\rb\rb)$ and, by Theorem \ref{mainthm}, $\cl_{D,d_{\infty}}(\lb\lb D\rb\rb)=\cl_{W,w_{\infty}}(\lb\lb D\rb\rb)$.
\end{proof}

\begin{corollary}
The normal closure $\lb\lb W\rb\rb$ of $W$ in $\pi_1(\bbw,w_0)$ is $(W,w_{\infty})$-dense in the sense that $\cl_{W,w_{\infty}}(\lb\lb W\rb\rb)=\pi_1(\bbw,w_0)$
\end{corollary}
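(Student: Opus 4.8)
The plan is to show directly that every element of $\pi_1(\bbw,w_0)$ lies in $\hat H:=\cl_{W,w_{\infty}}(\lb\lb W\rb\rb)$. Since $\hat H$ is $(W,w_{\infty})$-closed and contains the normal subgroup $\lb\lb W\rb\rb$, Proposition \ref{transfinitepathprodchar} tells us that $\bbw$ has well-defined transfinite $\Pi_1$-products relative to $\hat H$; this is essentially the only property of $\hat H$ I will use. Given an arbitrary class $g\in\pi_1(\bbw,w_0)$, I would represent it by a reduced loop $\mu$ based at $w_0$ (reduced representatives exist because $\bbw$ is a one-dimensional metric space \cite{EdaSpatial}) and then feed the pair $\alpha=\mu$, $\beta=r\circ\mu$ into the defining condition of Definition \ref{tppdef}, where $r:\bbw\to B$ is the projection onto the base-arc and $A=\mu^{-1}(B)$. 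The set $A$ is closed and contains $\{0,1\}$, and $\alpha|_A=\beta|_A$ since $\mu$ takes values in $B$ there.

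The heart of the argument is to identify the component differences $[\alpha|_{[0,b]}\cdot\beta|_{[a,b]}^{-}\cdot\alpha|_{[0,a]}^{-}]$ over the components $(a,b)$ of $\ui\setminus A$. On such a component $\mu$ stays off the base-arc, hence inside the interior of a single semicircle $C_I$, since the interiors of distinct $C_I$ are separated by $B$. Because $\mu$ is reduced and $C_I$ is an arc meeting $B$ only at its two endpoints, the excursion $\mu|_{[a,b]}$ must begin and end at the \emph{distinct} endpoints of $C_I$ (returning to the same endpoint would create cancellation, violating reducedness), and a reduced path in an arc between its two endpoints traverses the whole arc; thus $\mu|_{[a,b]}\equiv\upsilon_I^{\pm1}$. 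I would then compute that the resulting component difference is precisely a $\pi_1(\bbw,w_0)$-conjugate of $w_I^{\pm1}$, the conjugating loop being $\mu|_{[0,a]}$ followed by the reverse of the canonical $\upsilon$-path from $w_0$ to $\mu(a)$, and therefore lies in $\lb\lb W\rb\rb\subseteq\hat H$. Verifying that each reduced excursion off $B$ is a \emph{full} semicircle traversal, and correctly pinning down the conjugating element so that the difference is genuinely a conjugate of a generator of $W$, is the step I expect to demand the most care.

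With every component difference in $\hat H$ and $\alpha|_A=\beta|_A$, the well-definedness of transfinite $\Pi_1$-products relative to $\hat H$ yields $[\alpha\cdot\beta^{-}]\in\hat H$. Finally, $\beta=r\circ\mu$ is a loop in the contractible base-arc $B$, so $[\beta]=1$ and hence $[\alpha\cdot\beta^{-}]=[\mu]=g$. As $g$ was arbitrary, $\hat H=\pi_1(\bbw,w_0)$, which is the asserted $(W,w_{\infty})$-density. I would remark that this argument uses only that $\lb\lb W\rb\rb$ contains every conjugate of each $w_I$, so neither the self-similarity of $\bbw$ nor any appeal to self-maps $\bbw\to\bbw$ is required.
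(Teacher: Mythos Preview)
Your argument is correct and takes a genuinely different route from the paper's. The paper deduces the corollary indirectly: it invokes the cited fact that $D$ is $(D,d_{\infty})$-dense in $\pi_1(\bbd,d_0)$, pushes this forward along a retraction $r:\bbd\to\bbw$ from \cite{BFTestMap} satisfying $r_{\#}(D)\leq W$ and $r_{\#}(d_{\infty})=w_{\infty}$, and then appeals to Theorem~\ref{mainthm} to convert the resulting $(D,d_{\infty})$-closure into the desired $(W,w_{\infty})$-closure. Your proof, by contrast, works entirely inside $\bbw$: you use Proposition~\ref{transfinitepathprodchar} to translate $(W,w_{\infty})$-closedness of $\hat H$ into the $\Pi_1$-product condition, then feed an arbitrary reduced loop $\mu$ and its base-arc projection $r\circ\mu$ into Definition~\ref{tppdef}. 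The verification that each excursion of a reduced $\mu$ off $B$ is a full monotone traversal of a single semicircle $C_I$ is sound (injectivity on such an excursion follows from reducedness in the arc $C_I$, and a continuous injection from a closed interval hitting both endpoints of an arc is a reparametrized traversal), and the component difference is indeed a $\pi_1$-conjugate of $w_I^{\pm 1}$.

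What each approach buys: the paper's route is short once Theorem~\ref{mainthm} and the cited results on $\bbd$ are in hand, and it illustrates how the $\bbd$- and $\bbw$-theories interlock. Your route is more elementary and self-contained---it avoids $\bbd$, the Cantor map, and Theorem~\ref{mainthm} altogether---and makes transparent the underlying geometric reason for density: every reduced loop in $\bbw$ decomposes, over the components of its base-arc preimage, into a transfinite product of conjugates of the generators $w_I$.
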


\begin{proof}
By \cite[Lemma 4.1]{BFTestMap}, $D$ is $(D,d_{\infty})$-dense, i.e. $\cl_{D,d_{\infty}}(D)=\pi_1(\bbd,d_0)$. According to the proof of \cite[Proposition 7.6]{BFTestMap}, $\bbw$ may be identified as a subspace of $\bbd$ such that there is a retraction $r:\bbd\to \bbw$ satisfying $r_{\#}(D)\leq W$ and $r_{\#}(d_{\infty})=w_{\infty}$. Since $r_{\#}$ is a retraction of groups, it must be onto. Hence,
\begin{eqnarray*}
\pi_1(\bbw,w_0) &=& r_{\#}(\pi_1(\bbd,d_0))\\
&=& r_{\#}(\cl_{D,d_{\infty}}(D))\\
&\leq& \cl_{D,d_{\infty}}(r_{\#}(D))\\
&\leq & \cl_{D,d_{\infty}}(W)\\
&\leq & \cl_{D,d_{\infty}}(\lb\lb W\rb\rb)\\
&=& \cl_{W,w_{\infty}}(\lb\lb W\rb\rb)
\end{eqnarray*}
where the last equality follows from Theorem \ref{mainthm}.
\end{proof}

\begin{remark}
In combination with Lemma \ref{closurepropertieslemma}, the ``density" mentioned in the previous corollary implies that a normal subgroup $N\trianglelefteq \pionex$ is $(W,w_{\infty})$-closed if and only if for every map $f:(\bbw,w_0)\to (X,x_0)$ such that $f_{\#}(W)\leq N$, we have $f_{\#}(\pi_1(\bbw,w_0))\leq N$. 
\end{remark}

Let $\bbw\bba$ be the ``archipelago-like" space akin to $\bbd\bba$ in \cite{BFTestMap} where a 2-cell $D^{2}_I$ is attached to $\bbw$ along the loops $\upsilon_I\cdot \lambda_{I}^{-}$ for each $I\in\mci(\mcc)$ (See Figure \ref{waspacefig}). The following theorem is an alternative version of \cite[Theorem 6.1]{BFTestMap} where $\bbd$ is replaced with $\bbw$; we omit the proof since it is completely analogous.

\begin{theorem}
The following are equivalent for any path-connected metric space $X$:
\begin{enumerate}
\item $X$ admits a generalized universal covering,
\item every map $f:\bbw\to X$ such that $f_{\#}(W)=1$ induces the trivial homomorphism on $\pi_1$,
\item every map $f:\bbw\bba\to X$ induces the trivial homomorphism on $\pi_1$.
\end{enumerate}
\end{theorem}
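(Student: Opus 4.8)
The plan is to obtain (1) $\Leftrightarrow$ (2) purely by assembling the characterizations already in hand, and then to establish (2) $\Leftrightarrow$ (3) by a direct cell-attachment argument. For the first equivalence I would specialize every relevant result to the trivial (hence normal) subgroup $H=1$. By Lemma \ref{gencovtopologylemma}, $X$ admits a generalized covering $p$ with $p_{\#}(\pi_1(\hX,\hx))=1$ precisely when $p_1:\tX_1\to X$ has unique path lifting; since unique path lifting forces $(p_1)_{\#}$ to be injective, the equality $p_{\#}(\pi_1)=1$ makes $\hX$ simply connected, so this is exactly the existence of a generalized \emph{universal} covering, i.e. statement (1). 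Next, Theorem \ref{mainthm1} (the $H=1$ case of Theorem \ref{mainthm1general}, available because $X$ is metrizable) converts unique path lifting of $p_1$ into ``$X$ has well-defined transfinite $\Pi_1$-products relative to $1$'', and Proposition \ref{transfinitepathprodchar} rephrases this as ``$1$ is $(W,w_{\infty})$-closed''. Finally, the Remark characterizing $(W,w_{\infty})$-closedness of normal subgroups, applied with $N=1$, says this holds if and only if every $f:(\bbw,w_0)\to(X,x_0)$ with $f_{\#}(W)=1$ has $f_{\#}(\pi_1(\bbw,w_0))=1$, which is verbatim (2). Chaining these gives (1) $\Leftrightarrow$ (2).

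For (2) $\Rightarrow$ (3), I would take an arbitrary $g:\bbw\bba\to X$ and restrict it to $f=g|_{\bbw}$. Because the cell $D^2_I$ is attached along $\upsilon_I\cdot\lambda_I^{-}$, the loop $g\circ(\upsilon_I\cdot\lambda_I^{-})$ bounds a disk and is null-homotopic; conjugating by $f\circ\upsilon|_{[0,a]}$ then shows $f_{\#}(w_I)=1$ for each $I\in\mci(\mcc)$, so $f_{\#}(W)=1$ and (2) yields $f_{\#}=1$. The remaining point is that the inclusion $j:\bbw\to\bbw\bba$ is $\pi_1$-surjective: any loop in $\bbw\bba$ has compact image meeting only finitely many cell interiors and can be homotoped off each closed cell onto its boundary, hence into $\bbw$. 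Consequently $g_{\#}(\pi_1(\bbw\bba))=g_{\#}(j_{\#}(\pi_1(\bbw,w_0)))=f_{\#}(\pi_1(\bbw,w_0))=1$.

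For (3) $\Rightarrow$ (2), I would start from $f:\bbw\to X$ with $f_{\#}(W)=1$. Then each $f\circ(\upsilon_I\cdot\lambda_I^{-})$ is null-homotopic, so $f$ extends across every attached $2$-cell; assembling these extensions produces $\bar f:\bbw\bba\to X$ with $\bar f\circ j=f$. Applying (3) gives $\bar f_{\#}=1$, and hence $f_{\#}=\bar f_{\#}\circ j_{\#}=1$, which is (2).

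The conceptual content is already carried by the cited machinery, so the new work is the topological bookkeeping in (2) $\Leftrightarrow$ (3). The step I expect to demand the most care is the interface with the topology on $\bbw\bba$: verifying that $j_{\#}$ is surjective (that compact loop images meet only finitely many cells and can be pushed off them) and, dually, that the infinitely many disk-extensions in (3) $\Rightarrow$ (2) assemble into a single continuous map. Both are exactly the points that make this proof parallel to---and no harder than---the corresponding argument for $\bbd\bba$ in \cite[Theorem 6.1]{BFTestMap}, which is why the paper can defer to that analogy.
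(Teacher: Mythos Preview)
Your proposal is correct and is precisely the analogy the paper defers to (the paper omits the proof entirely, pointing to \cite[Theorem 6.1]{BFTestMap}); your chain of equivalences for (1) $\Leftrightarrow$ (2) via Theorem \ref{mainthm1}, Proposition \ref{transfinitepathprodchar}, and the density Remark is the intended route, and the extend/restrict argument for (2) $\Leftrightarrow$ (3) is the standard one. One refinement worth noting: the mechanism you name for $j_{\#}$-surjectivity---that a compact loop image meets only finitely many open cells---is the CW-topology argument and generally fails in the metric ``archipelago'' topology where the disks shrink toward $\bbw$; the version that works there is that a loop, being one-dimensional, misses an interior point of every disk, and the punctured space deformation retracts onto $\bbw$. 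You already flag exactly this interface as the delicate step, so this is a sharpening rather than a gap.
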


\begin{figure}[H]
\centering \includegraphics[height=2.4in]{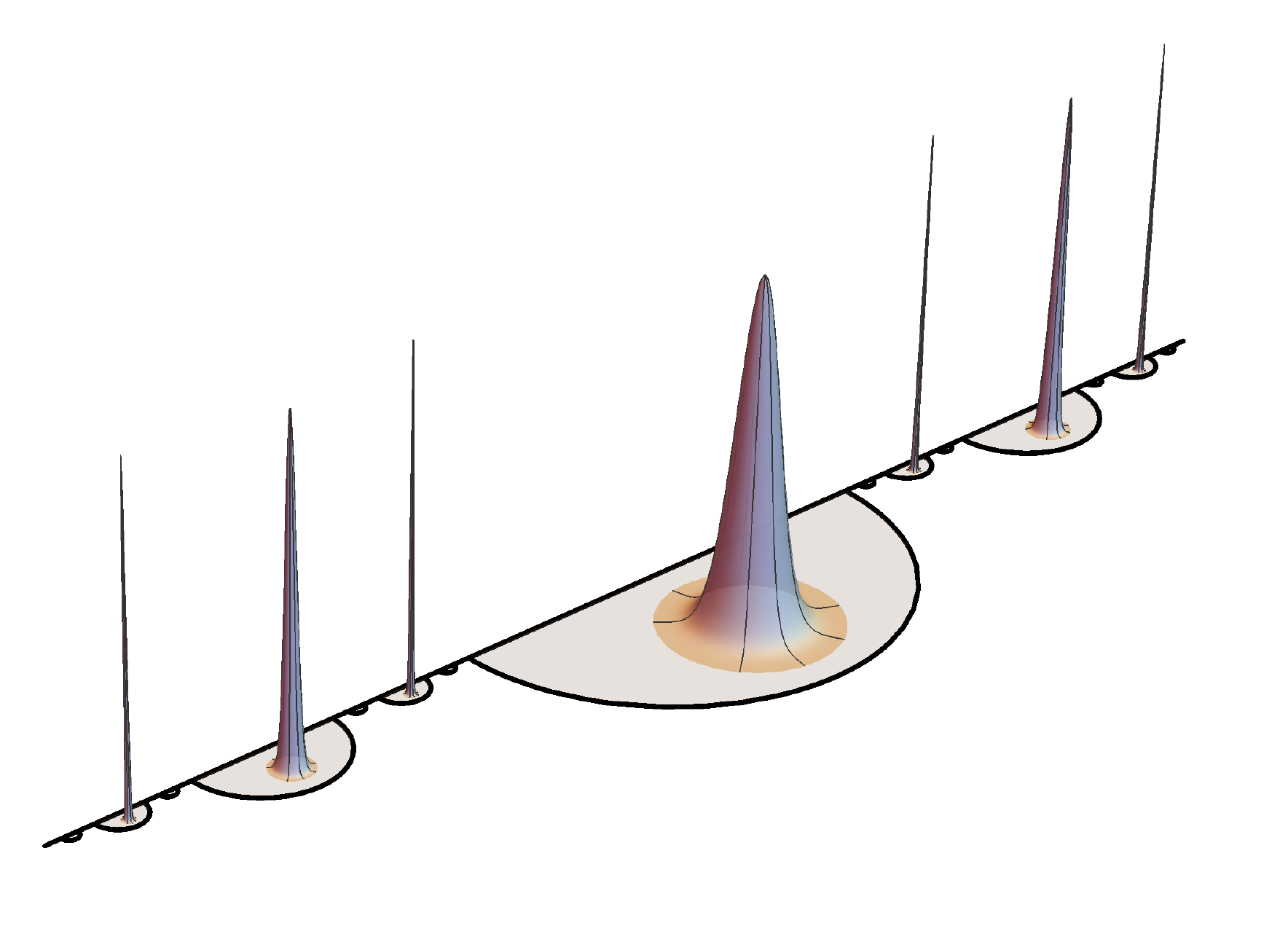}
\caption{\label{waspacefig}The space $\bbw\bba$}
\end{figure}

\begin{remark}
The space used to prove Theorem \ref{counterexamplespace} is highly non-compact. We recall that Eda's Problem asks if every homotopically Hausdorff Peano continuum admits a generalized universal covering. In light of the results in \cite{BrazScattered} and the current paper, we point out that Eda's Problem is equivalent to the following: If a Peano continuum $X$ has well-defined scattered $\Pi_1$-products, must it also have transfinite $\Pi_1$-products?
\end{remark}

\end{document}